\documentclass{amsart}
\usepackage{amsmath, amssymb, amsthm}

\usepackage{epsfig}
\usepackage{graphicx}
\usepackage{color}

\usepackage{pictexwd, dcpic}
 
\theoremstyle{theorem}
\newtheorem{theorem}{Theorem}

\theoremstyle{lemma}
\newtheorem{lemma}{Lemma}[section]

\theoremstyle{prop}
\newtheorem{prop}[lemma]{Proposition}

\theoremstyle{definition}
\newtheorem{definition}[lemma]{Definition}

\theoremstyle{corollary}
\newtheorem{corollary}[lemma]{Corollary}

\theoremstyle{remark}
\newtheorem{remark}[lemma]{Remark}
%\setlength\topmargin{0in}
%\setlength\headheight{1in}
%\setlength\headsep{0in}
%\setlength\textheight{7.7in}
%\setlength\textwidth{6.5in}
%\setlength\oddsidemargin{0in}
%\setlength\evensidemargin{0in}
%\setlength\parindent{0.25in}
%\setlength\parskip{0.25in} 
%\addtolength{\headsep}{2cm}

\newcommand{\A}{\mathbb{A}}
\newcommand{\AF}{\mathcal{A}}
\newcommand{\Z}{\mathbb{Z}}
\newcommand{\B}{\mathcal{B}}
\newcommand{\X}{\mathcal{X}}
\newcommand{\Y}{\mathcal{Y}}
\newcommand{\R}{\mathbb{R}}

\newcommand{\F}{\mathcal{F}}
\newcommand{\PR}{\mathbb{P}}
\newcommand{\h}{h}
\newcommand{\Aut}{\text{Aut}}

\begin{document}

\title{Automorphisms of Products of Drinfeld Half Planes}
\author{Gil Alon}
%\email{gil.alon@gmail.com}
\address{Department of Mathematics and Computer Science, The Open University of Israel, P.O. Box 808, Raanana, Israel}
%\classification{14G22}

\begin{abstract}
The Drinfeld upper half-planes play the role of symmetric spaces in the $p$-adic analytic world. We find the automorphism group of a product of such spaces, where each may be defined over a different field. We deduce a rigidity theorem for quotients of such products by discrete and torsion free groups. 
\end{abstract}
\maketitle
% XXXXXXXXXXXXXXXXXXXXXXXXXXXXXXXXXXXXXXXXXXXXXXXXXXXXXXXXXXXXXXXXXXXXXXXXXXXXXXXXXXXXXXX
\sloppy

\section{Introduction}
A theorem of E. Cartan (\cite{helg}, IV 3.2 and II 2.6) states that the isometry group of a real symmetric space has a canonical structure of a Lie group. Over a non-Archimedean field, the only known analogs of symmetric spaces are the Drinfeld symmetric spaces and their products. In analogy to bounded symmetric domains, products of Drinfeld spaces provide non-Archimedean uniformization for Shimura varieties, as shown by Varshavsky and Rapoport-Zink (see \cite{rap}, \cite{var}). The purpose of this paper is to determine the group of analytic automorphisms of such a product. 

We let $l$ be a non-Archimedean local field, $K$ a non-Archimedean field containing $l$, and $k_1,..,k_r$ finite extensions of $l$, which are contained in $K$. We will consider the $K$-analytic space (in the sense of Berkovich, see \cite{ber2}, \cite{ber4}) $\prod_{i=1}^r\Omega_{k_i,K}^{d_i}$, where $\Omega_{k,K}^d=\Omega_k^d\hat{\otimes}K$, and $\Omega_k^d$ is the Drinfeld space of dimension $d$ over $k$, defined as the open subset of the projective space $\mathbb{P}^{d}_k$ obtained by removing all the $k$-rational hyperplanes. The main result of the paper is the following theorem:

\begin{theorem} \label{T}
Let $M$ be the permutation group $\{\sigma \in S_r: k_{\sigma(i)}=k_i \text{ and } d_{\sigma(i)}=d_i \text{ for } i=1,..,r\}$. Then we have
$$ \Aut_K(\prod_{i=1}^r\Omega_{k_i,K}^{d_i})=M \ltimes \prod_{i=1}^rPGL_{d_i+1}(k_i). $$
\end{theorem}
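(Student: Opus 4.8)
The plan is to prove the theorem in three steps, after fixing the notation $X=\prod_{i=1}^r\Omega_{k_i,K}^{d_i}$. The inclusion ``$\supseteq$'' is routine: $PGL_{d_i+1}(k_i)$ acts $k_i$--linearly on $\mathbb P^{d_i}_{k_i}$ preserving the family of $k_i$--rational hyperplanes, hence acts on $\Omega_{k_i}^{d_i}$ and, after base change $\hat\otimes_{k_i}K$, on $\Omega_{k_i,K}^{d_i}$; the group $M$ acts by permuting coordinates of $X$ (well defined because a $\sigma\in M$ only moves a factor to an identical one); and one checks the resulting homomorphism $M\ltimes\prod PGL_{d_i+1}(k_i)\to\Aut_K(X)$ is injective. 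The substance is the reverse inclusion, which I would build on two statements: (a) the single--factor case $\Aut_K(\Omega_{k,K}^d)=PGL_{d+1}(k)$, together with the rigidity assertion that an isomorphism $\Omega_{k,K}^d\cong\Omega_{k',K}^{d'}$ forces $k=k'$ as subfields of $K$ and $d=d'$; and (b) that every $\varphi\in\Aut_K(X)$ carries the $i$--th coordinate foliation of $X$ to the $\sigma(i)$--th one for some $\sigma\in S_r$.

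Granting (a) and (b) the theorem follows quickly. A slice $\{x_j=a_j:j\neq i\}$ is isomorphic to $\Omega_{k_i,K}^{d_i}$ (via the $i$--th projection), and (b) says $\varphi$ sends it isomorphically onto a parallel slice; so $\Omega_{k_i,K}^{d_i}\cong\Omega_{k_{\sigma(i)},K}^{d_{\sigma(i)}}$, and the last clause of (a) forces $k_{\sigma(i)}=k_i$, $d_{\sigma(i)}=d_i$, i.e.\ $\sigma\in M$. Composing $\varphi$ with $\sigma^{-1}$ we may assume $\sigma=\mathrm{id}$. Then, identifying the two parallel slices with $\Omega_{k_i,K}^{d_i}$ through the $i$--th projection, $\varphi$ yields an automorphism $\psi_a\in\Aut_K(\Omega_{k_i,K}^{d_i})=PGL_{d_i+1}(k_i)$ depending on $a=(a_j)_{j\neq i}$; this dependence is a morphism from the connected space $\prod_{j\neq i}\Omega_{k_j,K}^{d_j}$ to the subgroup $PGL_{d_i+1}(k_i)$ of $PGL_{d_i+1}(K)$, which is totally disconnected there, hence the morphism is constant. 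Thus the $i$--th component of $\varphi$ depends only on $x_i$; as this holds for every $i$, $\varphi=\prod_i\psi_i$ with $\psi_i\in PGL_{d_i+1}(k_i)$ by (a).

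For (a) --- which I would establish first, or simply cite if available --- the point is that the family of $k$--rational hyperplanes is intrinsic to the analytic space $\Omega_{k,K}^d$: it is recorded by $\mathcal O^{\ast}(\Omega_{k,K}^d)/K^{\ast}$ through the linear fractions $\ell_H/\ell_{H'}$, which can be singled out intrinsically (for instance through the behaviour of the real functions $\log|\cdot|$ along the skeleton, or through the divisors of meromorphic extensions along the boundary); an automorphism therefore permutes these hyperplanes preserving incidence, the fundamental theorem of projective geometry (for $d\geq2$) or a cross--ratio argument (for $d=1$) exhibits it as a semilinear map of $\mathbb P^d$, and $K$--linearity on functions removes the field--automorphism part, giving the rigidity statement as well. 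The heart of the matter is (b), and I expect it to be the main obstacle. I would use the canonical retraction of $\Omega_{k_i,K}^{d_i}$ onto a copy of the geometric realization of the Bruhat--Tits building $\mathcal B_i$ of $PGL_{d_i+1}(k_i)$; these multiply to a retraction $X\to\prod_i\mathcal B_i$ whose image is intrinsic to $X$ (a ``polysimplicial core'', characterizable e.g.\ by the absence of polydisc neighbourhoods), so $\varphi$ descends to an automorphism of $\prod_i\mathcal B_i$ as a metric polysimplicial complex. Each $\mathcal B_i$ being irreducible, the de Rham--type uniqueness of the decomposition of a $\mathrm{CAT}(0)$ space (equivalently, of a building) into irreducible factors shows this automorphism permutes the $\mathcal B_i$ and is a product of isometries between them; functoriality of the retraction then yields (b). The genuine difficulty is to pin down this retraction and its functoriality for $K$--analytic automorphisms when $K$ is large and possibly not discretely valued: one must produce a truly intrinsic description of the core, and verify that automorphisms preserve it and act affinely on it. A more hands--on alternative, bypassing buildings, would be a Rosenlicht--type identity $\mathcal O^{\ast}(X)=K^{\ast}\times\prod_i\mathcal O^{\ast}(\Omega_{k_i,K}^{d_i})$ from which the coordinate foliations can be read off directly; but proving this for these non--Stein spaces looks no easier than the building argument.
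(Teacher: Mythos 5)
Your reduction of the theorem to the two inputs (a) and (b) is clean, and the slice argument that deduces the theorem from them (constancy of $a\mapsto\psi_a$ because $PGL_{d_i+1}(k_i)$ is totally disconnected) is essentially sound. But both inputs are left as sketches, and for (b) --- which you correctly identify as the heart of the matter --- the route you propose has a genuine gap. The building argument, carried out carefully (the paper does exactly this: it shows $\phi(\B)=\B$, that the induced map is polysimplicial and commutes with the retraction $\tau$, and that it permutes the factors $\B_i$ via a lemma on automorphisms of products of complete graphs, which plays the role of your de Rham decomposition), only controls $\tau\circ\phi$. The fibers of $\tau$ are large: $\tau^{-1}(b_1,\dots,b_r)=\prod_i\tau_i^{-1}(b_i)$, so knowing that $\bar\phi$ sends $\B_i\times\{pt\}$ to $\B_{\sigma(i)}\times\{pt'\}$ only places $\phi(\text{slice})$ inside $\X_{\sigma(i)}\times\prod_{j\neq\sigma(i)}\tau_j^{-1}(pt'_j)$, which is far bigger than a slice. ``Functoriality of the retraction'' therefore does not yield (b); some analytic input on the fibers is indispensable. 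The paper supplies it by a quite different endgame: after normalizing $\phi$ so that it fixes an apartment $\Lambda$ (pointwise up to a factor permutation) and proving commutation with the projection $\tau_\Lambda$ onto that apartment, it computes the functions $t_{i,j}\circ\phi^{-1}$ directly, using that $|t_{i,j}\circ\phi^{-1}|$ transforms like a character under the diagonal torus, that bounded analytic functions on $\X$ are constant, and a power-series expansion on the multi-annulus $\tau^{-1}(\Delta^\circ)$. The product structure of $\phi$ (your (b)) comes out only at the very end, a posteriori.

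Two further points. First, the normalization ``fix an apartment pointwise'' is itself obstructed: there are simplicial automorphisms of $\B_i$ (the lattice-duality involution) not induced by $G_i$ and not label preserving --- this is precisely the gap the paper identifies in the published proof of the $r=1$ case --- and when the $k_i$ differ one cannot naively permute the factors of $\B$; the paper handles this with a labelling/counting argument (Gaussian binomials) and by embedding $\B$ into the building over the compositum of the $k_i$. Your proposal never confronts the label-reversing automorphisms; in your scheme they would have to be excluded inside (a), whose proof you only sketch. Second, your intrinsic-hyperplane proof of (a) hinges on the unproved assertion that the linear fractional functions $\ell_H/\ell_{H'}$ can be singled out inside $\mathcal O^*(\Omega)/K^*$; that is the entire difficulty of the $r=1$ theorem, so as written (a) is being assumed rather than proved. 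The overall architecture is reasonable, but as it stands the proposal defers the two hard steps and the proposed bridge for (b) does not close.
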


The case $r=1$ of theorem \ref{T} was proved in \cite{ber} and also in \cite{ikato} in the zero characteristic case. 

The proof in  \cite{ber} and its extension here to the case of a product demonstrate the power and usefulness of Berkovich spaces. For example, there is a natural,   $PGL_{d+1}(k)$-equivariant surjective map from the Drinfeld symmetric space $\Omega_{k,K}^{d}$ to the Bruhat-Tits building of $SL_{d+1}/k$. This well known map exists in the rigid analytic sense as well. But in the framework of Berkovich spaces, this map has a right inverse: The building is naturally embedded in the symmetric space. The points of the building (which is a Euclidean object) inside the $p$-adic symmetric space do not exist in the rigid analytic sense, but exist in the Berkovich sense. This property is extremely helpful in finding the automorphism group of the symmetric space.

Let us overview the proof of theorem \ref{T}, and start by recalling the strategy of the proof for the case $r=1$, in \cite{ber}: Given an automorphism $\phi$ of $\X:= \Omega_{k,K}^d$, it is shown that the Bruhat-Tits building $\B$ of $SL_{d+1}/k$ (viewed as a subset of $\X$) is preserved by $\phi$, and moreover, that $\phi$ induces a simplicial autopmorphism of $\B$. Then, $\phi$ is composed with an element $g$ of $G:=PGL_{d+1}(k)$, such that $\phi':=g\phi$ fixes an apartment $\Lambda$ of $\B$ pointwise. It is then shown that $\X$ can be projected onto $\Lambda$, and that $\phi'$ commutes with this projection, hence $\phi'$ preserves the fibers of the projection.  This, together with a lemma on bounded holomorphic functions on $\X$, enables to determine the automorphism $\phi$. 

Let us remark that there is a gap in the above proof. There are simplicial automorphisms of $\B$ whose composition with any element $G$ does not fix any chamber $\B$ pointwise (see section \ref{labelling}). Hence, we can not assume that $\phi$ fixes an apartment pointwise. In the proof presented here, we will overcome this problem by indroducing a labelling on the vertices of $\B$ and showing that an automorphism of $\B$ is either \emph{label preserving}, in which case its restriction to an apartment is induced by an element of $G$, or \emph{label reversing}, in which case it can be composed with a certain involution of $\B$ which we construct, and then it is made label preserving. Then, at the last stage of the proof we will show that in fact the second possibility can not occur. 

Let us now describe our proof strategy in the case where $r>1$ and all the fields $k_i$ are equal. We embed the product of the corresponding Bruhat-Tits buildings (which we still call $\B$) in our space. We show, as before, that $\phi$ preserves $\B$ and analyze the action of $\phi$ on a chamber of $\B$. A key ingredient in this analysis is the labelling of the building, and a counting argument which determines its interaction with $\phi$. From this analysis it follows that after a permutation on the coordinates, and composing with the above involutions in some of the coordinates, an apartment is fixed pointwise. Then, as before, we prove that $\phi$ acts on the fibers of the projection to the apartment. 

In the general case, the fields $k_i$ may not all be equal, and therefore we are not allowed to permute coodinates with different fields. We remedy this by embedding our building in a larger building, in which these permutations are allowed.

As a result of our main theorem, we prove the following rigidity theorem for quotients of such products by groups of automorphisms:
\begin{theorem} \label{R} If $k'_1,..,k'_s$ are finite extensions of $l$ which are contained in $K$, and $\Gamma_1$ and $\Gamma_2$ are discrete and torsion free subgroups of $\prod_i PGL_{d_i+1}(k_i)$ and $\prod_j PGL_{d'_j+1}(k'_j)$ respectively, such that the quotients $\Gamma_1 \setminus (\prod_{i=1}^r\Omega_{k_i,K}^{d_i})$ and  $\Gamma_2 \setminus (\prod_{i=1}^s\Omega_{k'_i,K}^{d'_i})$ are isomorphic as $K$-analytic spaces, then $r=s$, and there exists a permutation $\sigma \in S_r$ such that $k_i=k'_{\sigma(i)}$ and $d_i=d'_{\sigma(i)}$ for all $i$, and $\Gamma_1$ and $\{(g_{\sigma(1)},..,g_{\sigma(r)}):(g_1,..,g_r)\in \Gamma_2\}$ are conjugate in $\prod_{i=1}^rPGL_{d_i+1}(k_i)$. \end{theorem}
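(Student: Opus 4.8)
\medskip
\noindent\emph{Proof plan for Theorem \ref{R}.} The plan is to pass to universal covers and then feed the resulting isomorphism into Theorem \ref{T}. Write $X=\prod_{i=1}^r\Omega_{k_i,K}^{d_i}$ and $Y=\prod_{j=1}^s\Omega_{k'_j,K}^{d'_j}$. Each Drinfeld space is simply connected as a $K$-analytic space (a fact underlying the non-archimedean uniformization results mentioned above), hence so are $X$ and $Y$. Since $\Gamma_1$ is discrete and torsion free it acts freely and properly discontinuously on $X$ (as is standard for torsion-free discrete subgroups of $\prod_iPGL_{d_i+1}(k_i)$), so $p_X\colon X\to\Gamma_1\backslash X$ is a universal covering and $\Gamma_1$ is exactly its group of deck transformations; likewise for $p_Y\colon Y\to\Gamma_2\backslash Y$.

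First I would lift the given isomorphism $f\colon\Gamma_1\backslash X\xrightarrow{\ \sim\ }\Gamma_2\backslash Y$. As $X$ is simply connected, $f\circ p_X$ lifts through the covering $p_Y$ to a morphism $\tilde f\colon X\to Y$ with $p_Y\circ\tilde f=f\circ p_X$; lifting $f^{-1}$ the same way and composing shows $\tilde f$ is an isomorphism. A routine diagram chase using $p_Y\circ\tilde f=f\circ p_X$ then shows $\tilde f\gamma\tilde f^{-1}\in\Gamma_2$ for every $\gamma\in\Gamma_1$ and that $\gamma\mapsto\tilde f\gamma\tilde f^{-1}$ is an isomorphism $\Gamma_1\xrightarrow{\ \sim\ }\Gamma_2$ (this uses that $\Gamma_2$ is the \emph{full} deck group of $p_Y$, i.e. that $Y$ is the universal cover).

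Now $\tilde f$ is an isomorphism $X\xrightarrow{\ \sim\ }Y$, so I would invoke Theorem \ref{T}. The claim I need is that an isomorphism between two such products can exist only when the multisets $\{(k_i,d_i)\}_{i=1}^r$ and $\{(k'_j,d'_j)\}_{j=1}^s$ coincide, and that then $\mathrm{Isom}_K(X,Y)$ is a single orbit for post-composition by $\Aut_K(Y)$. Granting this, $r=s$ and there is $\sigma_0\in S_r$ with $k_i=k'_{\sigma_0(i)}$, $d_i=d'_{\sigma_0(i)}$; re-indexing the factors of $Y$ by $\sigma_0$ we may assume $Y=X$ on the nose, so $\tilde f\in\Aut_K(X)=M\ltimes\prod_{i=1}^rPGL_{d_i+1}(k_i)$, say $\tilde f=\tau\cdot(g_1,\dots,g_r)$ with $\tau\in M$, $g_i\in PGL_{d_i+1}(k_i)$. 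Both $\Gamma_1$ and (the reindexed) $\Gamma_2$ lie in $\prod_iPGL_{d_i+1}(k_i)$, and $\Gamma_2=\tau\,(g)\,\Gamma_1\,(g)^{-1}\,\tau^{-1}$. Since $\tau$ only permutes coordinates with equal $(k_i,d_i)$, conjugation by $\tau$ amounts to reindexing, so after undoing it we find that $\Gamma_1$ is conjugate, by the element $(g_1,\dots,g_r)\in\prod_iPGL_{d_i+1}(k_i)$, to the $\tau$-reindexing of the $\sigma_0$-reindexing of $\Gamma_2$. Taking $\sigma$ to be the corresponding composite of $\sigma_0$ and $\tau$ — which still matches fields and dimensions, as $\tau$ preserves them — yields the statement of Theorem \ref{R}.

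The hard part is the claim invoked in the last paragraph: that the $K$-analytic space $X$ determines the multiset of pairs $(k_i,d_i)$, with the $k_i$ understood as \emph{subfields} of $K$ rather than only up to abstract isomorphism, and that $\mathrm{Isom}_K(X,Y)$ is then an $\Aut_K(Y)$-torsor. My plan for this is to run the initial part of the proof of Theorem \ref{T} in the relative setting: any isomorphism $X\to Y$ carries the canonically embedded building $\B_X=\prod_i\B_{SL_{d_i+1}/k_i}$ isomorphically onto $\B_Y$, and uniqueness of the decomposition of a Euclidean building into irreducible factors gives $r=s$ together with a permutation matching the factors, hence the $d_i$ and the residue cardinalities of the $k_i$. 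Promoting this to $k_i=k'_{\sigma_0(i)}$ as subfields of $K$ must then be read off from the $K$-analytic structure itself — for instance from the way $\Aut_K(\Omega^d_{k,K})=PGL_{d+1}(k)$ sits inside $PGL_{d+1}(K)$ and pins down $k\subseteq K$, transported through $\tilde f$ — and this is the step I expect to require the most care.
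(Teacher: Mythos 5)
Your covering-space architecture is the same as the paper's: both proofs use that the product of Drinfeld spaces is simply connected (the paper cites contractibility from Berkovich), that a discrete torsion-free $\Gamma_i$ acts freely and properly so that the quotient map is a universal covering, and then lift the isomorphism of quotients to an isomorphism $\tilde f\colon X\to Y$ conjugating $\Gamma_1$ to $\Gamma_2$. (The paper justifies the covering-space formalism in the analytic category by citing specific lemmas of Berkovich's paper rather than treating it as standard, but this is not a substantive difference.)

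The genuine gap is exactly the step you flag as "the hard part": that $X\cong Y$ forces $r=s$, $d_i=d'_{\sigma_0(i)}$, and $k_i=k'_{\sigma_0(i)}$ \emph{as subfields of $K$}. Your sketched route — match the irreducible factors of the embedded buildings — cannot succeed as stated, because the building $B^d_k$ depends only on $d$ and the residue cardinality of $k$: two distinct totally ramified extensions of $l$ of the same degree inside $K$ produce isomorphic buildings, so no amount of building combinatorics will pin down $k_i$ as a subfield. You correctly sense that the fields must be read off from the analytic structure, but you leave that step open, and it is precisely the content of the theorem at this point. The paper closes the gap with a short doubling trick (its Theorem \ref{S}): given an isomorphism $\phi\colon X\to Y$, the map $\psi(x,y)=(\phi^{-1}(y),\phi(x))$ is an automorphism of $X\times Y$, which is again a product of Drinfeld spaces, so Theorem \ref{T} applies directly; its explicit conclusion (a permutation of factors that can only interchange factors with equal $(k_i,d_i)$, composed with elements of $PGL_{d_i+1}(k_i)$) immediately yields the matching of the data and that $\tilde f$ itself has the required form. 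Substituting this argument for your building-decomposition plan would complete your proof; the remainder of your write-up (reindexing by $\sigma_0$ and $\tau$, conjugating by $(g_1,\dots,g_r)$) then goes through as you describe.
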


% XXXXXXXXXXXXXXXXXXXXXXXXXX  The Drinfeld space as a Berkovich space  XXXXXXXXXXXXXXXXXXXXXXX

\section{The Drinfeld space as a Berkovich space} \label{sec:DrSp}

Let $k$ be a non-Archimedean local field, $K$ a non-Archimedean field containing $k$, and $d$ be a natural number.

The affine space $\A^{d}_{K}$ is the Berkovich space whose points are the multiplicative seminorms on the polynomial ring $K[t_1,..,t_{d}]$ which extend the given norm on $K$. It is covered by affinoids of the form 

$$B(d,r)=M(K\{r^{-1}t_1,..,r^{-1}t_{d}\})$$ 

(See \cite{ber2} for the definitions of $M(\mathcal A)$, the spectrum of an affinoid algebra $\mathcal A$, and of the affinoid algebras of the form $K\{r_1^{-1}t_1,..,r_d^{-1}t_d\}$). The affinoids $B(d,r)$ form a net on $\A^{d}_{K}$ (as defined in \cite{ber4}), and thereby determine its analytic structure. 

The projective space $\PR^{d}_K$ is the Berkovich space whose points are multiplicative seminorms $\rho$ on the polynomial ring $K[T_0,..,T_{d}]$, which extend the given norm on $K$, such that $\rho(T_i)\neq 0$ for at least one $i$, modulo the following equivalence relation: two such multiplicative seminorms $\rho_1, \rho_2$ are equivalent if and only if there exists a positive real number $c$ such that for any homogenous polynomial $P\in K[T_0,..,T_{d}]$ of degree $e$, we have $\rho_1(P)=c^e \rho_2(P)$. Indeed, it is an easy exercise that this space is covered by the affine pieces defined by the conditions $T_i \neq 0$ for $i=0,..,d$.

For any $a=(\alpha_0,..,\alpha_{d})\in k^{d+1} \setminus \{0 \}$, let $H_a$ be the subset of $\mathbb P^{d}_K$ defined by the condition $\sum_{i=0}^{d}{\alpha_i T_i}=0$. In terms of the multiplicative seminorms involved, it is the set $$H_a= \big \{[\rho] \in \PR^{d}_K \,| \,   \rho(\sum_{i=0}^{d}{\alpha_i T_i})=0 \big \}$$

Let us now define the Drinfeld space $\Omega^{d}_{k,K}$. As a set, it is
$$  \Omega^{d}_{k,K} = \PR^{d}_K \setminus \bigcup_{a\in k^{d+1} \setminus \{0 \}}H_a $$ 

We will prove that $\Omega^{d}_{k,K}$ is an open subset of  $\PR^{d}_K$. This will automatically endow  $\Omega^{d}_{k,K}$ with the structure of a Berkovich space.

Note that $$  \Omega^{d}_{k,K} \subseteq \PR^{d}_K \setminus H_{(1,0,..,0)}\cong \A^{d}_K$$ Therefore, we can view $  \Omega^{d}_{k,K}$ as a subspace of $\A^{d}_K$. We will use this point of view whenever it is convenient, via the affine coordinates $t_i=\frac{T_i}{T_0}$ for $i=1,..,d$. It is also convenient to set $t_0=1$.

Let us define two coverings of $\Omega^{d}_{k,K}$ by increasing sequences of sets, the first by open sets and the second by affinoids. These coverings were introduced by Schneider and Stuhler in \cite{ss} in the rigid analytic setting. We let $\pi$ be a uniformizer of $k$, and define for any $n\geq 1$,

\begin{equation*}
\begin{split}
\Omega^{d}_{k,K}(n)= \Big \{   [\rho]\in \PR^d_K \, | \, & \rho (\sum_{i=0}^{d}\alpha_i T_i  ) > |\pi|^n   \max_i {|\alpha_i|}
\max_i {\rho(T_i)}  \\
 & \text{ for all } (\alpha_0,..,\alpha_{d})\in k^{d+1} \setminus \{0\} \Big \} 
\end{split}
\end{equation*}

and

\begin{equation*}
\begin{split}
\Omega^{d}_{k,K}[n]=\Big \{   [\rho]\in \PR^d_K \, | \, & \rho(\sum_{i=0}^{d}\alpha_i T_i) \geq |\pi|^n   \max_i {|\alpha_i|}
\max_i {\rho(T_i)}  \\
 & \text{ for all } (\alpha_0,..,\alpha_{d})\in k^{d+1} \setminus \{0\}\Big \} 
\end{split}
\end{equation*}

\begin{prop} \label{open} 
\begin{enumerate}
\item For all $n$, $\Omega^{d}_{k,K}[n]$ is a strictly affinoid domain in $\PR^{d}_K$.
\item For all $n$, $\Omega^{d}_{k,K}(n)$ is an open subset of $\PR^{d}_K$.
\item $\Omega^{d}_{k,K} = \bigcup_{n \geq 1} \Omega^{d}_{k,K}(n) =  \bigcup_{n \geq 1} \Omega^{d}_{k,K}[n]$.
\item $\Omega^{d}_{k,K}$ is an open subset of $\PR^{d}_K$.
\end{enumerate}
\end{prop}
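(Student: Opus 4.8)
My plan is to prove the four statements in the order given, since each feeds into the next. For (1), I would work affine-by-affine: cover $\PR^d_K$ by the $d+1$ standard charts $\{T_i\neq 0\}\cong\A^d_K$, and on each chart express the defining conditions of $\Omega^d_{k,K}[n]$ as finitely many inequalities on norms of linear forms. The key observation is that although the condition ``$\rho(\sum\alpha_iT_i)\geq|\pi|^n\max_i|\alpha_i|\max_i\rho(T_i)$ for all $(\alpha_i)\in k^{d+1}\setminus\{0\}$'' quantifies over infinitely many tuples, it suffices to check it on a finite set of representatives: scaling $(\alpha_i)$ by $k^\times$ does not change the condition, and by an approximation/compactness argument (or directly, since the ``worst'' linear forms modulo scaling form a compact set and the inequality is closed) one reduces to tuples with $\max_i|\alpha_i|=1$, whose reductions modulo $\pi^{n+1}$ — a finite set — determine everything. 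This exhibits $\Omega^d_{k,K}[n]$ inside each chart as a Weierstrass/Laurent-type domain cut out by finitely many inequalities $|f|\leq|g|$ with $f,g$ polynomials, hence a strictly affinoid domain; gluing over the finite cover gives the global statement. I expect this reduction from infinitely many to finitely many linear forms to be the main obstacle — one must be careful that the inequality is genuinely preserved under the relevant limit, using that the set of lines in $k^{d+1}$ is compact and the valuation takes discrete values on $k$.

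For (2), the argument is the same but easier: the strict inequalities defining $\Omega^d_{k,K}(n)$ are open conditions, and again only finitely many linear forms need to be checked (here one can even be slightly generous and replace $\Omega^d_{k,K}(n)$ by the interior of $\Omega^d_{k,K}[n]$, or note directly that $\Omega^d_{k,K}(n)$ is a finite intersection of sets of the form $\{\rho(f)>c\,\rho(h)\}$ which are open since $\rho\mapsto\rho(f)$ is continuous). So each $\Omega^d_{k,K}(n)$ is open in $\PR^d_K$.

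For (3), I would prove the chain of inclusions $\Omega^d_{k,K}(n)\subseteq\Omega^d_{k,K}[n]\subseteq\Omega^d_{k,K}(n+1)$, which is immediate from $|\pi|^{n}>|\pi|^{n+1}$ and the fact that a seminorm satisfying the closed inequality with exponent $n$ satisfies the strict one with exponent $n+1$; this already shows the two unions coincide. It remains to show their common union is all of $\Omega^d_{k,K}$. The inclusion $\subseteq$ is clear since each $\Omega^d_{k,K}[n]$ avoids every $H_a$ (on $H_a$ the left side vanishes while the right side is positive). For $\supseteq$, take $[\rho]\in\Omega^d_{k,K}$; normalize so that $\max_i\rho(T_i)=1$. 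For each line in $k^{d+1}$ the quantity $\rho(\sum\alpha_iT_i)/\max_i|\alpha_i|$ is well-defined and strictly positive (as $[\rho]\notin H_a$), and — using compactness of $\PR^d(k)$ and continuity — it attains a positive minimum $m>0$; choosing $n$ with $|\pi|^n<m$ puts $[\rho]\in\Omega^d_{k,K}(n)$. Finally, (4) is then immediate: $\Omega^d_{k,K}=\bigcup_n\Omega^d_{k,K}(n)$ is a union of open subsets of $\PR^d_K$ by (2) and (3), hence open, which endows it with its Berkovich structure as claimed.
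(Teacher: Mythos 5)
Your items (2)--(4) are essentially the paper's own argument: the reduction from infinitely many linear forms to finitely many via unimodular representatives and congruence modulo a power of $\pi$ (the paper invokes Lemma 1.2 of Schneider--Stuhler for this), the openness of each strict condition, and the compactness-of-unimodular-vectors argument producing the positive lower bound in (3) all match; your nested inclusions $\Omega^{d}_{k,K}(n)\subseteq\Omega^{d}_{k,K}[n]\subseteq\Omega^{d}_{k,K}(n+1)$ are a clean way to see the two unions agree.

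The gap is in part (1), in the step from ``finitely many inequalities $|f|\le|g|$ on a chart'' to ``strictly affinoid domain,'' and then in the gluing. First, a Laurent-type locus $\{\,|f|\le|g|\,\}$ inside a chart $\{T_i\neq 0\}\cong\A^{d}_K$ is not automatically affinoid, because $\A^{d}_K$ is not affinoid and such a locus can be unbounded (e.g.\ $\{\,|t_1|\le|t_2|\,\}$ in $\A^2_K$); Laurent domains are affinoid only when cut out inside an affinoid. One must first confine $\Omega^{d}_{k,K}[n]$ to a compact region: taking $a$ to be the standard unit vectors in the defining condition forces $|\pi|^{n}\le|t_i(x)|\le|\pi|^{-n}$ for all $i$, so $\Omega^{d}_{k,K}[n]$ lies in the strictly affinoid polyannulus $A(n)$ inside the single chart $T_0\neq 0$, and is then a Laurent domain in $A(n)$. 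Second, even if each chart-intersection were affinoid, ``gluing over the finite cover'' is not a licit closing move: a finite union of affinoid domains in $\PR^{d}_K$ is in general only a compact analytic domain, not an affinoid one ($\PR^1_K$ itself is the union of two closed discs). Both problems disappear once you record the containment in $A(n)$, which puts the whole set in one chart and makes the gluing unnecessary; this is exactly how the paper proceeds.
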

\begin{proof}
\begin{enumerate}
\item
For a vector $a=(\alpha_0, .., \alpha_d) \in k^{d+1}$, let
$$H_n(a)=\Big \{ [\rho] \in \PR^d_K \, | \, \rho(\sum_{i=0}^{d}\alpha_i T_i) \geq |\pi|^n   \max_i {|\alpha_i|}\max_i {\rho(T_i)} \Big \} $$ 
Then $\Omega^d_{k,K}[n]=\bigcap_{a\in k^{d+1}\setminus \{0 \}} H_n(a)$.
Setting $a$ to be the unit vector $(1,0,..,0)$ we see that for any $x=[\rho]\in \Omega^d_{k,K}[n]$, $\rho(T_0) \geq |\pi^n|\rho(T_i)$ for all $1 \leq i \leq d$. Setting $a$ to be the $i^\text{th}$ unit vector, we see that $\rho(T_i) \geq |\pi^n| \rho(T_0)$. We conclude that $\Omega^d_{k,K}[n]$ is contained in the multi-annulus
$$A(n):=\Big \{ x \in \A^d_K \, | \,  |\pi|^n \leq |t_i(x)| \leq |\pi|^{-n} \text{ for all } i\Big \} $$

Let us call a vector $a=(\alpha_0,..,\alpha_d)$ \emph{unimodular} if $\max_i |\alpha_i|=1$. 
Since $H_n(a)=H_n(ca)$ for all scalars $c\in k \setminus \{0 \}$, it is enough to take unimodular vectors $a$ in the above intersection. Moreover, by lemma 1.2 in \cite{ss}, if two unimodular vectors $a$ and $b$ are equal modulo $\pi^n$, then $H_n(a)=H_n(b)$. Hence, $\Omega^d_{k,K}[n]$ is a \emph{finite} intersection $H(a_1) \cap .. \cap H(a_m)$. 
We can replace the condition in the definition of $H(a)$ by the conditions $\rho(\sum_{i=0}^{d}\alpha_i T_i) \geq |\pi|^n   (\max_i {|\alpha_i|} ){\rho(T_j)}$ for $j=0,..,d$. Hence, $\Omega^d_{k,K}[n]$ is a Laurent domain (see \cite{ber2}, remarks 2.2.2) in $A(n)$, defined by a finite set of conditions of the form $|(\sum_{i=0}^d \alpha_i t_i)/t_j| \geq |\pi^n|$. In particular, it is a strictly affinoid domain.
\item
Similarly to the previous item, we can replace the infinite set of conditions in the definition of $\Omega^{d}_{k,K}(n)$ by a finite one. Hence, $\Omega^{d}_{k,K}(n)$ is defined in  $\PR^d_K$  by a finite set of conditions of the form $\rho(\sum_i \alpha_i T_i) > c \rho(T_j)$. Hence, it is open.
\item
Let $[\rho] \in \Omega^d_{k,K}$, and let $U$ be the set of unimodular vectors in $k^{d+1}$. Since $U$ is compact and the function $$f((\alpha_0,..,\alpha_d))=\frac{\rho(\sum_{i=0}^{d}\alpha_i T_i)}{\max_i {|\alpha_i|}\max_i {\rho(T_i)}}$$ is continuous and positive on $U$, there is a positive lower bound on the values of $f$. Choosing $n$ such that $|\pi^n|$ is less than the lower bound, we have $[\rho] \in \Omega^{d}_{k,K}(n) \subseteq \Omega^{d}_{k,K}[n]$.
\item
This follows immediately from the previous two claims. \qedhere
\end{enumerate}\end{proof}

\begin{remark}
It follows from proposition \ref{open} that the above description of the Drinfeld space is compatible with its description elsewhere in the literature. In fact, under the fully faithful functor from Hausdorff strictly $K$-analytic Berkovich spaces to quasiseparated rigid analytic spaces (\cite[Theorem 1.6.1]{ber4}), the space $\Omega^{d}_{k,K}$ corresponds to the rigid analytic Drinfeld space as defined in \cite{ss}, since the covering by $\Omega^{d}_{k,K}(n)$ maps to the admissible affinoid covering defined in \cite{ss}. Hence, the calculation here of the automorphism group is valid in the rigid analytic sense, and the same is true for products of such spaces.
\end{remark}

Finally, let us note that the group $G=PGL_{d+1}(k)$ acts on $\PR^d_K$ by analytic automorphisms, and since we removed all the $k$-rational hyperplanes, this induces such an action of $G$ on  $\Omega^{d}_{k,K}$.

% XXXXXXXXXXXXXXXXXXXXXXXXXXXXXXXXXXX  The Bruhat-Tits building  XXXXXXXXXXXXXXXXXXXXXXXXXXXXX

\section{The Bruhat-Tits building}\label{sec:btb}

In this section we recall some definitions and facts on the Bruhat-Tits building of $SL_{d+1}/k$ and its relation to the Drinfeld space. More details and proofs can be found in \cite {ber} and \cite{brt}.

We will use the terminology introduced in \cite{brt}, I.1, regarding simplicial and polysimplicial complexes. Let us recall it briefly. A polysimplicial complex is a set $A$, together with a family $\mathcal{F}$ of subsets of $A$ (called faces), a partial ordering on $\mathcal{F}$ and an affine structure on $\bar{F}:=\bigcup\limits_{F' \leq F} F'$ for any $F\in \mathcal{F}$, such that:
\begin{enumerate}
\item
The sets of $\mathcal{F}$ form a partition of $A$.
\item
For any $F \in \mathcal{F}$, $\bar{F}$ is a closed polysimplex (i.e a finite product of simplices), in a compatible way with the affine structure and order relation.  We call each $\bar{F}$ a \emph{closed face}.
\item
The dimensions of the faces are bounded. We call a closed face of maximum dimension a \emph{chamber} and a closed face of codimension $1$ a \emph{facet}.
\item
Any two faces are connected by a sequence of chambers, such that consecutive terms intersect in a facet. Such a sequence is called a \emph{gallery}.
\end{enumerate}
We order the closed faces by inclusion. The closed faces are in order preserving bijection with the faces, since $F=\bar{F}^{\circ}$.
A polysimplicial complex has a topology, which is the gluing of the Euclidean topologies on the closed faces along the inclusion maps.
A morphism of  polysimplicial complexes is a map of the underlying sets, carrying faces to faces, and preserving the affine structure. We call such a morphism \emph{chambered} if, in addition, it carries chambers to chambers isomorphically. We call a $0$-dimensional closed face a \emph{vertex}, and a $1$-dimensional closed face an \emph{edge}.
A polysimplicial complex is called a simplicial complex if all its closed faces are simplices.

We keep the notation of the previous section, and let $G=PGL_{d+1}(k)$ and $V$ be the $d+1$-dimensional $k$-vector space,
$$ V=\bigoplus_{i=0}^d k T_i \subseteq K[T_0,..,T_d] $$ 

The building of $SL_{d+1}/k$ is a simplicial complex, which we denote by $B^d_k$. It has two well known descriptions: a combinatorial one, in which the vertices are given in terms of lattices, and a Euclidean one, in which the points are given in terms of norms. Let us start with the combinatorial one.

A lattice in $V$ is a finitely generated $O_k$-module which spans $V$ over $k$. Two lattices $L_1,L_2$ in $V$ are called \emph{homothetic} if $L_1 =\alpha  L_2$ for some $\alpha \in k \setminus \{0\}$. The vertices of $B^d_k$ are the homothety classes of lattices in $V$. A face of $B^d_k$ (given by the set of its vertices) is a set of the form $\{[L_0],..,[L_k]\}$ for distinct lattices $L_0,..,L_k$ satisfying $L_0 \supseteq L_2 \supseteq .. \supseteq L_k \supseteq \pi L_0$.
The chambers are of dimension $d+1$. A standard example of such a chamber is the chamber $\{[L_0],...,[L_d]\}$ given by $$L_i  = \langle T_0,..T_{d-i},\pi T_{d+1-i} ,.., \pi T_d \rangle $$ We shall call this chamber \emph{the basic chamber} of $B^d_k$.

The second description of $B^d_k$ is via \emph{norms} on $V$. A function $\rho:V\rightarrow \R$ is called a norm if it satisfies: 
\begin{enumerate}
\item $\rho(u+v)\leq \rho(u)+\rho(v)$ for all $u,v\in V$.
\item $\rho(\alpha v)=|\alpha|\rho(v)$ for all $v\in V, \alpha \in k$.
\item $\rho(v)>0$ for all $v \in V \setminus \{0\}$.
\end{enumerate}

Two norms $\rho_1,\rho_2$ on $V$ are called homothetic if $\rho_1=c\cdot \rho_2$ for some constant $c>0$. Then the underlying set of $B^d_k$ (which we denote by the same notation) is the set of norms on $V$ modulo homothety. 

For any basis $(v_0,..,v_d)$ of $V$, let $A(v_0,..,v_d)$ be the set of homothety classes of norms of the form 

$$ \rho(\sum \alpha_i v_i) = \max_i {c_i |\alpha_i|} $$

for some positive reals $c_0,..,c_d$. Such sets are called \emph{apartments} of $B^d_k$, and each is isomorphic to $\R^{d+1}/\R(1,1,..,1)$ via the map from $A(v_0,..,v_d)$ to $\R^{d+1}$, 

$$[\rho]\mapsto [(-\log (\rho(v_0)), .., -\log(\rho(v_d)))]$$

Therefore, the apartments are Euclidean spaces. The apartments cover the building $B^d_k$, and moreover, for any two points in the building there is an apartment containing 
both. 

We will denote the apartment of the standard basis, $A(T_0,..,T_d)$ by $\Lambda$. We call this apartment \emph {the basic apartment} of $B^d_k$.

The combinatorial description of $B^d_k$ and the Euclidean one are related by the following correspondence:  The class of a lattice $L$ corresponds to the homothety class of the norm $\rho_L$, defined by

\begin{equation}
\label{norm_formula}
\rho_L(v)=\min \{ |c| \, | \, c\in k \setminus \{0\}\text{ and } c^{-1}v \in L \}
\end{equation}
for any $v \in V \setminus\{0\}$. In particular, the basic chamber described above is given in terms of norms by $$ \Delta = \{\rho \in \Lambda \, | \, \rho(T_0) \geq \rho(T_1) \geq .. \geq \rho(T_d) \geq |\pi|\rho(T_0) \} $$

Let us define the maps discussed in the introduction,
\[ \begin{array}{c}
\Omega^d_{k,K} \\
j\uparrow \downarrow \tau \\
B^d_k
\end{array}\]
Tha map $\tau$ is simply defined by restriction: Given a point $[\rho]\in \Omega^d_{k,K}$, the multiplicative seminorm $\rho$ on $K[T_0,..,T_d]$ is nonzero on each nonzero element of $V$, and therefore the restriction $\rho |_V$ is a norm on $V$. We define $\tau([\rho])=[\rho|_V]$.

The map $j$ is defined as follows: Given a norm $\rho$ on $V$, let $(v_0,..,v_d)$ be any basis of $V$ such that $[\rho] \in A(v_0,..,v_d)$. We extend $\rho$ to a seminorm $\tilde{\rho}$ on $K[T_0,..,T_d]$ by the formula 

\begin{equation}
\label{embedformula}
 \tilde{\rho}\Big ( \sum_{i_0,..,i_d\geq 0}{ a_{i_0,..,i_d} \prod_{k=0}^d{v_k^{i_k}}}\Big ) = \max_{i_0,..,i_d\geq 0} {|a_{i_0,..,i_d}| \prod_{k=0}^d{\rho(v_k)^{i_k}}}
\end{equation}
It can be proved (see \cite{rtw}) that $\tilde{\rho}$ does not depend on the choice of the basis $v_0,..v_d$. We define $j([\rho])=[\tilde{\rho}]$.

We summarize some of the properties of $j$ and $\tau$ in the following lemma:
\begin{lemma} \begin{enumerate}
\item
$j$ is a section of $\tau$, i.e. $\tau \circ j = id$.
\item
$\tau$ is surjective and $j$ is injective. 
\item
$\tau$ and $j$ are continuous and $G$-equivariant. 

\end{enumerate} \end{lemma}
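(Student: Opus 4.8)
The plan is to extract everything from part~(1), so that (2) and (3) become essentially formal, and to isolate the one genuinely non-trivial input: that the formula~(\ref{embedformula}) defines a \emph{multiplicative} seminorm which does not depend on the diagonalizing basis.

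First I would check that $j$ is well defined. Fix a norm $\rho$ on $V$ and a basis $(v_0,\dots,v_d)$ of $V$ with $[\rho]\in A(v_0,\dots,v_d)$, and let $\tilde\rho$ be given by~(\ref{embedformula}). The point is that $\tilde\rho$ is a multiplicative seminorm on $K[T_0,\dots,T_d]$ extending the norm of $K$ — this is the standard fact that the monomial (``Gauss-type'') norm attached to a basis and a system of weights is multiplicative — and that it is independent of the chosen diagonalizing basis, which is exactly the content cited from \cite{rtw}. Since $\tilde\rho(T_i)\ne0$, the class $[\tilde\rho]$ is a point of $\PR^d_K$; replacing $\rho$ by $c\rho$ with $c>0$ multiplies $\tilde\rho$ on each homogeneous part of degree $e$ by $c^e$, which is precisely the equivalence relation defining $\PR^d_K$, so $[\tilde\rho]$ depends only on $[\rho]$. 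Next, writing $v\in V$ as $v=\sum_k\beta_k v_k$, the formula~(\ref{embedformula}) gives $\tilde\rho(v)=\max_k|\beta_k|\rho(v_k)$, and this equals $\rho(v)$ precisely because $[\rho]\in A(v_0,\dots,v_d)$; hence $\tilde\rho|_V=\rho$. In particular $\tilde\rho$ is nonzero on every nonzero element of $V$, so on every $k$-rational linear form, whence $[\tilde\rho]\in\Omega^d_{k,K}$ and $j$ is well defined. The same identity $\tilde\rho|_V=\rho$ gives $\tau(j([\rho]))=[\tilde\rho|_V]=[\rho]$, which is~(1); and this immediately yields~(2), since $j$ then has a left inverse (hence is injective) and $\tau$ a right inverse (hence is surjective).

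For the $G$-equivariance in~(3), I would use that $g\in PGL_{d+1}(k)$ acts on $\PR^d_K$ by precomposition with the linear substitution $g$, which has coefficients in $k$ and therefore preserves $V$; restriction to $V$ thus intertwines the two actions, so $\tau$ is equivariant. For $j$, the substitution $g$ carries a diagonalizing basis of $\rho$ to a diagonalizing basis of $g\cdot\rho$ and transforms~(\ref{embedformula}) accordingly; since $j$ does not depend on the choice of diagonalizing basis, $j(g\cdot[\rho])=g\cdot j([\rho])$. For continuity, recall that the Berkovich topology on $\PR^d_K$ is generated by the evaluations $[\rho]\mapsto\rho(P)/\rho(Q)$ with $P,Q$ homogeneous of equal degree, while the topology on $B^d_k$ is the colimit of the Euclidean topologies on its closed faces, each of which lies in some apartment. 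On an apartment $A(v_0,\dots,v_d)$ the building coordinates are the classes $[(-\log\rho(v_0),\dots,-\log\rho(v_d))]$ in $\R^{d+1}/\R(1,\dots,1)$, determined by the ratios $\rho(v_i)/\rho(v_j)$, which are continuous on $\Omega^d_{k,K}$; this gives continuity of $\tau$. Dually, on that apartment $\tilde\rho(P)=\max_I|a_I|\prod_k\rho(v_k)^{i_k}$ for $P=\sum_I a_I\prod_k v_k^{i_k}$, so each ratio $\tilde\rho(P)/\tilde\rho(Q)$ is a quotient of maxima of finitely many log-linear functions of the coordinates, hence continuous; thus $j$ is continuous on every closed face, and therefore continuous.

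I expect the only real obstacle to be the verification underlying part~(1) — that~(\ref{embedformula}) is multiplicative and basis-independent — and this is precisely what \cite{rtw} supplies. Everything else is bookkeeping: tracking homothety classes, restrictions to $V$, the action of the $k$-rational linear group, and the gluing topology on the building.
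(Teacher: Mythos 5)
Your proposal is correct and follows the same route as the paper: reduce everything to the identity $\tilde\rho|_V=\rho$ from formula (\ref{embedformula}), deduce (2) formally, check equivariance, and verify continuity via the explicit descriptions of the topologies. Two points of comparison are worth recording. First, your treatment of the $G$-equivariance of $j$ is actually more careful than the paper's: the paper deduces it from ``$\tau$ is $G$-equivariant and $\tau\circ j=\mathrm{id}$,'' which is not a formal implication (both $j(g\cdot[\rho])$ and $g\cdot j([\rho])$ lie in the fiber $\tau^{-1}(g\cdot[\rho])$, but $\tau$ is far from injective, so this alone does not identify them); your argument — that $g$ carries a diagonalizing basis for $\rho$ to one for $g\cdot\rho$ and that (\ref{embedformula}) is basis-independent — is the correct justification, and likewise your explicit verification that $[\tilde\rho]$ lands in $\Omega^d_{k,K}$ and respects homothety classes fills in details the paper leaves implicit. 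Second, for the continuity of $\tau$ your phrasing via apartment charts is the wrong direction in principle: apartments are closed, not open, in $B^d_k$, so continuity of coordinate functions on each apartment does not by itself give continuity of a map \emph{into} the building. The clean statement (which is what the paper uses, and what your observation about the ratios $\rho(v_i)/\rho(v_j)$ really amounts to) is that the sets $\{[\rho]\mid\rho(v)<c\,\rho(w)\}$ with $v,w\in V$ form a subbasis of the topology of $B^d_k$, and their $\tau$-preimages are literally the analogous open sets of $\Omega^d_{k,K}$. Your argument for the continuity of $j$ — face by face, using that the building carries the weak topology with respect to its closed faces — is the right direction and matches the paper.
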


\begin{proof}
The first statement is obvious, and the second one follows from it. $\tau$ is obviously $G$-equivariant, and by the first statement, so is $j$. 
Let us prove the continuity of $j$ and $\tau$. The sets $\{[\rho]\,|\,\rho(v) < c \rho(w)\}$ for $c>0$ and $v,w \in V$ are a basis of the topology of $B^d_k$. Similarly, the sets $\{[\rho] \, | \, \rho(f) < c \rho(g)\}$ for $c>0$, $f,g \in K[T_0,..,T_d]$ are a basis of the topology of $\Omega^d_{k,K}$. The continuity of $\tau$ follows immediately. The continuity of $j$ can be easily verified on each apartment of $B^d_{k}$, using fomula (\ref{embedformula}).
\end{proof} 

% XXXXXXXXXXXXXXXXXXXXXXXXXXXXXXXXXXXXX Products of Drindeld spaces XXXXXXXXXXXXXXXXXXXXXXXXXXXXXXXXXXXX

\section{Products of Drindeld spaces}
We now introduce our main objects of inquiry, which are products of Drinfeld spaces over various fields.
As in the introduction, we let $r$ and $d_1,...,d_r$ be natural numbers,  $l$ a non-Archimedean local field, $K$ a non-Archimedean field containing $l$, and $k_1,..,k_r$ finite extensions of $l$, which are contained in $K$.
The following notation will be valid throughout the rest of this paper:

\begin{itemize}

\item
$\X_i=\Omega_{k_i,K}^{d_i}$. $\X_i$ has projective coordinates $T_{i,0},..,T_{i,d_i}$ coming from the embedding $\X_i \hookrightarrow \mathbb{P}^{d_i}_K$, and affine coordinates $t_{i,j}=T_{i,j}/T_{i,0}$ coming from the embedding $\X_i \hookrightarrow \A^{d_i}_K \cong \mathbb{P}^{d_i}_K \setminus \{ z: T_{i,0}(z) = 0 \}$, as explained in section \ref{sec:DrSp}. For convenience, we put $t_{i,0}=1$.
\item
$G_i=PGL_{d_i+1}(k_i)$
\item
$\B_i=B^{d_i}_k$, the Bruhat-Tits building of  $SL_{d_i+1}/k_i$. It is identified with the space of norms on the $k_i$-vector space $V_i:=\bigoplus_j{k_i T_{i,j}}$, modulo homothety. 
\item
$\Lambda_i$ = The basic apartment of $\B_i$, determined by the coordinates $T_{i,0},..,T_{i,d_i}$. Each point of $\Lambda_i$ is represented by a norm of the form $\rho(\sum_j a_jT_{i,j})=\max_j(r_j |a_j|)$, for some positive real numbers $r_0,..,r_{d_i}$.
\item
$\pi_i$ = a uniformizer of $k_i$
\item
$\Delta_i$ = The basic chamber of the apartment $\Lambda_i$, given by the inequalities $|T_{i,0}(x)|\geq |T_{i,0}(x)| \geq .. \geq |T_{i,d_i}(x)| \geq |\pi_i||T_{i,0}(x)|$.
\item
$\X=\prod \X_i$, a product in the category of Berkovich spaces over $K$. We will give a concrete description of $\X$ in terms of seminorms below.
\item
 $G=\prod G_i$, $\B=\prod \B_i$, $\Lambda = \prod \Lambda_i$ and $\Delta=\prod \Delta_i$. $\B$ and $\Lambda$ are polysimplicial complexes, and $\Delta$ is a polysimplex.
\item
For a local field $h$ contained in $K$, we put $\B_i(h)=B^{d_i}_h$ and $\B(h)=\prod \B_i(h)$.
\item
$k=$ the compositum (in $K$) of all the fields $k_i$. Then $k$ is a complete local field and the extensions $k/k_i$ are finite.  
\end{itemize}

Let us describe the points of $\X$ in terms of seminorms. The product $\prod_i \PR^{d_i}_K$ is the Berkovich space whose points are multiplicative seminorms $\rho$ on $K[T_{i,j}]$ (for $1\leq i\leq r$, $0 \leq j \leq d_i$) which extend the norm on $K$, such that for each $1\leq i\leq r$, the numbers $\rho(T_{i,0}),..,\rho(T_{i,d_i})$ are not all zero, modulo the following equivalence relation: $\rho_1 \sim \rho_2$ if and only if there exist constants $c_1,..,c_r$ such that for any polynomial $p \in K[T_{i,j}]$ which is homogenous in each group of variables $T_{i,j}$ (for $0\leq j \leq d_i$) of degree $e_i$, $$\rho_1(p)=(\prod_{i=1}^r c_i^{e_i}) \rho_2(p)$$
This can be proved, as in the case of a single projective space, using the affine covering by the affine sets defined by the conditions $T_{i,j_i}(x) \neq 0 \text{ for } 1\leq i \leq r$,  for all possible choices of indices $0 \leq j_i \leq d_i$. 

We now define

$$ \X = \Big \{[\rho] \in \prod_i \PR^{d_i}_K \, | \,  \rho(\sum_j \alpha_j T_{i,j}) \neq 0 \text{ for all }i \text{ and } (\alpha_0,..,\alpha_{d_i})\in k_i^{d_i+1}\setminus \{0\} \Big \} $$

We cover $\X$, as we covered a single Drinfeld space in section \ref{sec:DrSp}, by the following subsets:

\begin{equation*}
\begin{split}
\X(n)=\Big \{   [\rho]\in \prod_i \PR^{d_i}_K \, | \, & \rho(\sum_{j=0}^{d_i}\alpha_j T_{i,j}) > |\pi_i|^n   \max_j {|\alpha_j|}\max_j {\rho(T_{i,j})}  \\
 & \text{ for all } i \text{ and } (\alpha_0,..,\alpha_{d_i})\in {k_i}^{d_i+1} \setminus \{0\}\Big \} 
\end{split}
\end{equation*}

and 

\begin{equation*}
\begin{split}
\X[n]=\Big \{   [\rho]\in \prod_i \PR^{d_i}_K \, | \, & \rho(\sum_{j=0}^{d_i}\alpha_j T_{i,j}) \geq |\pi_i|^n   \max_j {|\alpha_j|}\max_j {\rho(T_{i,j})}  \\
 & \text{ for all } i \text{ and } (\alpha_0,..,\alpha_{d_i})\in {k_i}^{d_i+1} \setminus \{0\} \Big \} 
\end{split}
\end{equation*}
\begin{prop}
\label{prodstruct}
\begin{enumerate}

\item For all $n$, $\X[n]$ is a strictly affinoid domain in $\prod_i \PR^{d_i}_K$.
\item For all $n$, $\X(n)$ is an open subset of $\prod_i \PR^{d_i}_K$.
\item $\X = \bigcup_{n \geq 1} \X(n) =  \bigcup_{n \geq 1} \X[n]$.
\item $\X$ is an open subset of $\prod_i \PR^{d_i}_K$, and therefore has a Berkovich space structure.
\item $\X \cong \prod_i \X_i$.

\end{enumerate}
\end{prop}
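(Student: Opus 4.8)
The plan is to treat items (1)--(4) as essentially the product version of Proposition~\ref{open}, carried out word-for-word with the extra coordinate-block index $i$, and then to deduce item (5) from the characterisation of products of Berkovich spaces by their affinoid nets. First I would prove (1): fixing $n$, I write $\X[n]=\bigcap_{i}\bigcap_{a\in k_i^{d_i+1}\setminus\{0\}}H_n^{(i)}(a)$, where $H_n^{(i)}(a)$ is the analogue of $H_n(a)$ in the $i$-th block of variables. Just as in Proposition~\ref{open}(1), taking $a$ to run over the unit vectors in each block shows $\X[n]$ lies inside the product multi-annulus $\prod_i A_i(n)$, where $A_i(n)=\{|\pi_i|^n\le |t_{i,j}|\le |\pi_i|^{-n}\ \forall j\}$; this is a strictly affinoid domain in $\prod_i\PR^{d_i}_K$. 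Then, using $H_n^{(i)}(a)=H_n^{(i)}(ca)$ and Lemma~1.2 of \cite{ss} (applied in each block separately, so that $a$ and $b$ unimodular with $a\equiv b\bmod \pi_i^n$ give the same set), each of the $r$ intersections over $a$ is finite, so $\X[n]$ is cut out of $\prod_i A_i(n)$ by finitely many inequalities of the form $|(\sum_j\alpha_j t_{i,j})/t_{i,j'}|\ge |\pi_i|^n$. Hence it is a Laurent domain in a strictly affinoid domain, so strictly affinoid.

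For (2), exactly as in Proposition~\ref{open}(2), the compactness-of-unit-vectors argument (now applied blockwise) and Lemma~1.2 of \cite{ss} replace the infinitely many defining strict inequalities of $\X(n)$ by finitely many, each of the form $\rho(\sum_j\alpha_j T_{i,j})>c\,\rho(T_{i,j'})$, so $\X(n)$ is open. For (3), given $[\rho]\in\X$ I run the argument of Proposition~\ref{open}(3) in each block: the function $f_i(\alpha)=\rho(\sum_j\alpha_j T_{i,j})/(\max_j|\alpha_j|\max_j\rho(T_{i,j}))$ is continuous and strictly positive on the compact set of unimodular vectors in $k_i^{d_i+1}$, so it has a positive lower bound; choosing $n$ with $|\pi_i|^n$ below all $r$ of these bounds gives $[\rho]\in\X(n)\subseteq\X[n]$, and the reverse inclusions $\X(n)\subseteq\X[n]\subseteq\X$ are immediate from the definitions. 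Then (4) is formal: $\X=\bigcup_n\X(n)$ is a union of opens, hence open in $\prod_i\PR^{d_i}_K$, and an open subset of a Berkovich space inherits a Berkovich space structure.

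The substantive point is (5). Here I would argue that the affinoids $\X[n]$, together with their analogues for each coordinate shift (i.e.\ replacing the role of $T_{i,0}$ by $T_{i,j_i}$ in the affine charts, as in the covering used to describe $\prod_i\PR^{d_i}_K$), form a \emph{net} on $\X$ in the sense of \cite{ber4}: they cover $\X$ by (3), and finite intersections are again strictly affinoid domains of the same shape, giving the quasi-net/net condition. On the other hand, $\Omega^{d_i}_{k_i,K}$ is covered by the net $\{\Omega^{d_i}_{k_i,K}[n]\}$ (Proposition~\ref{open}), and the product net on $\prod_i\Omega^{d_i}_{k_i,K}$ consists of the affinoids $\prod_i\Omega^{d_i}_{k_i,K}[n_i]$; since $\X[n]=\prod_i\Omega^{d_i}_{k_i,K}[n]$ as subsets of $\prod_i\PR^{d_i}_K$ and the two families are mutually cofinal (a product $\prod_i\Omega^{d_i}_{k_i,K}[n_i]$ is contained in $\X[\max_i n_i]$ and contains $\X[\min_i n_i]$, up to the coordinate-chart bookkeeping), the analytic structures agree. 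More precisely, I would invoke the universal property of the product in the category of $K$-analytic spaces: the projections $\X\to\PR^{d_i}_K$ land in $\Omega^{d_i}_{k_i,K}$ by the very definition of $\X$, giving a canonical morphism $\X\to\prod_i\Omega^{d_i}_{k_i,K}$; it is a bijection on points by inspection of the seminorm descriptions; and it identifies the defining affinoid nets $\{\X[n]\}$ and $\{\prod_i\Omega^{d_i}_{k_i,K}[n]\}$ by the displayed equality of subsets, hence is an isomorphism of Berkovich spaces.

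The main obstacle I anticipate is purely bookkeeping rather than conceptual: keeping the affine charts straight (the maps $\tau$, $j$, and the coordinates $t_{i,j}=T_{i,j}/T_{i,0}$ are tied to the chart $T_{i,0}\ne0$, whereas to see that $\X[n]$ is a genuine \emph{affinoid domain in} $\prod_i\PR^{d_i}_K$ and that these pieces form a net one must also work in the other charts $T_{i,j_i}\ne0$), and making sure that "finite intersection" in Lemma~1.2 of \cite{ss} is applied separately in each block so that the global finiteness of the defining conditions for $\X[n]$ and $\X(n)$ really goes through. Once that is organised, every step is the one-variable argument of Proposition~\ref{open} repeated $r$ times.
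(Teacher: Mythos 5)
Your proposal is correct and follows essentially the same route as the paper: items (1)--(4) are the blockwise repetition of Proposition \ref{open}, and item (5) rests on the identification $\X[n]=\prod_i\X_i[n]$ (as the spectrum of the completed tensor product of the affinoid algebras) combined with the universal property of the product. The only cosmetic difference is that the paper verifies the universal property directly by testing against a compact affinoid $Y$, whose images under the $f_i$ must land in a common $\X_i(n)$, whereas you build the canonical comparison morphism and match the affinoid nets; both arguments hinge on the same affinoid identification.
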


\begin{proof}
The first four claims follow exactly as the claims in proposition \ref{open}. Let us prove the last one. Let us cover each $\X_i$ by open sets $\X_i(n)$ and by affinoids $\X_i[n]$ as in the proof of proposition \ref{open}. Each $\X_i[n]$ is the Berkovich spectrum of an affinoid algebra (see \cite{ber2}, remark 2.2.2(i) for the generators and relations), and the complete tensor product of these algebras is the affinoid algebra which defines $\X[n]$. Hence, $\X[n] =\prod_i \X_i[n]$.
Let us show that $\X \cong \prod_i \X_i$ by verifying the universal property of the product. There are maps $\pi_{\X_i}:\X \rightarrow \X_i$, coming from the projection maps from $\prod_i \PR^{d_i}_K$ to its factors. Given a $K$-affinoid space $Y$ and maps $f_i:Y \rightarrow \X_i$, since $Y$ is compact, $f_i(Y)$ is compact for all $i$. Since the sets $\X_i(n)$ are open and cover $\X$, there exists $n$ such that $f_i(Y) \subseteq \X_i(n)$ for all $i$. Hence, there is a unique map $f: Y\rightarrow \X[n]$ which commutes with the projection maps from $\X[n]$ to all the $\X_i[n]$'s. Composing $f$ with the inclusion of $\X[n]$ in $\X$, we get a map from $Y$ to $\X$ which commutes with the maps $\pi_{\X_i}$. Its uniqueness follows from the uniqueness of $f$.  
\end{proof}

We now discuss the relation between the building $\B=\prod_i \B_i$ and the space $\X= \prod_i \X_i$. We have seen in section \ref{sec:btb} that each $\B_i$ is embedded, as a topological space, in $\X_i$ via a $G_i$-equivariant embedding $j_i$, and that there exists a projection $\tau_i:\X_i \rightarrow \B_i$ which is also $G_i$-equivariant. We have similar maps
\[ \begin{array}{c}
\X \\
j\uparrow \downarrow \tau \\
\B
\end{array}\]
which are continuous and $G$ - equivariant, and satisfy $\tau \circ j = id$. The map $\tau$ is defined via the $\tau_i$'s by 
$$\tau(x)=(\tau_1(\pi_{\X_1}(x)), .., \tau_r(\pi_{\X_r}(x)))$$ where $\pi_{\X_i}$ is the projection from $\X$ to $\X_i$. The map $j$ is defined as follows: Given $r$ equivalence classes of norms $[\rho_i]\in \B_i$, we take a basis $e_{i,0},..,e_{i,d_i}$ of each $V_i$, such that $[\rho_i]$ lies in the apartment corresponding to this basis. We define $j(([\rho_1],..,[\rho_r]))=[\rho]$ where $\rho$ is the  seminorm on $K[T_{i,j}]=K[e_{i,j}]$ defined by 

\begin{equation}
\label{embedeq}
\rho \Big(\sum_{N=(n_{i,j})} a_N \prod_{i,j}e_{i,j}^{n_{i,j}}\Big)=\max_N |a_N| \prod_{i,j}\rho(e_{i,j})^{n_{i,j}}
\end{equation}
For convenience, we will identify $\B$ with its image in $\X$.

We note that the $(B,N)$-pair structure on each $G_i$ give rise to a $(B,N)$-pair structure on $G$, and $\B$ is the building associated to this structure. Hence, the results of the second chapter of \cite{brt} may be applied to it. In particular, we have the following results:

\begin{lemma}\label{citebrt}
\begin{enumerate} 
\item The group $G$ acts transitively on the set of apartments of $\B$.
\item The stabilizer in $G$ of an apartment of $\B$ acts transitively on the set of chambers of the apartment.
\item Any polysimplicial automorphism of $\B$ carries apartments to apartments.
\end{enumerate}
\end{lemma}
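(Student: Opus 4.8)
The plan is to reduce everything to the corresponding statements for the individual buildings $\B_i$, which are the buildings of $SL_{d_i+1}/k_i$ and for which the statements are part of the standard Bruhat--Tits theory (\cite{brt}, Chapter 2). The key observation is that $G = \prod_i G_i$ carries a $(B,N)$-pair structure obtained by taking the product of the $(B,N)$-pairs of the factors: if $(B_i, N_i)$ is the standard $(B,N)$-pair of $G_i$, then $(B,N) := (\prod_i B_i, \prod_i N_i)$ is a $(B,N)$-pair of $G$, with Weyl group $W = \prod_i W_i$. The polysimplicial building attached to a product $(B,N)$-pair is exactly the product of the buildings attached to the factors, with apartments the products of apartments and chambers the products of chambers; this is precisely the polysimplicial complex $\B = \prod_i \B_i$ we have embedded in $\X$. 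So the three assertions follow by applying \cite{brt}, Chapter 2, to the $(B,N)$-pair $(B,N)$ of $G$.

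For concreteness I would spell out each item. For (1): an apartment of $\B$ is a product $\prod_i A_i$ where $A_i$ is an apartment of $\B_i$; by Lemma 1 of section \ref{sec:btb} (transitivity of $G_i$ on apartments of $\B_i$), for any two apartments $\prod_i A_i$ and $\prod_i A'_i$ we can pick $g_i \in G_i$ with $g_i A_i = A'_i$, and then $g = (g_1,\dots,g_r)$ carries one to the other. For (2): the stabilizer in $G$ of $\prod_i A_i$ is $\prod_i \mathrm{Stab}_{G_i}(A_i)$ (a point of $\X$ or of $\B$ is stabilized iff each coordinate is), and the chambers of $\prod_i A_i$ are the products $\prod_i \Delta_i'$ of chambers $\Delta_i'$ of $A_i$; transitivity then follows coordinatewise from the single-building statement, which in the apartment $A(v_0,\dots,v_d)$ of $\B_i$ amounts to the fact that the affine Weyl group acts transitively on the alcoves of a Euclidean apartment. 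For (3): a polysimplicial automorphism $\psi$ of $\B$ carries chambers to chambers isomorphically by definition of a chambered morphism, and it carries galleries to galleries; since an apartment can be characterized, within the building of a $(B,N)$-pair, as a maximal subcomplex isomorphic to the Coxeter complex of $W$ — equivalently, as a thin subcomplex that is "gallery-convex" and retracts onto each of its chambers — and $\psi$ preserves all of these properties, $\psi$ carries apartments to apartments. Alternatively, one invokes directly the structure theory in \cite{brt} I.1 and II, where apartments are intrinsically characterized among subcomplexes of a (poly)simplicial building, so that any automorphism preserves them.

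The main point requiring care is not any deep argument but the bookkeeping that a product of $(B,N)$-pairs is again a $(B,N)$-pair and that its building is the product polysimplicial complex in the sense of \cite{brt} I.1 — in particular that the face relation, the affine structure on closed faces, and the gallery-connectedness of $\B = \prod_i \B_i$ are the ones coming from the product, and that chambers of $\B$ are exactly products of chambers of the $\B_i$. Once this identification is in place, all three statements are immediate transcriptions of the corresponding facts for buildings of reductive groups over local fields, applied factor by factor. I would therefore keep the proof short, stating the product $(B,N)$-pair explicitly and citing \cite{brt} for the three consequences, and noting that (1) and (2) also follow directly from the single-building statements recalled in section \ref{sec:btb} by working in each coordinate separately.
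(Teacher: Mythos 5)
Your proposal is correct and follows essentially the same route as the paper: the paper's entire proof consists of observing (just before the lemma) that the $(B,N)$-pairs of the $G_i$ give a product $(B,N)$-pair on $G$ whose building is $\B$, and then citing \cite{brt}, corollaries 2.2.6 and 2.8.6. Your coordinatewise verifications of (1) and (2) and the intrinsic characterization of apartments for (3) are a sound, more explicit unpacking of the same citation.
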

\begin{proof} See \cite{brt}, corollaries 2.2.6 and 2.8.6. \end{proof}

% XXXXXXXXXXXXXXXXXXXXX Connectedness and a uniqueness principle XXXXXXXXXXXXXXXXXXXXXXX

\section{ Connectedness and a uniqueness principle }
The following result will be used in the proof of our main theorem.
\begin{prop} \label{prop:uniq}
Let $f$ be an analytic function on $\X$, and let $Y$ be an open subset of $\X$. If $f|_Y=0$ then $f=0$.
\end{prop}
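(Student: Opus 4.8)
The plan is to reduce the statement to the analogous principle for a single affine or projective space, where it follows from the irreducibility of the algebra of functions, by exploiting the covering of $\X$ by the strictly affinoid domains $\X[n]$ and the structure $\X[n] = \prod_i \X_i[n]$ established in Proposition \ref{prodstruct}. First I would reduce to showing that $f$ vanishes on each $\X[n]$: since $\X = \bigcup_{n\geq 1}\X(n) \subseteq \bigcup_{n\geq 1}\X[n]$ and these form an increasing chain, it is enough to show $f|_{\X[n]} = 0$ for all $n$. Likewise one can shrink $Y$: replacing $Y$ by $Y \cap \X(n_0)$ for $n_0$ large enough that this intersection is nonempty and open, we may assume $Y$ is a nonempty open subset contained in some $\X[n]$.

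Next I would set up a connectedness/analytic-continuation argument. The key geometric input is that each $\X[n]$ is a connected strictly affinoid domain — indeed it is a Laurent domain in the multi-annulus $A(n)$, defined by conditions of the form $|(\sum_j \alpha_j t_{i,j})/t_{i,j'}| \geq |\pi_i|^n$, and one checks connectedness either directly (the basic apartment $\Lambda \cap \X[n]$ meets every component and $\X[n]$ retracts onto a connected polytope via $\tau$) or by noting that $\X$ itself is connected and each $\X[n]$ is a "large" affinoid exhausting it. For a connected affinoid space $M(\AF)$ with $\AF$ reduced, an analytic function vanishing on a nonempty open set vanishes identically: its zero set is closed, and one must show it is open. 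To get openness of the zero locus I would pass to a sufficiently small affinoid neighborhood, embed it as a Laurent/Weierstrass domain in a polydisc $B(d,r)$, and use that the Tate algebra $K\{r^{-1}t_1,\dots,r^{-1}t_d\}$ is an integral domain (so an element vanishing on a nonempty open subset of the polydisc — equivalently whose Taylor expansion at an interior point of type avoiding the Shilov boundary has a zero of infinite order — must be zero). Concretely: at any point $x$ in the interior of $\X[n]$ one has a local ring with a power-series description, $f$ vanishes on an open neighborhood hence all its "coefficients" vanish, so $f$ vanishes near $x$; this shows $\{f = 0\}^{\circ}$ is open and closed and nonempty in connected $\X[n]$, hence is everything.

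Assembling: $f|_Y = 0$ forces $f|_{\X[n]} = 0$ for the $n$ with $Y \cap \X[n] \neq \emptyset$; but $\X[n] \subseteq \X[n+1] \subseteq \cdots$ and the larger domains are again connected and contain the smaller ones as nonempty open subsets (each $\X[n]$ contains the open set $\X(n)$, and $\X(n)$ is a nonempty open subset of $\X[n+1]$), so propagating upward gives $f|_{\X[n]} = 0$ for all $n$, and since $\X = \bigcup_n \X[n]$ we conclude $f = 0$.

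The main obstacle is the local openness of the zero set, i.e. justifying rigorously that a Berkovich-analytic function vanishing on a nonempty open set of a connected strictly affinoid space vanishes identically. The cleanest route is to invoke that $\X$, being a product of the $\X_i$ each of which sits inside an affine space $\A^{d_i}_K$ which is integral and connected, is itself integral and connected as a Berkovich analytic space, and that on an integral (reduced and irreducible) $K$-analytic space the ring of global analytic functions — or at least the local rings $\mathcal{O}_{\X,x}$ at points $x$ with trivial completed residue extension — are integral domains with no nonzero element supported on a proper closed subset; combined with the fact that the support of a nonzero analytic function is nowhere dense, one gets the uniqueness principle. I would cite the relevant structure results from \cite{ber2} and \cite{ber4} for this step rather than reprove it.
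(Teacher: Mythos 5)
Your overall strategy coincides with the paper's: exhaust $\X$ by the strictly affinoid domains $\X[n]$, show each is connected with an integral affinoid algebra, deduce that a function with zero germ at one interior point vanishes on all of $\X[n]$, and propagate over $n$. Two of your intermediate justifications, however, would not survive scrutiny as stated. First, connectedness of $\X[n]$: the alternative ``$\X$ is connected and the $\X[n]$ are large affinoids exhausting it'' proves nothing (a connected space is easily exhausted by disconnected compacta). The other alternative you offer --- retracting $\X[n]$ onto $\X[n]\cap\B$ --- is the correct one and is exactly where the paper invests its effort: it uses Berkovich's explicit deformation $\rho_t$ to build a path from any point of $\X[n]$ to $\tau(x)$ inside the fiber $\tau^{-1}(\tau(x))$, observes that $\X[n]$ is a union of fibers of $\tau$, and notes that $\X[n]\cap\B$ is convex. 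This step cannot be waved away. Second, ``connected with $\AF$ reduced'' is not enough for the uniqueness principle: for a connected reduced space with two irreducible components (e.g.\ two crossing lines), the equation of one component vanishes on a nonempty open subset without being zero. You do correct this later by asking for integrality, but the way the paper actually secures it is worth noting: $\X[n]$ is an affinoid domain in the regular space $\prod_i\PR^{d_i}_K$, hence regular by \cite{ber4}; a regular Noetherian ring is a finite product of domains (Kaplansky), and connectedness kills all but one factor.

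Your mechanism for ``germ zero at $x$ implies $f=0$'' is also shakier than the paper's. A Taylor-expansion or ``zero of infinite order'' argument does not literally make sense at a general Berkovich point (type $2$, $3$ or $4$), and choosing a special point would still require relating vanishing near that point to vanishing of the germ. The paper's route is purely algebraic and works at every point: with $\AF$ a domain, $\AF\to\AF_P$ is injective for $P=\{f:|f(x)|=0\}$, and $\AF_P\to O_{X,x}$ is faithfully flat by \cite{ber4}, Theorem 2.1.4, hence injective; so the germ of $f$ at the single point $x\in Y\cap\X(N)$ being zero already forces $f|_{\X[n]}=0$ for every $n\geq N$, with no open-closed argument and no upward induction through the chain $\X[n]\subseteq\X[n+1]$ needed. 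So: right skeleton, but the two load-bearing facts (connectedness of $\X[n]$ and integrality plus injectivity into the local ring) are precisely the parts your sketch leaves to be done.
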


Recall that a Berkovich space $X$ is called regular if the local rings $O_{X,x}$ are regular, and that for an affinoid algebra $\AF$, $M(\AF)$ is regular if and only if $\AF$ is a regular ring (see \cite{ber2}, proposition 2.3.4).

We will prove proposition \ref{prop:uniq} by showing that the covering affinoids $\X[n]$ are connected and regular, and that for such affinoids, the canonical map from the affinoid algebra to the local ring at any point is injective.

\begin{lemma}
For each $n\geq 1$, $\X[n]$ is pathwise connected and regular.
\end{lemma}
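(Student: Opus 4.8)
The plan is to establish the two assertions — pathwise connectedness and regularity — largely independently, and then to leverage the product structure $\X[n] \cong \prod_i \X_i[n]$ from proposition \ref{prodstruct}(5) to reduce, as far as possible, to the single-factor case. For connectedness, I would first observe that a finite product of pathwise connected spaces is pathwise connected, so it suffices to treat each $\X_i[n] = \Omega^{d_i}_{k_i,K}[n]$. Recall from the proof of proposition \ref{open} that $\Omega^d_{k,K}[n]$ is a Laurent domain inside the multi-annulus $A(n) = \{x \in \A^d_K : |\pi|^n \le |t_i(x)| \le |\pi|^{-n}\}$, cut out by finitely many inequalities of the form $|(\sum_i \alpha_i t_i)/t_j| \ge |\pi^n|$. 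The cleanest route to path-connectedness is to exhibit, for each point $x \in \Omega^d_{k,K}[n]$, a path joining it to a fixed reference point — the Gauss-type norm $\rho_0$ corresponding to the barycenter of the basic chamber $\Delta$, which certainly lies in $\Omega^d_{k,K}[n]$. Concretely, using the coordinates $(-\log\rho(t_1),\dots,-\log\rho(t_d))$ one can flow any seminorm towards the "maximally symmetric" one by a straight-line homotopy in the value-group direction (this is the standard deformation retraction of a Berkovich affinoid onto its skeleton), and the defining inequalities of $\Omega^d_{k,K}[n]$ are preserved along such a flow because they are convex conditions in these logarithmic coordinates. I would spell this out via the description of points of $\PR^d_K$ as multiplicative seminorms and the explicit skeleton of the multi-annulus.

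For regularity, the key input is that regularity of $M(\AF)$ is equivalent to regularity of the ring $\AF$, as recalled just before the proposition, together with the fact (proposition 2.3.3 of \cite{ber2}) that $O_{X,x}$ is regular for every $x$ once $X$ is regular — indeed the problem reduces to checking regularity of the affinoid algebra defining $\X[n]$. Here I would argue that $\Omega^d_{k,K}[n]$ is a rational (Laurent) domain in the smooth $K$-affinoid $A(n)$, hence itself smooth over $K$: the multi-annulus $A(n)$ is visibly smooth (it is a Laurent domain in $\A^d_K$, whose coordinate ring after the obvious localization is a localization of a polynomial ring), and passing to a further Laurent subdomain preserves smoothness since it corresponds, on coordinate rings, to a further localization followed by completion, operations that preserve regularity for excellent rings. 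For the product, smoothness is stable under $\hat\otimes_K$: $\X[n] = \prod_i \X_i[n]$ is a fibre product of smooth $K$-affinoids, hence smooth, hence regular. (If one prefers to avoid invoking smoothness machinery, one can instead note that each local ring $O_{\X[n],x}$ is a localization-completion of a regular ring and appeal directly to the excellence of affinoid algebras.)

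I would then conclude proposition \ref{prop:uniq} — though that is outside the lemma itself — by noting that for a connected regular (hence integral, since regular local implies domain and connectedness glues these to a global domain) affinoid $\X[n]$ the map $\AF \to O_{\X[n],x}$ is injective, and an analytic function vanishing on a nonempty open set vanishes on $\X[n]$ for all $n$, whence on $\X = \bigcup_n \X[n]$.

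The main obstacle I anticipate is the connectedness part: writing down an honest path inside a Berkovich space requires care, because one must stay within the Laurent domain $\Omega^d_{k,K}[n]$ at every moment, and the naive coordinatewise interpolation of multiplicative seminorms need not be multiplicative. The right tool is the canonical deformation retraction of an affinoid onto its skeleton (the piecewise-linear ``tropical'' core), for which the defining inequalities — being of the form $\log\rho(f) - \log\rho(g) \ge \text{const}$ with $f,g$ monomials or linear forms — must be checked to be preserved, and then one connects points within the skeleton, which is an explicit convex polytope. Getting this retraction argument stated correctly, rather than the regularity bookkeeping, is where the real work lies.
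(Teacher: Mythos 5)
Your plan is essentially the paper's proof: both parts use Berkovich's canonical deformation retraction $\rho\mapsto\rho_t$ onto the skeleton to connect an arbitrary point of $\X[n]$ to a point of the (convex, hence path-connected) piece of the building inside $\X[n]$, and both get regularity by observing that $\X[n]$ is an affinoid domain in a regular (smooth) space and that this property passes to affinoid subdomains (the paper cites \cite{ber2}, proposition 3.4.3 and \cite{ber4}, corollary 2.2.8 rather than arguing via excellence, but the substance is the same). One caution on the step you correctly flag as the real work: your proposed justification that the defining inequalities ``are preserved along such a flow because they are convex conditions in these logarithmic coordinates'' does not stand up as stated, since the conditions $\rho(\sum_i\alpha_i t_i)\geq|\pi|^n\rho(t_j)$ involve the seminorm evaluated on linear forms and are \emph{not} functions of the logarithmic coordinates $-\log\rho(t_i)$ alone. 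The honest verification is either (i) to note that Berkovich's retraction satisfies $\rho_t(p)\geq\rho(p)$ for every polynomial $p$ (take $N=0$ in the defining maximum) while $\rho_t(t_j)=\rho(t_j)$ on the coordinate monomials, so the left-hand sides of the Laurent conditions can only increase while the right-hand sides are fixed; or (ii) as the paper does, to first translate by an element of $G$ so that $\tau(x)$ lies in the basic apartment, check that $\rho_t$ then agrees with $\rho$ on all $k_i$-rational linear forms (so the path stays in the fiber $\tau^{-1}(\tau(x))$), and observe that $\X[n]$ is a union of fibers of $\tau$. Either repair is short, but without one of them the connectedness argument has a gap. Your preliminary reduction to a single factor via $\X[n]=\prod_i\X_i[n]$ is harmless but unnecessary, since the retraction argument works directly on the product.
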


\begin{proof}
Let $d$ be any natural number. Let us recall from \cite{ber2} a certain deformation retraction from $\A^d_K$ to a $d$-dimensional Euclidean space.
Let $R=K[x_1,..,x_d]$. For a multi-index $N=(n_1,..,n_d)$ of nonnegative integers, Let $D_N$ be the following operator on $R$:
$$ D_N \Big( \sum_{I=(i_1,..,i_d)\geq 0} a_I x^I  \Big)= \sum_{I \geq N} \binom{i_1}{n_1} \cdot..\cdot \binom{i_d}{n_d} a_I x^I $$
Let $\rho \in \A^d_K$. $\rho$ is a multiplicative seminorm on $R$ which extends the absolute value on $K$. Let us define, for any real number $0 \leq t \leq 1$, 
\begin{equation}
\label{rhodef}
 \rho_t (p) = \max _{N=(n_1,..,n_d)\geq 0} t^{|N|} \rho(D_N (p))
\end{equation}
where $|N|=n_1 + .. + n_d$. It is shown in \cite{ber2}, corollary 6.1.2 and remark 6.1.3(ii), that $\rho_t \in \A^d_K$ for all $t$, and the map $t \mapsto \rho_t$ is continuous.
Clearly, $\rho_0=\rho$. $\rho_1$ is given by
\begin{equation}
\label{rho_1}
\rho_1 \Big( \sum_{N=(n_1,..,n_d)} a_N x^N \Big)= \max_N |a_N| \prod_j \rho(x_j)^{n_j}
\end{equation}
since for $t=1$, the maximum in equation (\ref{rhodef}) is attained at a multi-index $N_0$ for which $\rho(a_{N_0} x^{N_0})=\max_N \rho(a_{N} x^{N})$ and $|N_0|$ is maximal. 

Now, let $d=\sum_i (d_i+1)$ and let $x=[\rho]$ be a point of $\X$ such that $\tau(x) \in \Lambda$. $\rho$ is a seminorm on $K[T_{i,j}]$. 
Let us consider the points $x(t)=[\rho_t]$ for $0\leq t \leq 1$. By  (\ref{embedeq}) and (\ref{rho_1}), $x(1)=\tau(x)$. Hence, the points $\{x(t) \, | \, 0\leq t \leq 1 \}$ form a path from $x$ to $\tau(x)$. Moreover, for all $i$, if $p=\sum_{j} a_{i,j}T_{i,j}$ is any element of $V_i$,
then for any multi-index $N$,  $D_N (p)=\sum_{j\in J} a_{i,j}T_{i,j}$, where $J=\{j \,|\, n_{i,j} \leq 1 \}$. Hence, by the assumption that $\tau(x) \in \Lambda$, we have

$$\rho(D_N (p))=\max_{j \in J} \rho(a_{i,j}T_{i,j}) \leq \max_{0\leq j \leq d_i} \rho(a_{i,j}T_{i,j}) = \rho(p) $$

We conclude that $\rho_t(p)=\rho(p)$. Since $\rho$ and $\rho(t)$ are identical on $V_i$ for all $i$, $\tau(x)=\tau(x(t))$.   
Therefore, the path $\{x(t) \, | \, 0\leq t \leq 1 \}$ lies inside the fiber $\tau^{-1}(\tau(x))$. 
Let $y$ be a point of $\X(n)$, and let $g\in G$ be such that $g \tau(y) \in \Lambda$. Let $x=gy$ and $y(t)=g^{-1}x(t)$ for $0\leq t \leq 1$. Since $\tau$ is $G$-equivariant, the points $\{y(t) \, | \, 0\leq t \leq 1 \}$ form a path from $y$ to $\tau(y)$ inside the fiber $\tau^{-1}(\tau(y))$. 
Since $\X[n]$ is defined inside $\prod_i \PR^{d_i}_K$ by conditions of the form $\rho(a)\geq\rho(b)$ for some linear polynomials $a,b \in K[T_{i,j}]$, and $\tau$ is defined by restricting seminorms to linear polynomials, $\X[n]$ is a union of fibers of $\tau$. Hence $\{ y(t) \,| \, 0 \leq t \leq 1\}$ is a path from $y$ to $\tau(y)$ inside $\X[n]$. We conclude that each point of $\X[n]$ can be pathwise connected to a point in $\X[n] \cap \B$. By the above remark on the definition of $\X[n]$,  $\X[n] \cap \B$ is a convex set. Hence it is pathwise connected. We conclude that $\X[n]$ is pathwise connected.

Finally, by proposition \ref{prodstruct}, $\X[n]$ is an affinoid domain inside an affine $K$-analytic space, which is regular, by \cite{ber2}, proposition 3.4.3. By \cite{ber4}, corollary 2.2.8, $\X[n]$ is regular.
\end{proof}

\begin{lemma} \label{inject}
Let $\AF$ be a regular affinoid algebra, such that $X=M(\AF)$ is connected. Then \begin{enumerate}
\item $\AF$ is an integral domain.
\item For any $x \in X$, the canonical map from $\AF$ to $O_{X,x}$ is injective. 
\end{enumerate}
\end{lemma}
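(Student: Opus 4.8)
The plan is to prove both statements using the standard commutative-algebra facts about regular local rings, namely that a regular local ring is a domain, combined with the fact that for affinoid algebras the localization maps detect zero divisors. I would organize the proof as follows.

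\textbf{Step 1: $\AF$ is reduced.} Since $\AF$ is regular, every local ring $\AF_{\mathfrak m}$ at a maximal ideal is a regular local ring, hence an integral domain, hence reduced; a ring all of whose localizations at maximal ideals are reduced is reduced. (Alternatively one may invoke that regular rings are reduced directly.) In particular $\AF$ has no nonzero nilpotents.

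\textbf{Step 2: $\AF$ is connected as a ring, i.e. has no nontrivial idempotents.} This is where the topological hypothesis that $X=M(\AF)$ is connected enters: the idempotents of $\AF$ are in bijection with the open-closed subsets of $M(\AF)$ (see \cite{ber2}), so connectedness of $X$ forces the only idempotents to be $0$ and $1$.

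\textbf{Step 3: deduce that $\AF$ is a domain.} A Noetherian ring which is reduced and connected need not be a domain in general, so instead I would argue via the local rings. Since $\AF$ is regular, each $O_{X,x}=\AF_{\mathfrak m_x}$ is a regular local ring, hence a domain, hence has a unique minimal prime, namely $(0)$. Thus for every maximal ideal $\mathfrak m$ of $\AF$, there is a unique minimal prime of $\AF$ contained in $\mathfrak m$. Since every prime is contained in a maximal ideal, it follows that the minimal primes of $\AF$ are pairwise comaximal; but $\AF$ being reduced, $(0)$ is the intersection of its finitely many minimal primes $\mathfrak p_1,\dots,\mathfrak p_m$, and if $m\geq 2$ comaximality gives a nontrivial idempotent decomposition $\AF\cong\prod_i \AF/\mathfrak p_i$, contradicting Step 2. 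Hence $m=1$ and $(0)$ is prime, so $\AF$ is an integral domain. This proves (1).

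\textbf{Step 4: injectivity of $\AF\to O_{X,x}$.} Let $f\in\AF$ map to $0$ in $O_{X,x}=\AF_{\mathfrak m_x}$. Then $sf=0$ for some $s\notin\mathfrak m_x$, in particular $s\neq 0$. Since $\AF$ is a domain by (1), $sf=0$ with $s\neq 0$ forces $f=0$. This proves (2). The main obstacle is really Step 3 — passing from ``all localizations are domains'' to ``the ring itself is a domain'' requires the connectedness input and the observation that minimal primes become comaximal, rather than being automatic; everything else is a routine application of standard facts. Finally, Proposition \ref{prop:uniq} follows by applying the lemma to each connected regular affinoid $\X[n]$: if $f$ vanishes on a nonempty open $Y$, it vanishes on the nonempty open $Y\cap\X[n]$, and since $\AF_n\to O_{\X[n],x}$ is injective and $O_{\X[n],x}$ is a domain, $f$ vanishes on an open implies its image in each $O_{\X[n],x}$ vanishes (the ideal of a nonempty open subset in a domain local ring argument, or: the restriction of $f$ to the connected regular $\X[n]$ has a zero set with nonempty interior, hence is identically zero since $\X[n]$ is irreducible); as $\X=\bigcup_n\X[n]$, $f=0$ on $\X$.
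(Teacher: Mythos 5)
Your part (1) is correct. It is essentially a hands-on rederivation of what the paper obtains by citing Kaplansky (Theorem 168: a regular Noetherian ring is a finite product of domains): you pass through ``each $\AF_{\mathfrak m}$ is a regular local ring, hence a domain, hence each maximal ideal contains a unique minimal prime,'' conclude that the minimal primes are pairwise comaximal, and then use reducedness plus the Chinese Remainder Theorem to split $\AF$ into a product, which connectedness forbids unless there is one factor. This is the same decomposition the paper uses, just proved rather than quoted; either route is fine.

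Part (2) has a genuine gap: you identify $O_{X,x}$ with the localization $\AF_{\mathfrak m_x}$ at a maximal ideal, and this is false for Berkovich spaces. A point $x\in M(\AF)$ is a bounded multiplicative seminorm; its kernel $P_x=\{f\in\AF : |f(x)|=0\}$ is a prime ideal that need not be maximal (for the Gauss point of a disc it is $(0)$), and the local ring $O_{X,x}$ is by definition the direct limit of the rings $\AF_V$ over affinoid neighbourhoods $V$ of $x$ in $X$, which is in general strictly larger than any localization of $\AF$. Consequently your Step 4 argument ``$f\mapsto 0$ in $O_{X,x}$ implies $sf=0$ for some $s\notin\mathfrak m_x$'' is not available: an element of $\AF$ could a priori die in $O_{X,x}$ because it dies after restriction to every small affinoid neighbourhood of $x$, and ruling that out is precisely the content of the statement. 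The paper closes this gap by invoking Berkovich's theorem (\cite{ber4}, Theorem 2.1.4) that the canonical map $\AF_{P_x}\to O_{X,x}$ is faithfully flat, hence injective; combined with the injectivity of $\AF\to\AF_{P_x}$, which does follow from part (1) exactly as in your Step 4, this yields (2). As written, your argument only establishes that $\AF\to\AF_{P_x}$ is injective, and the same over-identification of local rings also underlies the sketch of Proposition \ref{prop:uniq} at the end of your proposal.
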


\begin{proof}
\begin{enumerate}
\item
$\AF$, as an affinoid algebra, is Noetherian. By \cite{kap}, Theorem 168, $\AF$ is a finite product of integral domains: $\AF=\prod_{i=1}^n A_i$. Let us decompose $1=a_1+..+a_n$ with $a_i \in A_i$. Then, $X$ is a disjoint union of the supports of $a_i$, which are nonempty open sets. If $n>1$, this contradicts the connectedness of $X$. Hence, $n=1$.
\item
Let $P$ be the ideal $\{f\in \AF\,|\, |f(x)|=0\}$. Then $P$ is prime and the map from $\AF$ to $\AF_P$ is injective. By \cite{ber4}, Theorem 2.1.4, the canonical map from $\AF_P$ to $O_{X,x}$ is faithfully flat, and in particular, injective. Therefore, the composition of the two maps is injective. \qedhere
\end{enumerate}
\end{proof}

\begin{proof}[Proof of proposition \ref{prop:uniq}]
Let $x$ be a point of $Y$, and let $N$ be such that $x \in \X(N)$. For any $n \geq N$, the image of $f|_{X[n]}$ in $O_{X[n],x}$ is $0$, and by the above two lemmas, $f|_{X[n]}=0$. Hence $f=0$.
\end{proof}
% XXXXXXXXXXXXXXXXXXXXX  The induced automorphism on the building XXXXXXXXXXXXXXXXXXXXXXXXXX

\section{The induced automorphism on the building}

It is a crucial step in the proof of our main theorem to show that an analytic automorphism of $\X$ preserves $\B$.
We will prove it in this section, along with some combinatorial properties of the induced automorphism.

Recall that each edge of $\B$ comes from an edge of one of the factors $\B_i$.
\begin{definition}
We will say that an edge $e$ of $\B$ which comes from $\B_i$ has length $l(e):=-\log |\pi_i|$. A polysimplicial automorphism of $\B$ will be called \emph{length preserving} if it carries any edge of $\B$ to an edge of the same length.
\end{definition}

\begin{prop}\label{induce} Let $\phi$ be an analytic automorphism of $\X$. Then 
\begin{enumerate}
\item
$\phi(\B)=\B$.
\item The induced automorphism on $\B$ is polysimplicial, chambered, and length preserving.
\item $\phi$ commutes with the projection $\tau:\X \rightarrow \B$.
\end{enumerate}
\end{prop}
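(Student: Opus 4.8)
## Proof strategy for Proposition \ref{induce}

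The plan is to extract combinatorial and metric data from the analytic structure of $\X$, so that an abstract analytic automorphism $\phi$ is forced to respect the building. The first and central observation is that $\tau$ is intrinsically defined: for a point $x = [\rho] \in \X$, the value $\tau(x)$ records the restriction of $\rho$ to the linear forms in each group of variables, and $\B$ is exactly the image of $j$. So I would first aim to characterize $\B \subseteq \X$ and the fibers of $\tau$ purely in terms of the ring of analytic functions, without reference to coordinates. The natural tool is the family of bounded invertible analytic functions: on $\X_i$ the functions $(\sum_j \alpha_j t_{i,j})/(\sum_j \beta_j t_{i,j})$ for $\alpha,\beta \in k_i^{d_i+1}\setminus\{0\}$ are invertible (no $k_i$-rational hyperplane is hit), and their absolute values $|f(x)|$ are constant on $\tau$-fibers. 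Conversely, the point $\tau(x) \in \B$ is determined by the collection of all these absolute values. Thus I would show: two points of $\X$ have the same image under $\tau$ iff $|f(x)| = |f(y)|$ for every invertible analytic $f$ of this "linear-fractional" type, and moreover $\B$ itself is singled out as the set of points where a certain maximality/Shilov-type condition holds (a point lies in $\B$ iff it is the maximal point, in the sense of the absolute values of all invertible functions, of its $\tau$-fiber — this is precisely what formula (\ref{embedeq}) and Berkovich's retraction computation in the preceding lemma express).

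Granting such an intrinsic description, part (3) and the inclusion $\phi(\B) \subseteq \B$ follow formally: $\phi$ permutes invertible analytic functions (pulls back units to units), hence carries $\tau$-fibers to $\tau$-fibers and the distinguished point of a fiber to the distinguished point of the image fiber; applying the same argument to $\phi^{-1}$ gives $\phi(\B) = \B$ and shows $\tau \circ \phi = \bar\phi \circ \tau$ for the induced map $\bar\phi$ on $\B$. This is the cleanest route; the alternative, more hands-on route is to argue that $\phi$ preserves the Shilov boundaries of the affinoids $\X[n]$ (which are finite sets of "diagonal" norm points lying in $\B$) and then pass to a limit, but identifying $\B$ with $\bigcup_n (\text{Shilov points of } \X[n])$ requires care, so I would prefer the units-based argument and fall back on Shilov boundaries only if the intrinsic characterization of $\B$ proves too delicate.

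For part (2), once $\bar\phi$ is a homeomorphism of $\B$ onto itself it remains to see it is polysimplicial, chambered, and length preserving. Here I would again use invertible functions and the metric they induce. The polysimplicial structure and the $\R$-affine structure on each apartment can be recovered from the "norm" interpretation: on an apartment $\Lambda = \prod \Lambda_i$, the coordinates are $-\log|T_{i,j}/T_{i,0}|$, and the walls, faces, and integral (lattice) structure are cut out by conditions $|T_{i,j}/T_{i,j'}| = |\pi_i|^m$ — all readable off absolute values of units. Since $\bar\phi$ preserves the set of units and their absolute values, it preserves this structure, hence is polysimplicial; that it carries chambers isomorphically onto chambers (chambered) follows because a homeomorphism preserving the polysimplicial structure and its dimension must do so. Length preservation is the finest point: the length $-\log|\pi_i|$ of an edge coming from $\B_i$ is exactly the "ramification gap" visible through the functions $t_{i,j}/t_{i,j'}$, so two edges have equal length iff the corresponding absolute-value increments agree; since $\bar\phi$ preserves all these increments it preserves edge lengths. (Note $\bar\phi$ is still allowed to send an edge of $\B_i$ to an edge of $\B_{i'}$ with $i \ne i'$, which is why we phrase this as \emph{length} preservation rather than preservation of each factor.)

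The main obstacle I anticipate is the intrinsic characterization of $\B$ inside $\X$ — i.e., proving that a point $x \in \X$ lies in $\B$ if and only if it is the point of its $\tau$-fiber on which the absolute values of all linear-fractional units are simultaneously maximal. One direction is Berkovich's retraction (the path $x(t)$ constructed in the previous lemma shows every fiber deformation-retracts onto its intersection with $\B$, and along that path the relevant absolute values are monotone), but the converse — that no point outside $\B$ is a simultaneous maximum — needs an argument that a non-multiplicative-enough seminorm can always be strictly increased on some unit. I would handle this by a local computation on a single apartment's fiber, reducing via the $G$-action (lemma \ref{citebrt}) to the case $\tau(x) \in \Lambda$, where formula (\ref{embedeq}) makes $j(\tau(x))$ manifestly the maximal point and the Gauss-norm comparison is explicit. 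Everything else in the proposition is then bookkeeping with units and their absolute values.
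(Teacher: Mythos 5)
Your overall strategy --- characterize $\B$ and the projection $\tau$ intrinsically via absolute values of analytic functions, transport everything under $\phi$, and use the integrality of $\log|u|$ for invertible $u$ to control the combinatorics --- is the same as the paper's. But the specific class of functions you propose to use breaks the central step. You want to single out $\tau(x)$ inside its fiber as the point where the absolute values of all \emph{invertible} functions are maximal, and you correctly identify the hard direction as showing that a point $x\notin\B$ ``can always be strictly increased on some unit.'' This is false: every unit of $\X$ has absolute value \emph{constant} on the fibers of $\tau$. Over chamber interiors this is exactly the content of Lemma \ref{affine}: on each multiannulus inside $\tau^{-1}(\Delta^\circ)$ an invertible function has the form $\alpha\prod t_{i,j}^{n_{i,j}}(1+h)$ with $|h|<1$, so $|u|$ depends only on the numbers $|t_{i,j}|$, i.e.\ only on $\tau(x)$. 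Hence units cannot distinguish $x$ from $\tau(x)$, and with respect to units every point of a fiber is ``maximal.'' The paper instead uses the partial order defined by \emph{all} analytic functions: $\tau^{-1}(y)$ is realized as a Laurent domain inside a polydisc whose Gauss point is $y$, whence $y$ dominates every point of the fiber (alternatively, your observation that $\rho_t(f)\geq\rho_0(f)$ along Berkovich's retraction gives this for every $f$, not just units); distinctness of seminorms then gives strict domination, and separation of points in distinct fibers is reduced to Berkovich's $r=1$ result. With all functions the transport is still formal: $\phi$ preserves the order, hence the set of maximal points, which is $\B$, and $\tau(x)$ is order-theoretically the unique maximal point above $x$, which gives part (3). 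Note that your fiber-preservation step has the same defect in miniature: knowing the fibers are the level sets of the \emph{linear-fractional} units does not show $\phi$ preserves them, since the pullback of a linear-fractional unit is merely some unit; you need the fibers to be the common level sets of \emph{all} units, which is again the constancy statement above.

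For part (2) the proposal is too thin at the decisive point. Saying that walls and faces are ``cut out by conditions $|T_{i,j}/T_{i,j'}|=|\pi_i|^m$'' and that $\bar\phi$ ``preserves this structure'' tacitly assumes $\bar\phi$ preserves those particular functions, which it does not. What one actually gets from Lemma \ref{affine} is only that $\log|t_{i,j}\circ\phi|$ is affine with \emph{integral} linear part on each chamber interior; one must still rule out that a chamber interior is mapped into a lower-dimensional face, and the paper does this by a genuinely topological argument (a small ball minus an affine hyperplane has two components, whereas a neighbourhood of a codimension-one face of $\B$ minus that face has at least three, because at least three chambers meet along it). Likewise, length preservation does not follow from ``$\bar\phi$ preserves all these increments'' --- it does not preserve them; the correct argument is that the coordinate increments of the image edge are an integral matrix applied to those of the source edge, which only yields $l(\phi(e))\geq l(e)$, and one must apply the same reasoning to $\phi^{-1}$ to obtain equality. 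Finally, your fallback of identifying $\B\cap\X[n]$ with the Shilov boundary of $\X[n]$ cannot work either: the Shilov boundary of a strict affinoid is finite, while $\B\cap\X[n]$ is a union of polysimplices.
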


\begin{proof}
We have a partial ordering $\leq$ on any analytic space $X$ defined as follows: $x \leq y$ if for any analytic function $f$ on $X$ we have $|f(x)|\leq |f(y)|$. A point $x\in X$ is called \emph{maximal} if $y\geq x$ holds only for $y=x$. $x$ is called a \emph{global maximum} if $x \geq y$ for all $y\in X$. Note that if an affinoid $X=M(\AF)$ has a global maximum, then it is unique.

We will show that $\B$, as a subset of $\X$, is characterized as the set of maximal points of $\X$, and thus, is preserved under any analytic automorphism of $\X$. We start with a few lemmas:

\begin{lemma}\label{unimax} Let $X=M(\AF)$ be an affinoid domain over $K$ which has a point $x$ which is a global maximum. Let $f$ be an element of $\AF$ such that $r:=|f(x)|>0$. Let $Y \subseteq X$ be the Laurent domain $\{y\in X : |f(y)|=r\}$. Then $Y$ is an affinoid domain in which $x$ is a global maximum. \end{lemma}

\begin{proof} $Y$, and its embedding in $X$, are given by $Y=M(B)$ where $B=\AF\{r^{-1}X,rY\}/\langle X-f,Yf-1 \rangle$ (see  \cite{ber2}, 2.2.2, and \cite{bgr}, 6.1.4). Since $|f|_{\sup} \geq |f(x)| =r$, by \cite{ber2}, corollary 2.1.5, we can construct an isomorphism $B\cong \AF \{rY\}/\langle Yf-1 \rangle$. In other words, $Y= \{ y\in X : |f(y)|\geq r\}$. By \cite{ber3}, lemma 5.1(i), the restriction of the partial order $\leq$ on $X$ to $Y$ coincides with the partial order on $Y$. Since $x\in Y$, $x$ is also the global maximum of $Y$. \end{proof}

\begin{lemma} \begin{enumerate}
\item
For all $x\in \B$, $\tau(x) \geq x$.
\item
If $x,y\in \X$ and $\tau(x) \neq \tau(y)$ then none of the relations $x \leq y$, $y \leq x$ hold.
\end{enumerate}
\end{lemma}

\begin{proof}
\begin{enumerate}
\item
Let  $y=\tau(x)$. Since $G$ acts transitively on the set of apartments of $\B$, we can assume, without loss of generality, that $y$ lies in the basic apartment $\Lambda$. The preimage of $y$ under $\tau$ is the set of $z\in \X$ such that for all $i$, and for all $a_0,..,a_{d_i}\in k_i$, we have $|\sum_{j=0}^{d_i} a_j t_{i,j}(z)|=|\sum_{j=0}^{d_i} a_j t_{i,j}(y)|$. Also, since $y\in \Lambda$, we have $|\sum_j a_j t_{i,j}(y)|=\max_j |a_j t_{i,j}(y)|$ for all such $a_j$'s. Moreover, if $z \in \prod_i  \A^{d_i}_K$ and $|\sum_{j=0}^{d_i} a_j t_{i,j}(z)|=\max_j |a_j t_{i,j}(y)|$ holds for all $i$ and $a_0,..,a_{d_i}\in k_i$, then $z \in \X$. All together, we get 
\begin{equation}
\label{taueq}
\tau^{-1}(y)= \{z \in \prod_i  \A^{d_i}_K \, | \, |\sum_j a_j t_{i,j}(z)|=\max_j |a_j t_{i,j}(y)| \text{ } \forall i,  a_0,..,a_{d_i}\in k_i\}
\end{equation}
This set of conditions may be replaced by a finite set of conditions in the following way: For every $i$ and a real number $|\pi_i| < b \leq 1$, let 

$$J_{i,b}= \{j : |t_{i,j}(y)| \equiv b \mod |\pi_i|^{\Z} \}$$

Obviously, this set is nonempty only for a finite number of $b$'s. For such $b$, let $n_{i,j}$ be so that $|t_{i,j}(y) \pi_i^{n_{i,j}}| = b$. Also, let $C_i\subset O_{k_i}$ be a finite set of representatives for the residue classes mod $\pi_i$, such that $0\in C_i$. Then our condition is that for all $i$,$b$, and all possible functions $f:J_{i,b} \rightarrow C_i$ which are not identically 0, we have $|\sum_{j\in J_{i,b}} f(j) t_{i,j}(z) \pi_i^{n_{i,j}}|=b$. This finite set of conditions is equivalent to the infinite set of conditions in (\ref{taueq}).

Let $U$ be the polydisc
$$ U=\{z \in \prod_i \A^{d_i}_K \, | \, \forall i,j,\,|t_{i,j}(z)| \leq |t_{i,j}(y)| \} = M(K\{|t_{i,j}(y)|^{-1}t_{i,j}\}) $$
then $\tau^{-1}(y)$ is the Laurent domain inside $U$, defined by the finite set of conditions described above. 
The image of $y$ in $U$ is the Gauss norm $$g(\sum_{N=(n_{i,j})} a_N \prod_{i,j} t_{i,j}^{n_{i,j}})= \max_N |a_N| \prod_{i,j} |t_{i,j}(y)|^{n_{i,j}}$$ which is the global maximum of $U$. Applying lemma \ref{unimax} successively, we get that $y$ is the global maximum of $\tau^{-1}(y)$. In particular, we have $y \geq x$.

\item
The claim follows from the case $r=1$, proved in \cite{ber}. If $\tau(x) \neq \tau(y)$ then there is an index $i$ such that $\pi_{\B_i}(\tau(x)) \neq \pi_{\B_i}(\tau(y))$. By the case $r=1$, there exist analytic functions $f,g$ on $\X_i$ such that $|f(\pi_{\X_i}(x))| > |g(\pi_{\X_i}(x))|$ and  $|f(\pi_{\X_i}(y))| < |g(\pi_{\X_i}(y))|$. Pulling $f$ and $g$ back to $\X$, we conclude that there is no order relation between $x$ and $y$. \qedhere
\end{enumerate}
\end{proof}

By the above lemma, $\phi(\B)=\B$, and the induced automorphism on $\mathcal{B}$ is continuous and commutes with $\tau$. Let us show that this automorphism is polysimplicial. We start with a lemma:

\begin{lemma} \label{affine}
Let $f$ be a nowhere vanishing analytic function on $\tau^{-1}(\Delta^\circ)$. Then $\log |f|$, when restricted to $\Delta^\circ$, is an affine function, whose linear part has integer coefficients. 
\end{lemma} \begin{proof}
$\tau^{-1}(\Delta_i^\circ)$ is the analytic subspace of $\prod_i  \A^{d_i}_K$ given by the inequalities $ 1 > |t_{i,1}(x)| > |t_{i,2}(x)| > ... > |t_{i,d_i}(x)| > |\pi_i|$ for $1 \leq i \leq r$.
There is a cover of $\Delta^{\circ}$ by sets of the form $\{ x\in \Delta^\circ: \forall i,j,\text{ } a_{i,j} \leq |t_{i,j}(x)| \leq b_{i,j}\}$ (for some real numbers $a_{i,j},b_{i,j}$) and a corresponding cover of $\tau^{-1}(\Delta_i^\circ)$ by multi-annuli. 
On each such multi-annulus, an invertible function is of the form $\alpha \prod_{i,j}t_{i,j}^{n_{i,j}}(1+h)$ where $n_{i,j}\in \Z$ and $|h|<1$ (see \cite{bgr}, 9.7.1). Taking the absolute value, the result follows.
\end{proof}

 Now, let us show that $\phi$ takes chamber interiors to chamber interiors: Assume, to the contrary, that $x$ is a point in the interior of a chamber $C$, and $y=\phi(x)$ lies in an $n$-dimensional face $F$, where $n$ is not the top dimension $d=\prod_i d_i$. Then (by slightly moving the point $x$ if necessary) we can assume that $n=d-1$. It follows from the above lemma that $\phi$ is linear on $C \cap \phi^{-1}(F)$. Hence, there exists a small neighbourhood of $x$ in $C$ such that $\phi^{-1}(F)$ is defined by a linear equation inside this neighbourhood. We may take $U$ to be a small ball around $x$ in $C$, so that $U \setminus \phi^{-1}(F)$ has two connected components. However, $\phi(U)\setminus F$ has more than two connected components, since $F$ connects the chambers which contain it. We get a contradiction. 

Now, since $\phi$ takes chamber interiors to chamber interiors, it takes chambers to chambers. Since $\phi$ is affine on each chamber, and vertices of a chamber are characterized as the only points who do not lie on the interior of an interval which lies in the chamber, $\phi$ takes chamber vertices to chamber vertices. Each chamber is a product of simplices, and two vertices of a chamber are connected by an edge if and only if they come from an edge of one of the simplices of the product. Equivalently, two vertices of a chamber are connected by an edge if and only if the segment connecting them does not contain interior points of the chamber. It follows that $\phi$ takes edges to edges. From here, and lemma \ref{graphaut} below, it follows that $\phi$ is polysimplicial. 

Finally, let us show that $\phi$ is length preserving: let $e$ be an edge of $\B$, and let $f=\phi(e)$. Since $G$ acts on $\B$ by length preserving automorphisms, and by lemma \ref{citebrt}, we may assume without loss of generality that both $e$ and $f$ lie in the apartment $\Lambda$. Let $x_1$ and $x_2$ be the end points of $e$, and $y_n=\phi(x_n) \text{ } (n=1,2)$ the end points of $f$. The edge $e$ comes from an edge in one of the $\Lambda_i$'s. Let us assume that it comes from $\Lambda_{i_0}$. Then $\pi_{\B_{i_0}}(x_1)$ and $\pi_{\B_{i_0}}(x_2)$ correspond to the homothety classes of two lattices $L_1, L_2$ of the form $L_n=\langle \pi_{i_0}^{m_{n,0}}T_{i_0,0}, ..,\pi_i^{m_{n,d_i}}T_{i_0,d_i}\rangle$, such that $\max_j(m_{1,j}-m_{2,j})-\min_j(m_{1,j}-m_{2,j}) = 1$. By formula (\ref{norm_formula}), $\rho_{L_i}(T_{i,j})=|\pi_{i_0}|^{m_{n,j}}$. 
It follows that
\begin{equation*}
\begin{split}
l(e) = - \log(|\pi_{i_0}|)& = \log \Big (\Big(\min_{0 \leq j \leq d_{i_0}} \frac{\rho_{L_1}(T_{i_0,j})}{\rho_{L_2}(T_{i_0,j})} \Big) /  \Big(\max_{0 \leq j \leq d_{i_0}} \frac{\rho_{L_1}(T_{i_0,j})}{\rho_{L_2}(T_{i_0,j})} \Big) \Big )\\
& =\sum_i \log \Big(\Big(\min_{0 \leq j \leq d_i} \frac{|t_{i,j}(x_1)|}{|t_{i,j}(x_2)|}\Big) / \Big (\max_{0 \leq j \leq d_i} \frac{|t_{i,j}(x_1)|}{|t_{i,j}(x_2)|}\Big)\Big) 
\end{split}
\end{equation*}
Hence $l(e)$ is an integral combination of the numbers $(\log |t_{i,j}(x_1)| - \log |t_{i,j}(x_2)|)$. 
Let $v_n$ be the vector $(\log |t_{i,j}(x_n)|)_{i,j}$ and $w_n$ the vector $(\log |t_{i,j}(y_n)|)_{i,j}$ for $n=1,2$.  According to lemma \ref{affine}, there exists a matrix $A$ with integral coefficients and a vector $b$ such that $w_n=Av_n+b$ for $n=1,2$. Hence, $w_1-w_2= A(v_1-v_2)$. Since the vector $v_1-v_2$ contains only $0$ and $l(e)$ as entries, all its entries are integer multiples of $l(e)$. Hence, the same is true for the entries of $w_1-w_2$. Since $w_1-w_2$ is not the zero vector, we get that $l(f)\geq l(e)$. Applying the same argument to $\phi^{-1}$, we get $l(e)\geq l(f)$. Hence $\phi$ is length preserving.

This concludes the proof of proposition \ref{induce}. \end{proof}

% ************************** The action on a single chamber ***********************

\section{The action on a single chamber}
\label{labelling}
We have just seen that an analytic automorphism $\phi\in \Aut_K(\X)$ induces an automorphism of $\B$ with some good properties. We will now explore this polysimplicial automorphism in more detail. By lemma \ref{citebrt}, the image $\phi(\Lambda)$ is an apartment. Also, $G$ acts transitively on the set of apartments of $\B$, and the stabilizer of $\Lambda$ acts transitively on the set of chambers of $\Lambda$. Thus, in proving theorem \ref{T}, it is enough to consider automorphisms $\phi$ which satisfy $\phi(\Lambda) = \Lambda$ and $\phi(\Delta)=\Delta$.

Below, we shall analyze the action of such $\phi$ on $\Delta$. Let us consider the 1-skeleton of $\Delta$. It endows the vertices of $\Delta$ with an undirected graph structure, which is the product of the graph structures on each of the simplices $\Delta_i$: Two vertices  $(\alpha_1,..,\alpha_r)$ and $(\beta_1,..,\beta_r)$ (where $\alpha_i, \beta_i$ are vertices of $\Delta_i$) are connected by an edge if an only if $\alpha_i \neq \beta_i$ for exactly one value of $i$. Thus, the graph of $\Delta$ is isomorphic to $\prod_{i=1}^r [d_i]$, where $[n]$ denotes the complete graph on the set $\{1,2,..,n\}$. Since $\phi$ is polysimplicial, it induces an automorphism of this graph. The automorphism group of this graph is given by the following combinatorial lemma:
\begin{lemma}\label{graphaut} 
\begin{enumerate} \item Let $n_1,n_2,a_1,..,a_{n_1},b_1,..,b_{n_2}$ be natural numbers, and let $f$ be an injective homomorphism of graphs from $\prod_{i=1}^{n_1}[a_i]$ to $\prod_{i=1}^{n_2}[b_i]$. Then, there exists an injective function $\mu: [n_1] \rightarrow [n_2]$, injective functions $g_i:[a_i]\rightarrow[b_{\mu(i)}]$ for all $i\in [n_1]$, and a number $\alpha_j\in [b_j]$ for each $j \in [n_2] \setminus \text{Im}(\mu)$, such that for any $(u_1,..,u_{n_1}) \in \prod_{i=1}^{n_1}[a_i]$ and $j\in [n_2]$, the $j^\text{th}$ coordinate of $f(u_1,..,u_{n_1})$ is $g_i(u_i)$ if $j=\mu(i)$ for some $i$, and is $\alpha_j$ if $j \notin \text{Im}(\mu)$.
\item
Let $f$ be an automorphism of the graph $\prod_{i=1}^r [d_i]$. Then there exists a permutation $\mu$ of $\{1,..,r\}$ such that $d_{\mu(i)}=d_i$ for all $i$, and permutations $p_i$ of $[d_i]$ for all $i$ such that $f((a_1,..,a_r))=(p_1(a_{\mu(1)}),..,p_r(a_{\mu(r)}))$.\end{enumerate}\end{lemma}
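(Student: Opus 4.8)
The plan is to reduce everything to two structural facts about a \emph{Hamming graph} $H=\prod_{j=1}^{m}[b_j]$ (vertices = tuples, adjacency = differing in exactly one coordinate; we may assume every $b_j\ge 2$, a factor $[1]$ being a single point). \textbf{Fact A (cliques).} Every clique of $H$ lies in a single ``line'' $C_j(w):=\{v:v_\ell=w_\ell\text{ for }\ell\neq j\}$, and the maximal cliques are exactly these lines. I would prove this by a one-line induction on the clique size: in a triangle $\{u,v,w\}$, if $u,v$ differ in coordinate $j$ and $u,w$ in coordinate $j'\neq j$ then $v,w$ differ in both coordinates, contradicting adjacency; so all edges of a clique through a common vertex ``point in one coordinate direction'', and the induction finishes it. A consequence is that every edge $e$ of a Hamming graph has a well-defined \emph{type} $\theta(e)$, the unique coordinate in which its endpoints differ, and every edge of a line $C_j(w)$ has type $j$. \textbf{Fact B (squares).} If $p\sim q\sim s\sim t\sim p$ is an \emph{induced} $4$-cycle in $H$, then $\theta(\{p,q\})=\theta(\{s,t\})$ and $\theta(\{q,s\})=\theta(\{t,p\})$, these two types are distinct, and the four vertices agree outside those two coordinates. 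I would prove this by chasing edge-types around the cycle: walking $p\to q\to s$ changes two coordinates $j_1,j_2$; if $j_1=j_2$ then $p$ and $s$ are equal or adjacent, against the cycle being induced, so $j_1\neq j_2$ and $p,s$ differ exactly in $\{j_1,j_2\}$; running this for the four length-two detours around the cycle and matching up the resulting coordinate pairs forces all the assertions.

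For part (1) I would read ``injective homomorphism'' as the map on $1$-skeletons coming from an injective morphism of polysimplicial complexes; such a map also \emph{reflects} adjacency (an injective affine map between polysimplices that carries faces to faces is a combinatorial isomorphism onto its image, so non-adjacent vertices go to non-adjacent vertices) — and some such hypothesis is genuinely needed, since e.g.\ $[2]\times[2]\hookrightarrow[4]$ is an injective graph homomorphism with no product structure. Fix a source coordinate $i$. For each source vertex $u$, the line $C_i(u)$ is a clique, so by Fact A its image lies in a unique target line, whose direction I call $\mu(i,u)$; thus every type-$i$ source edge inside $C_i(u)$ has target-type $\mu(i,u)$. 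The crux is that $\mu(i,u)$ is independent of $u$: if $u,u'$ differ only in a coordinate $i'\neq i$, take a type-$i$ edge in $C_i(u)$ together with its $i'$-translate in $C_i(u')$; these four vertices span a square in the source, hence (since $f$ reflects adjacency) an induced $4$-cycle in the target, and Fact B gives $\mu(i,u)=\mu(i,u')$. Since any two $i$-lines are joined by a chain of single-coordinate moves, $\mu(i,u)$ is a constant $\mu(i)$; and Fact B also shows the two directions of a source square receive \emph{distinct} target-types, so $\mu\colon[n_1]\to[n_2]$ is injective.

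It then remains to recognize the $g_i$ and the $\alpha_j$. For $j=\mu(i)$: changing any coordinate of $u$ other than the $i$-th is a type-$i'$ move with $i'\neq i$, altering $f(u)$ only in coordinate $\mu(i')\neq\mu(i)$; hence $f(u)_{\mu(i)}$ depends only on $u_i$, which defines $g_i$, and $g_i$ is injective because changing $u_i$ itself does alter coordinate $\mu(i)$ of $f(u)$. For $j\notin\text{Im}(\mu)$: changing any single coordinate of $u$ alters $f(u)$ only in some coordinate $\mu(i)\neq j$, so $f(u)_j$ is constant, call it $\alpha_j$; assembling these identities is exactly the conclusion of part (1). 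Part (2) follows at once: an automorphism of $\prod_{i=1}^r[d_i]$ is bijective, hence reflects adjacency, so part (1) applies with $n_1=n_2=r$ and $a_i=b_i=d_i$; the injection $\mu$ is then a permutation (so there are no $\alpha_j$'s), and surjectivity of $f$ forces each $g_i\colon[d_i]\to[d_{\mu(i)}]$ to be onto, whence $d_{\mu(i)}=d_i$ and $g_i$ is a permutation. Writing $\sigma:=\mu^{-1}$ and $p_j:=g_{\sigma(j)}$ puts $f$ in the asserted form $f((a_1,\dots,a_r))=(p_1(a_{\sigma(1)}),\dots,p_r(a_{\sigma(r)}))$.

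The main obstacle is the well-definedness of $\mu(i)$ — that the target-type of a type-$i$ source edge does not depend on which such edge one picks — and Fact B about induced $4$-cycles is precisely the tool designed to settle it; everything else is bookkeeping. A secondary but essential point to flag is that one really needs $f$ to \emph{reflect}, not merely preserve, adjacency: this is automatic in part (2), and in the intended application (Proposition \ref{induce}) it holds because the automorphism, restricted to a chamber, is a homeomorphism onto a chamber.
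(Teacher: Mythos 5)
Your argument is correct, and the comparison with ``the paper's proof'' is necessarily one-sided: the paper does not prove part (1) at all, it defers to \cite{ber3}, Lemma 3.1, and derives part (2) from it in one line. Your clique/induced-square argument is therefore a self-contained substitute for that reference. The two structural facts you isolate (maximal cliques of a Hamming graph are the coordinate lines, so every edge has a well-defined type; an induced $4$-cycle has opposite edges of equal type and adjacent edges of distinct types) do carry the whole proof: Fact B is exactly what makes the direction map $\mu(i)$ well defined and injective, and the identification of the $g_i$'s and the constants $\alpha_j$ is then routine. Part (2) follows as you say.

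Your remark about hypotheses is not a quibble but a genuine correction to the statement as transcribed here: the identity map from $[2]\times[2]$ (a $4$-cycle) onto the vertex set of $[4]$ (the complete graph $K_4$) is an injective graph homomorphism admitting no injective $\mu:[2]\rightarrow[1]$, so part (1) is false as literally written. One must additionally assume that $f$ reflects adjacency (equivalently, is an isomorphism onto an induced subgraph); this is precisely what your proof uses at the two invocations of Fact B, and it does hold in every application made in the paper (restrictions of the building automorphism to chambers in Proposition \ref{induce}, face inclusions and chambered maps around Proposition \ref{extend}). Two degenerate points are still worth flagging. First, if some $a_i=1$ then $C_i(u)$ is a single vertex and your recipe does not define $\mu(i,u)$; such coordinates must be handled separately by assigning them unused target indices carrying constant values, which is possible only if $n_1\le n_2$ after matching the nontrivial coordinates --- and indeed the lemma fails outright for $n_1>n_2$ with singleton factors. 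Second, in part (2) the claim that a bijective self-homomorphism of a finite graph reflects adjacency deserves the one-line justification that it permutes the finite edge set, so its inverse is also a homomorphism. Neither point affects the paper's applications, where all relevant factors have at least two vertices.
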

\begin{proof} For the first claim, see \cite{ber3}, lemma 3.1. The second claim follows immediately from the first one. \end{proof}

We conclude from the second part of the lemma that the action of $\phi$ on $\Delta$ may be factored as an action on each $\Delta_i$ by a permutation, and a permutation of the factors $\Delta_i$. It would make the proof of theorem \ref{T} easier if we could assume (after modifying $\phi$ by a suitable element of $G$) that the action on each $\Delta_i$ is trivial. However, not every automorphism of $\Delta_i$ is induced by an element of $G_i$. In fact, some automorphisms of $\Delta_i$ are not even induced by a polysimplicial automorphism of $\B_i$. As we shall see, these questions are best handled by analyzing the interaction between $\phi$ and the \emph{labelling} of the building. 

\begin{definition} A labelling of a polysimplicial complex $A$ is a map from the set of vertices of $A$ to the set of vertices of a single closed polysimplex $F$, which extends to a polysimplicial chambered map from $A$ to $F$. 
\end{definition}

\begin{definition} \label{def:label}
\begin{enumerate}
\item
For any $1\leq i \leq r$, let $C_i$ be the labelling of $\B_i$, with values in the set of residues $\Z / d_i \Z$ (which is regarded as a combinatorial simplex), defined in the following way: let $v_i$ be the normalized discrete valuation on $k_i^*$ (so that $v_i(k_i^*)=\Z$), then the label of a vertex $[L]\in \B_i$ is the residue class of $v_i(\det(u_1,..,u_{d_i}))$, where $(u_1,..,u_{d_i})$ is an $O_{k_i}$-basis of $L$.
\item
Let $C$ be the labelling of $\B$ which is the product of the labellings $C_i$, with values in $\prod_i(\Z/d_i\Z)$.
\end{enumerate}
\end{definition}

It is easy to see that $C_i$ and $C$ are indeed labellings.

The next lemma, on the uniqueness of labelling, is a polysimplicial version of the proposition in \cite{gar}, 4.4.

\begin{lemma}
\label{lem:label}
Let $C_1,C_2$ be two labellings of a polysimplicial complex $A$, and let $\Phi$ be a chamber of $A$ such that $C_1|_\Phi = C_2|_\Phi$. Then $C_1=C_2$.
\end{lemma}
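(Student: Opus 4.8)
The plan is to argue by a connectedness (gallery) argument, propagating the equality of labellings from the given chamber $\Phi$ to all chambers of $A$, and then deducing equality on all vertices. Since any two faces of a polysimplicial complex are joined by a gallery of chambers in which consecutive terms share a facet, it suffices to show: if $\Psi$ and $\Psi'$ are two chambers sharing a facet and $C_1|_\Psi = C_2|_\Psi$, then $C_1|_{\Psi'} = C_2|_{\Psi'}$. Once this is established, induction on the length of a gallery from $\Phi$ to an arbitrary chamber $\Psi$ gives $C_1|_\Psi = C_2|_\Psi$ for every chamber $\Psi$; and since every vertex lies in some chamber, this yields $C_1 = C_2$.

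First I would set up the local picture at a shared facet. Let $\Psi \cap \Psi' = \Theta$ be the common facet, a closed face of codimension $1$. Because $A$ is polysimplicial, a chamber is a product of simplices $\prod_s S_s$, and a codimension-$1$ face is obtained by dropping exactly one vertex from exactly one factor, say the factor $S_{s_0}$. Thus $\Psi$ has exactly one vertex $w$ not in $\Theta$, $\Psi'$ has exactly one vertex $w'$ not in $\Theta$, and $w \neq w'$ are the two "missing" vertices completing $\Theta$ inside the two chambers; all other vertices of $\Psi$ and $\Psi'$ coincide (they are the vertices of $\Theta$). So the task reduces to showing that $C_1(w') = C_2(w')$, given that $C_1$ and $C_2$ agree on all vertices of $\Theta$ and on $w$.

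The key point is that a labelling, restricted to a chamber, is a \emph{bijection} onto the vertex set of the model polysimplex $F$ (this is exactly the content of "chambered": it carries each chamber isomorphically onto $F$, and $F$ itself is a chamber, i.e. a single polysimplex whose vertices are in bijection with those of any chamber of $A$). Hence for either labelling, the label of the missing vertex $w'$ of $\Psi'$ is the unique vertex of $F$ \emph{not} in the set of labels of the vertices of $\Theta$ — more precisely, within the factor of $F$ corresponding to $S_{s_0}$, the label of $w'$ is the unique model-vertex of that factor not used by $\Theta$. Since $C_1$ and $C_2$ agree on all of $\Theta$, they assign the same set of labels to $\Theta$, hence the same "leftover" label in the relevant factor; therefore $C_1(w') = C_2(w')$. (One should also note $C_1$ and $C_2$ take values in the \emph{same} model polysimplex $F$, up to the obvious identification, because they agree on the chamber $\Phi$, which already determines $F$ as the image.) This gives the inductive step, and the gallery/connectedness argument finishes the proof.

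The main obstacle is purely bookkeeping: making precise that in a polysimplicial (product-of-simplices) chamber, two adjacent chambers sharing a facet differ by "swapping one vertex in one simplicial factor," and that a chambered polysimplicial map is a coordinatewise simplicial isomorphism on each chamber, so that the "unique leftover label" description of the missing vertex is valid factor-by-factor. Once that combinatorial structure is pinned down, each step is immediate; there is no analytic input here, and the result is the stated polysimplicial analogue of the uniqueness-of-labelling statement in \cite{gar}, 4.4.
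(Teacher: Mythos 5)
Your overall strategy---reduce by a gallery to the case of two chambers sharing a facet, and show that the labels on the facet determine the labels on the whole adjacent chamber---is exactly the paper's. But your execution of the adjacent-chamber step contains an error that is specific to the \emph{poly}simplicial setting. You assert that if $\Psi$ and $\Psi'$ share the facet $\Theta$, then $\Psi'$ has exactly one vertex $w'$ not in $\Theta$, whose label is then forced to be ``the unique leftover.'' That is true for simplicial complexes (and is precisely the argument of the proposition in \cite{gar}, 4.4), but false for polysimplices: writing $\Psi'=\prod_s S_s$ and $\Theta=\prod_{s\neq s_0}S_s\times F_{s_0}$ with $F_{s_0}$ a facet of the simplex $S_{s_0}$, the vertex set of a product of simplices is the \emph{product} of the vertex sets, so $\Psi'$ has $\prod_{s\neq s_0}\#\mathrm{ver}(S_s)$ vertices outside $\Theta$, not one. (Already for two squares glued along an edge, each square has two vertices off the shared edge.) Hence the labels on $\Theta$ do not single out a unique leftover label for a unique missing vertex, and the argument as written does not close.

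The step can be repaired, but the repair is the real content of the lemma in the polysimplicial case, not bookkeeping: one needs that a chambered polysimplicial isomorphism of a polysimplex is, up to a permutation of the simplicial factors, a product of simplicial isomorphisms (part 2 of lemma \ref{graphaut}), and then that any such automorphism of $\prod_{i}[u_i]$ which is the identity on $[u_1]\times\cdots\times([u_{i_0}]\setminus\{j\})\times\cdots\times[u_l]$ is the identity everywhere. This is exactly how the paper argues, applying that claim (in effect) to the automorphism $(C_2|_{\Psi'})^{-1}\circ C_1|_{\Psi'}$. You do gesture at the coordinatewise structure in your closing paragraph, but the proof you actually wrote leans on the false ``one missing vertex'' picture; replace the unique-leftover argument by an appeal to lemma \ref{graphaut}(2), or prove the product-graph claim directly (adjacency in the $1$-skeleton pins down the labels of the missing vertices coordinate by coordinate, as one sees in the two-squares example).
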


\begin{proof}
Since any chamber of $A$ can be connected to $\Delta$ by a gallery, it is enough to prove that if $\Gamma_1$ and $\Gamma_2$ are adjacent chambers, and $C_1=C_2$ on $\Gamma_1$, then $C_1=C_2$ on $\Gamma_2$. This follows from the following claim: Given integers $l\geq 1$ and $u_1,..,u_l>1$, if $f$ is an automorphism of $\prod_{i=1}^l [u_i]$ such that $f$ is the identity on the set $[u_1]\times ... \times \underset{i_0}{([u_{i_0}] \setminus \{j \})} \times ... \times [u_l]$ for some $1\leq i_0 \leq l$ and $1\leq j \leq u_{i_0}$, then $f$ is the identity on $\prod_{i=1}^l [u_i]$. This claim follows from part 2 of lemma \ref{graphaut}.
\end{proof}

In a similar way, we prove a polysimplicial version of the uniqueness lemma in \cite{gar}, 3.2:

\begin{lemma} \label{aptaut}
Let $A$ and $B$ be two polysimplicial complexes, such that every facet of $B$ is contained in at most two chambers. Let $\Psi$ be a chamber of $A$ and $f,g:A\rightarrow B$ chambered, injective polysimplicial maps such that $f|_\Psi = g|_\Psi$. Then $f=g$.
\end{lemma}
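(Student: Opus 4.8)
The plan is to mimic the proof of Lemma \ref{lem:label}, using a gallery argument and reducing the induction step to a purely local statement about automorphisms of a product of complete graphs. First I would note that since $f$ and $g$ are chambered polysimplicial maps, they carry $A$ into $B$ in a way that is determined on each chamber by the induced graph automorphism on its $1$-skeleton, so it suffices to show that $f$ and $g$ agree on every vertex of $A$; equivalently, that $f = g$ on every chamber of $A$.

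Since any two chambers of $A$ are joined by a gallery (axiom (4) for polysimplicial complexes), it is enough to prove the following step: if $\Gamma_1$ and $\Gamma_2$ are chambers of $A$ sharing a common facet $\Phi$, and $f|_{\Gamma_1} = g|_{\Gamma_1}$, then $f|_{\Gamma_2} = g|_{\Gamma_2}$. Here the hypothesis that every facet of $B$ is contained in at most two chambers enters: because $f$ and $g$ are chambered and injective, $f(\Gamma_2)$ and $g(\Gamma_2)$ are both chambers of $B$ containing the common facet $f(\Phi) = g(\Phi)$ (the images agree on $\Phi \subseteq \Gamma_1$), and they are distinct from $f(\Gamma_1) = g(\Gamma_1)$; hence each of $f(\Gamma_2), g(\Gamma_2)$ must be the unique other chamber of $B$ containing that facet, so $f(\Gamma_2) = g(\Gamma_2)$ as chambers. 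It remains to see that $f$ and $g$ agree \emph{vertex by vertex} on $\Gamma_2$, not merely as sets. Identify $\Gamma_2 \cong \prod_{i=1}^l [u_i]$ and its image chamber likewise; then $h := (g|_{\Gamma_2})^{-1} \circ (f|_{\Gamma_2})$ is an automorphism of $\prod_{i=1}^l[u_i]$ which, by the hypothesis on $\Phi$, restricts to the identity on the facet $\Phi$, i.e.\ on a subset of the form $[u_1] \times \cdots \times ([u_{i_0}] \setminus \{j\}) \times \cdots \times [u_l]$. By part 2 of Lemma \ref{graphaut} (exactly the combinatorial claim already used in the proof of Lemma \ref{lem:label}), such an $h$ is the identity, so $f|_{\Gamma_2} = g|_{\Gamma_2}$.

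The main obstacle is the bookkeeping needed to legitimately pass from ``$f(\Gamma_2)$ and $g(\Gamma_2)$ are the same chamber of $B$'' to ``$f$ and $g$ induce the same bijection on the vertices of $\Gamma_2$''; this is where one must invoke injectivity of $f,g$ together with the classification in Lemma \ref{graphaut} and cannot simply argue set-theoretically. Everything else is a routine gallery induction, with the base case supplied by the hypothesis $f|_\Psi = g|_\Psi$. I would present the argument in this order: reduce to the adjacent-chamber step; use the ``at most two chambers'' hypothesis to pin down the image chamber; then apply Lemma \ref{graphaut}(2) to upgrade equality of image chambers to equality of the vertex maps; then conclude by connectedness via galleries.
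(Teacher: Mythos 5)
Your proof is correct and follows essentially the same route as the paper's: a gallery induction reducing to adjacent chambers, the ``at most two chambers per facet'' hypothesis together with injectivity to identify the image chambers, and then the combinatorial claim from the proof of Lemma \ref{lem:label} (via Lemma \ref{graphaut}) to upgrade equality of image chambers to equality of the vertex maps. No substantive differences.
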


\begin{proof}
As in the proof of lemma \ref{lem:label}, it is enough to prove that if $\Gamma_1$ and $\Gamma_2$ are adjacent chambers, and $f=g$ on $\Gamma_1$, then $f=g$ on $\Gamma_2$. Let $\Phi_i=f(\Gamma_i)$ for $i=1,2$. Then $\Phi_1$ and $\Phi_2$ are adjacent, and $g(\Gamma_1)=\Phi_1$. Let $F=\Gamma_1 \cap \Gamma_2$ and  $G=\Phi_1 \cap \Phi_2$. Then $F$ and $G$ are facets and $g(\Gamma_1)\cap g(\Gamma_2)=g(F)=f(F)=G$. Hence, by the assumption on $B$, $g(\Gamma_2)=\Phi_2=f(\Gamma_2)$. We have $f=g$ on $F$, hence $fg^{-1}$ is an automorphism of $\Gamma_2$ which is the identity on $F$. By the claim stated in the proof of lemma \ref{lem:label}, $fg^{-1}$ is the identity on $\Gamma_2$. Hence $f=g$ on $\Gamma_2$.
\end{proof}

The labelling $C$ (see definition \ref{def:label}) induces a bijection between the vertices of $\Delta$ and the set $\prod_i(\Z/d_i\Z)$. Let $D: \prod_i(\Z/d_i\Z) \rightarrow \Delta$ be the inverse mapping to $C|_{\Delta}$.  

\begin{lemma} \label{lem:color}
Let $\phi$ be a polysimplicial automorphism of $\B$ such that $\phi(\Delta)=\Delta$. Then, 
\begin{enumerate}
\item
$C \circ \phi = C \circ \phi \circ D \circ C$.
\item
There exist a unique permutation $\mu$ of $\{1,2,..,r\}$ and unique permutations $p_i$ of $\Z/d_i\Z$ for $1\leq i \leq r$, such that $d_{\mu(i)}=d_i$  for all $i$, and $(C \circ \phi \circ D)((a_1,..,a_r))=(p_1(a_{\mu(1)}),p_2(a_{\mu(2)}),..,p_r(a_{\mu(r)}))$ for any $(a_1,..,a_r) \in \prod_i(\Z/d_i\Z)$.
\item
Each permutation $p_i$ is either of the form $t \mapsto a_i+t$ or $t \mapsto a_i-t$ for some constant $a_i\in \Z/d_i\Z$.
\end{enumerate}
\end{lemma}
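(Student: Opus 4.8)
The plan is to prove the three claims in sequence, bootstrapping each from the previous one, and to lean heavily on the combinatorial Lemma~\ref{graphaut} and the uniqueness Lemma~\ref{lem:label}. For part (1), observe that $D\circ C$ restricted to $\Delta$ is the identity on the vertices of $\Delta$ (it is defined as the inverse of $C|_\Delta$), so $C\circ\phi\circ D\circ C$ and $C\circ\phi$ agree on all vertices of $\Delta$; since $\phi(\Delta)=\Delta$ and both sides are determined by their values on these vertices, they are equal as maps on the vertices of $\Delta$. This is essentially a tautology once one unwinds that $D$ is a right inverse to the restricted labelling.

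For part (2), I would apply the second part of Lemma~\ref{graphaut} to the graph automorphism of $\prod_i[d_i]$ induced by $\phi$ on the $1$-skeleton of $\Delta$, after identifying the vertices of $\Delta_i$ with $\Z/d_i\Z$ via $C_i|_{\Delta_i}$. Lemma~\ref{graphaut}(2) directly yields a permutation $\mu$ with $d_{\mu(i)}=d_i$ and permutations $p_i$ of $\Z/d_i\Z$ with $(C\circ\phi\circ D)((a_1,\dots,a_r))=(p_1(a_{\mu(1)}),\dots,p_r(a_{\mu(r)}))$. Uniqueness of $\mu$ and the $p_i$ follows because the formula determines them: evaluating on vertices that differ in a single coordinate pins down which input coordinate controls which output coordinate (hence $\mu$), and then $p_i$ is read off coordinatewise.

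Part (3) is where the real content lies, and I expect it to be the main obstacle. The point is that $\phi$ does not merely permute the vertices of the single chamber $\Delta$ — it is a \emph{labelling-compatible} structure on all of $\B$ (or at least must be consistent with the existence of the labelling $C$ on $\B$). The strategy is: the map $C\circ\phi$ is itself a labelling of $\B$ (it is $C$ postcomposed with the automorphism $\phi$, which is polysimplicial and chambered, so it sends the labelling $C$ to another labelling), and similarly $C$ is a labelling; by Lemma~\ref{lem:label}, a labelling of $\B$ is determined by its restriction to $\Delta$. The permutation $p_i$ of $\Z/d_i\Z$ governing the behaviour on $\Delta_i$ must therefore extend consistently across every chamber obtained from $\Delta$ by crossing a facet in the $i$-th factor. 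Crossing such a facet corresponds, on labels, to the ``rotation'' step in the standard theory of the $\widetilde A_{d_i}$-building (this is exactly why the labels live in $\Z/d_i\Z$ and not in an unstructured set): the only permutations of $\Z/d_i\Z$ that are compatible with all these facet-crossings are the affine maps $t\mapsto a_i\pm t$. Concretely, I would pick the two chambers $\Delta_i$ and $\Delta_i'$ of $\Lambda_i$ sharing the facet opposite a given vertex, track how $C_i$ changes (the label of the new vertex is forced), and conclude that $p_i$ must commute with the cyclic shift up to sign, which forces $p_i\in\{t\mapsto a_i+t,\ t\mapsto a_i-t\}$; alternatively one invokes that the automorphism group of the Coxeter complex of type $\widetilde A_{d_i}$ acts on the label set (a $d_i$-cycle) as the dihedral group. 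The delicate part is making the ``compatibility across facets'' argument precise in the polysimplicial product setting, but Lemma~\ref{lem:label} and the single-factor claim proved inside its proof (that an automorphism of $\prod[u_i]$ fixing a codimension-one slab is the identity) provide exactly the rigidity needed to localize the argument to one factor at a time.
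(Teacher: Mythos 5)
Your parts (1) and (2) follow the paper's route (Lemma \ref{graphaut} plus the uniqueness of labellings), but note that in part (1) you only conclude equality \emph{on the vertices of $\Delta$}, where the identity $C\circ\phi=C\circ\phi\circ D\circ C$ is indeed a tautology. The actual content of part (1) is that this identity holds on all vertices of $\B$, i.e.\ that the label of $\phi(v)$ depends only on the label of $v$ for \emph{every} vertex $v$ of the building; the paper gets this by observing that $C\circ\phi$ and $C\circ\phi\circ D\circ C$ are both labellings of $\B$ agreeing on $\Delta$ and invoking Lemma \ref{lem:label}. You have the right tool in hand but draw the weaker conclusion.

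For part (3) there is a genuine gap. Your concrete mechanism --- crossing a facet of $\Delta_i$ and ``tracking how $C_i$ changes'' --- produces no constraint on $p_i$: when two chambers share the facet opposite a vertex of label $s$, the two opposite vertices simply carry the same label $s$, and this is compatible with \emph{any} permutation of the label set. The dihedral restriction does not come from label propagation (which is what Lemma \ref{lem:label} encodes and which works for arbitrary $p_i$); it must come from additional structure that $\phi$ preserves. Your fallback, that automorphisms of the Coxeter complex of type $\widetilde A_{d_i}$ act on types through the dihedral group, is a correct known fact, but it is invoked as a black box (its proof needs, e.g., that the type permutation preserves the Coxeter matrix, read off from the number of chambers around codimension-two faces), and you would still have to reduce from the product building $\B$, where $\phi$ may mix factors, to a single apartment of one factor. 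The paper instead argues directly in the thick building by counting: for a vertex $v$, the neighbours whose label differs from $C(v)$ only in the $i$-th coordinate, by $w$, are in bijection with codimension-$w$ subspaces over the residue field, so their number is the Gaussian binomial $\binom{d_i}{w}_{q_i}$; since these numbers determine the pair $\{w,d_i-w\}$ by strict unimodality, $\phi$ must send a label jump of $w$ to a jump of $\pm w$, and taking $w=1$ gives $p_j(t+1)=p_j(t)\pm1$ for all $t$, which together with injectivity forces $p_j(t)=a\pm t$. You would need either to supply this counting argument or to fully justify and adapt the Coxeter-diagram argument; as written, part (3) is not proved.
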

\begin{proof}

Since $C \circ \phi$ and $C \circ \phi \circ D \circ C$ are two labellings which are equal on $\Delta$, the first statement follows from lemma \ref{lem:label}. Since $C\circ \phi \circ D$ is an automorphism of $\prod_i(\Z/d_i\Z)$, by lemma \ref{graphaut}, there exist a permutation $\mu$ of $\{1,2,..,r\}$ and permutations $p_i$ of $\Z/d_i\Z$ for $1\leq i \leq r$, such that $d_{\mu(i)}=d_i$ for all $i$ and $(C \circ \phi \circ D)((a_1,..,a_r))=(p_1(a_{\mu(1)}),p_2(a_{\mu(2)}),..,p_r(a_{\mu(r)}))$ for any $(a_1,..,a_r) \in \prod_i(\Z/d_i\Z)$. This proves the second statement.

For the last statement, let $v$ be a vertex of $\B$, of label $(a_1,..,a_r)$, let $1\leq j \leq d_i$ and let $i=\mu(j)$. Let $A$ be the set of neighbours of $v$ (in the 1-skeleton of the building) whose label is $(a_1,..,a_{i-1},t,a_{i+1},..,a_r)$ for some $t$. For any $w\in A$, we have $C(w)=(a_1,..,a_{i-1},t,a_{i+1},..,a_r)$, hence \begin{eqnarray*} 
C(\phi(w))&=&C(\phi(D(C(w))))=C(\phi(D((a_1,..,a_{i-1},t,a_{i+1},..,a_r))))\\ &=&(p_1(a_{\mu(1)}),..,p_j(t),..,p_r(a_{\mu(r)})),\end{eqnarray*} 

Let $b_k=p_k(a_{\mu(k)})$ for all $k$, and let $B$ be the set of neighbours of $\phi(v)$ whose label is of the form $(b_1,..,b_{j-1},s,b_{j+1},..,b_r)$ for some $s$. We conclude that $\phi$ maps $A$ to $B$ bijectively. A vertex of $A$ labelled $(a_1,..,a_{i-1},t,a_{i+1},..,a_r)$ is mapped to a vertex of $B$ labelled $(b_1,..,b_{j-1},p_j(t),b_{j+1},..,b_r)$. 
Let us count how many vertices in $A$ are of each label: The number of vertices of $A$ of label $(a_1,..,a_i+w,..,a_r)$ is equal to the number of sublattices $L'$ of a lattice $L$ of $O_{k_i}^{d_i}$ which contain $\pi_i L$ and such that the length of the module $L/L'$ over $O_{k_i}$ is $w$. This number is equal to the number of  $\tilde{k_i}$- vector subspaces of $\tilde{k_i}^{d_i}$ which are of codimension $w$, where $\tilde{k_i}$ is the residue field of $k_i$. Let $q_i=\#(\tilde{k_i})$. Then this number (which we denote by $n(q_i,d_i,w)$) is a gaussian $q_i$-binomial number:
 \begin{eqnarray*} n(q_i,d_i,w) = \binom{d_i}{w}_{q_i} &=& \frac{ ({q_i}^{d_i}-1)\cdot... \cdot ({q_i}^{d_i}-{q_i}^{w-1})}{({q_i}^w-1)\cdot .. \cdot ({q_i}^w-{q_i}^{w-1})} = \\
 &=& \frac{({q_i}^{d_i}-1)({q_i}^{{d_i}-1}-1)..({q_i}^{d_i-w+1}-1)}{({q_i}^w-1)({q_i}^{w-1}-1)..({q_i}-1)}.\end{eqnarray*}

Since $n({q_i},d_i,w)=n({q_i},d_i,d_i-w)$ and since these numbers (like the binomial numbers) increase with $w$ for $w=1,2,..,\lfloor(d_i+1)/2 \rfloor$, the set $\{w,d_i-w\}$ is uniquely determined by the number $n({q_i},d_i,w)$. 

The same enumeration holds for the set $B$. We conclude that a vertex of $A$ for which $t=a_i+w$ must be mapped to a vertex in $B$ for which $s=b_j\pm w$. In particular, setting $w=1$, we get that $p_j(t+1)=p_j(t)\pm 1$ for all $t$, and the result follows immediately.
\end{proof}

Let us call an automorphism of $\B_i$ \emph{label reversing} if it induces a permutation of the form $t \mapsto a-t$ on the labels of $\B_i$, and \emph{label preserving} otherwise. By the multiplicativity of the determinant, any element of $G_i$ acts on $\B_i$ by a label preserving automorphism. Let us now construct a label reversing automorphism of each $\B_i$. Let us fix on each $k_i$-vector space $V_i$ a nondegenerate bilinear form: $\langle \sum_j a_{i,j} T_{i,j}, \sum_j b_{i,j} T_{i,j} \rangle = \sum_j a_{i,j} b_{i,j}$. For a lattice $L$ of $V_i$, let $L^*$ be the dual of $L$: $$ L^* = \{ v\in V_i: \langle v,w\rangle \in O_{k_i} \text{ } \forall w\in L\} $$
Let us define an automorphism $\lambda_i$ of $\B_i$ by $\lambda_i([L])=[L^*]$. 
\begin{lemma} \label{involution}
\begin{enumerate}
\item $\lambda_i$ is a simplicial automorphism of $\B_i$.
\item $\lambda_i \circ \lambda_i = id$.
\item For $v=[\langle \pi_i^{n_0}T_{i,0}, .. ,\pi_i^{n_{d_i}}T_{i,d_i}\rangle] \in \Lambda_i$, we have $\lambda_i(v)=[\langle\pi_i^{-n_0}T_{i,0}, .. ,\pi_i^{-n_{d_i}}T_{i,d_i}\rangle]$. In other words, $\lambda_i$ acts on $\Lambda_i$ as the symmetry with respect to the origin $\langle T_{i,0},..,T_{i,d_i}\rangle$. 
\item $\lambda_i$ is label reversing. 
\item For a norm $\rho$ on $V_i$, we have $\lambda_i(\rho)=\rho^*$ where $ \rho^*(v)=\max_{0 \neq w\in V }\frac{|\langle v,w\rangle|}{\rho(w)}$.

\end{enumerate} \end{lemma}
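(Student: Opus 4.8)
The plan is to verify each of the five assertions directly from the definitions of the dual lattice and the duality of norms, handling the combinatorial statements (1)--(4) via an explicit computation in a fixed apartment, and then deducing the norm-theoretic statement (5) from (3) by continuity and equivariance. Throughout, $i$ is fixed, so write $k=k_i$, $V=V_i$, $\pi=\pi_i$, $\langle\cdot,\cdot\rangle$ for the given form, and $(e_0,\dots,e_d)$ for the standard basis $(T_{i,0},\dots,T_{i,d_i})$, which is self-dual with respect to $\langle\cdot,\cdot\rangle$.

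First I would record the basic algebraic facts about duality of lattices. For a lattice $L$ in $V$, the dual $L^*$ is again a lattice (it is an $O_k$-module, it is finitely generated because $L\supseteq \pi^N(\bigoplus O_k e_j)$ for some $N$ forces $L^*\subseteq \pi^{-N}(\bigoplus O_k e_j)$, and it spans $V$ since $\langle\cdot,\cdot\rangle$ is nondegenerate). Moreover $(\alpha L)^* = \alpha^{-1}L^*$, so $[L]\mapsto[L^*]$ is well defined on homothety classes; this gives the map $\lambda_i$. The double-dual identity $L^{**}=L$ is standard for nondegenerate forms over a complete discrete valuation ring (choose a basis adapted to $L$ relative to the self-dual lattice $\bigoplus O_k e_j$ via the elementary divisor theorem, and compute), which gives $\lambda_i\circ\lambda_i=\mathrm{id}$, assertion (2). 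For (1): duality is inclusion-reversing on lattices, $L_1\subseteq L_2\Rightarrow L_2^*\subseteq L_1^*$, and it also reverses the relation $L_0\supseteq L_1\supseteq\cdots\supseteq L_k\supseteq\pi L_0$ since dualizing turns it into $L_0^*\subseteq\cdots\subseteq L_k^*\subseteq\pi^{-1}L_0^*$, i.e.\ after renumbering and rescaling by $\pi$ it is again a chain defining a face. Hence $\lambda_i$ sends faces to faces bijectively and is a simplicial automorphism.

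Next I would compute $\lambda_i$ on the basic apartment. For $L=\langle\pi^{n_0}e_0,\dots,\pi^{n_d}e_d\rangle$, an element $v=\sum c_j e_j$ lies in $L^*$ iff $\langle v,\pi^{n_j}e_j\rangle=\pi^{n_j}c_j\in O_k$ for all $j$, i.e.\ iff $v_k(c_j)\geq -n_j$, i.e.\ iff $v\in\langle\pi^{-n_0}e_0,\dots,\pi^{-n_d}e_d\rangle$. This is exactly (3), and it exhibits $\lambda_i$ on $\Lambda_i$ as $(n_0,\dots,n_d)\mapsto(-n_0,\dots,-n_d)$, the point reflection through the base vertex. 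For (4): the label of $[L]$ is $v_k(\det(u_1,\dots,u_d))\bmod d_i$ for an $O_k$-basis $(u_j)$ of $L$; taking $L$ as above (rescaled so that $\sum n_j$ ranges over all residues) one has $\det=\prod\pi^{n_j}$ with valuation $\sum n_j$, while $L^*$ has valuation $-\sum n_j$, so $\lambda_i$ acts on labels by $t\mapsto -t$, hence is label reversing; by Lemma~\ref{lem:color}(3) (or simply because every vertex lies in some apartment and $\lambda_i$ is $G_i$-conjugate to its restriction to $\Lambda_i$ in the relevant sense) this local computation determines the label action globally.

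Finally, for (5) I would pass from lattices to norms via formula (\ref{norm_formula}). Define $\rho^*(v)=\sup_{0\neq w}|\langle v,w\rangle|/\rho(w)$; one checks directly that $\rho^*$ satisfies the three norm axioms (the ultrametric triangle inequality and homogeneity are immediate; positivity and finiteness follow from nondegeneracy together with the fact that $\rho$ is equivalent to any fixed apartment norm up to bounded constants). For a lattice norm $\rho=\rho_L$ one verifies $\rho_L^* = \rho_{L^*}$ by unwinding both sides: $\rho_L(w)\le 1\iff w\in L$, so $\sup_w|\langle v,w\rangle|/\rho_L(w)\le 1\iff \langle v,w\rangle\in O_k$ for all $w\in L\iff v\in L^*$, and a scaling argument upgrades this to equality of the norms, not just of their unit balls. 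Thus $\lambda_i(\rho)=\rho^*$ holds on the dense set of lattice classes; since both $\rho\mapsto\lambda_i(\rho)$ and $\rho\mapsto\rho^*$ are continuous on each apartment (the latter because on $A(e_0,\dots,e_d)$ it is the explicit map $(c_j)\mapsto(c_j^{-1})$ from assertion (3)) and the lattice classes are dense in $\B_i$, they agree everywhere.

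The step I expect to be the main obstacle is the clean verification that $\rho^*$ is genuinely a norm and that $\rho\mapsto\rho^*$ is continuous for an arbitrary (not apartment-diagonal) norm $\rho$: one must control the supremum defining $\rho^*$ uniformly, which is where the local-field hypothesis and the boundedness of $\rho$ between two scalar multiples of a fixed apartment norm are used. Everything else is a routine unwinding of duality for lattices over a complete DVR.
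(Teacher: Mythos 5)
Your treatment of parts (1)--(4) is correct and essentially identical to the paper's: duality reverses the chain $L_0\supset\cdots\supset L_k\supset\pi L_0$ into a chain of the same type, $L^{**}=L$ gives the involution property, the diagonal computation gives (3), and the determinant computation gives (4). The paper disposes of these in the same way (it even calls (3) ``trivial'').

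Part (5), however, has a genuine gap. Your extension argument rests on the claim that ``the lattice classes are dense in $\B_i$,'' and this is false: the lattice classes are exactly the vertices of the building, which form a discrete subset of its Euclidean realization (e.g.\ the image of $\Z^{d+1}$ in $\R^{d+1}/\R(1,\dots,1)$ inside an apartment). Agreement of two maps on the vertices plus continuity therefore proves nothing about non-vertex points. Your verification that $\rho_L^*=\rho_{L^*}$ on vertices is correct and matches the paper, but the passage to general $\rho$ needs a different mechanism. What actually works --- and what the paper does --- is to use affineness rather than density: $\lambda_i$ is by construction affine on each closed face, and one checks by direct computation that for a norm $\rho$ belonging to the face $L_0\supset\cdots\supset L_k\supset\pi L_0$ with values $R_0>\cdots>R_k>|\pi_i|R_0$, the dual norm $\rho^*$ is the norm belonging to the dual face $L_k^*\supset\cdots\supset L_0^*\supset\pi_i L_k^*$ with values $1/R_k>\cdots>1/R_0$; since this assignment is the affine extension of the vertex map, the two maps coincide. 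Note also that your fallback justification for continuity of $\rho\mapsto\rho^*$ (``on $A(e_0,\dots,e_d)$ it is $(c_j)\mapsto(c_j^{-1})$'') only covers the single apartment whose basis is orthonormal for $\langle\cdot,\cdot\rangle$; for a general apartment $A(v_0,\dots,v_d)$ the dual norm is diagonal with respect to the dual basis, which lives in a different apartment, so this does not by itself give you control on all of $\B_i$. The face-by-face computation above is the step that cannot be skipped.
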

\begin{proof}
If $\{[L_0],..,[L_k]\}$ is a $k$-dimensional face of the building, then we can rearrange the $L_i$'s and choose the representatives in the homothety classes in such a way that $L_0 \supset L_1 \supset .. \supset L_k
\supset \pi_i L_0$. This implies the inclusions $L_0^* \subset L_1^* \subset .. \subset L_k^* \subset {\pi_i}^{-1}L_0^*$. Hence, $L_k^* \supset L_{k-1}^* \supset .. \supset L_0^* \supset \pi_i L_k^*$ and we get that 
$\phi( \{[L_0],..,[L_k]\} )$ is a $k$-dimensional face as well. This shows the first statement.
The second statement follows from the standard fact that $L^{**}=L$. The third statement is trivial and the fourth one follows from the third. 
As for the last statement, let us first prove it when $[\rho]$ is a vertex of the building. In that case, there exists a lattice $L$ such that 
$\rho(v)=\min\{|c|: 0 \neq c\in k, v/c\in L\}$ for all $v \in V_i \setminus \{0\}$. Since $L^*=\{v: \langle v,w\rangle\in O_k \text{ } \forall w\in L\}$, we have: \begin{eqnarray*}
\rho_{L^*}(v)&=&\min\{|c|: 0 \neq c\in k_i, v/c\in L^*\}= \\
&=& \min\{|c|:0 \neq c\in k_i, |\langle v/c,w\rangle|\leq 1\text{ } \forall w\in L\} \end{eqnarray*}
hence
$$\rho_{L^*}(v)=\max_{w\in L}{|\langle v,w\rangle|}= \max_{w\in L \setminus \pi_i L}{|\langle v,w\rangle|} = \max_{0 \neq w\in V_i}\frac{|\langle v,w\rangle|}{\rho_L(w)}.$$
In general, if we have a face given by $L_0\supset L_1 \supset .. \supset L_k \supset L_{k+1}=\pi_i L_0$ and a norm $\rho$ which belongs to this face, satisfying $\rho(v)=R_i$ when $v\in L_i \setminus L_{i+1}$, for some numbers $R_0 > R_1 > .. > R_k > |\pi_i|R_0$, we will map $[\rho]$ to the point given by the face $L_k^* \supset L_{k-1}^* \supset .. \supset L_0^* \supset \pi L_k^*$  and the numbers $1/R_k > 1/R_{k-1} > .. > 1/R_0 > |\pi_i| / R_k$. This map extends linearly the map which we described on the vertices. It can be checked that the above point is equal to $[\rho^*]$.
\end{proof}

\begin{definition}
\label{sigma_mu}
\begin{enumerate}
\item
Let $1\leq i,j \leq r$ be such that $d_i=d_j$. Let $\phi^i_j: \Lambda_i \rightarrow \Lambda_j$ be the simplicial isomorphism mapping $[\langle \pi_i^{n_0} T_{i,0},..,\pi_i^{n_{d_i}} T_{i,d_i} \rangle]$ to $[\langle \pi_j^{n_0} T_{j,0},..,\pi_j^{n_{d_i}} T_{j,d_i} \rangle]$.
\item
For any permutation $\mu$ of $\{1,..,r\}$ such that $d_i = d_{\mu(i)}$ for all $i$, let $\sigma_\mu$ be the polysimplicial automorphism of $\Lambda$ defined by $\sigma_\mu(x_1,..,x_r) = (\phi^{\mu(i)}_i(x_{\mu(i)}))_i$.
\end{enumerate}
\end{definition}

\begin{corollary} \label{normalform}
Let $\phi \in \Aut_K(\X)$. Then there exist $g\in G$, numbers $r_i \in \{0,1\}$ for $1\leq i \leq r$, and a permutation $\mu$ of $\{1,..,r\}$ such that $d_{\mu(i)}=d_i$ for all $i$, and the automorphism $\phi'$ of $\B$ defined by $\phi':=(\prod_i \lambda_i ^ {r_i}) g \phi$ satisfies:
\begin{enumerate}
\item
$\phi'$ is polysimplicial, chambered and length preserving.
\item
$\phi'(\Lambda)=\Lambda$.
\item
$\phi'|_\Lambda = \sigma_\mu$.
\end{enumerate}
\end{corollary}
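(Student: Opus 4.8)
### Proof proposal for Corollary \ref{normalform}

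The plan is to combine the structural results of Proposition \ref{induce} with the combinatorial analysis of Lemma \ref{lem:color}, and then to eliminate the residual ambiguity by passing through an apartment-fixing reduction. First I would invoke Proposition \ref{induce} to know that $\phi$ induces a polysimplicial, chambered, length-preserving automorphism of $\B$; write $\psi$ for this induced automorphism. By Lemma \ref{citebrt}(3), $\psi$ carries apartments to apartments, so $\psi(\Lambda)$ is an apartment. Using Lemma \ref{citebrt}(1)--(2), there is $g_0 \in G$ carrying $\psi(\Lambda)$ back to $\Lambda$ and then an element of the stabilizer of $\Lambda$ carrying the image chamber back to $\Delta$; replacing $\phi$ by $g_0\phi$ for a suitable $g_0$, I may assume from the start that $\psi(\Lambda)=\Lambda$ and $\psi(\Delta)=\Delta$. (Note that composing with elements of $G$ preserves the properties in item (1), since $G$ acts by length-preserving polysimplicial automorphisms.)

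Next, apply Lemma \ref{lem:color} to $\psi$. It produces a permutation $\mu$ of $\{1,\dots,r\}$ with $d_{\mu(i)}=d_i$, and permutations $p_i$ of $\Z/d_i\Z$, each of the form $t\mapsto a_i+t$ or $t\mapsto a_i-t$, describing the action of $\psi$ on the labels of $\Delta$ via $C\circ\psi\circ D$. Set $r_i=1$ if $p_i$ is label-reversing and $r_i=0$ otherwise. The automorphism $\lambda_i$ of Lemma \ref{involution} is label-reversing, fixes $\Lambda_i$ setwise (acting as the symmetry through the origin), and squares to the identity; hence $(\prod_i\lambda_i^{r_i})$ preserves $\Lambda$ and fixes $\Delta$ setwise, and the composite $\psi' := (\prod_i\lambda_i^{r_i})\,\psi$ has label action $C\circ\psi'\circ D$ given by $(a_1,\dots,a_r)\mapsto(a'_1+a_{\mu(1)},\dots,a'_r+a_{\mu(r)})$ for some constants $a'_i$ — i.e. each coordinate permutation is now a pure translation, hence label-preserving. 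Then I would compose with a further element $g_1\in G$: since each $G_i$ acts on $\B_i$ with the affine Weyl group realizing all translations on $\Lambda_i$ by lattice vectors, and in particular permuting the vertices of $\Delta_i$ by any cyclic rotation, one can choose $g_1$ whose effect on labels is exactly the inverse translation, so that $(\prod_i\lambda_i^{r_i})\,g_1\,\psi$ acts trivially on the labels of $\Delta$, i.e. fixes every vertex of $\Delta$.

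Finally, relabel the composite (after possibly absorbing $g_1$ into $g$) and call it $\phi'$. By construction $\phi'$ satisfies (1) and fixes $\Lambda$ setwise, so (2) holds; it remains to identify $\phi'|_\Lambda$ with $\sigma_\mu$. The map $\sigma_\mu$ is, by Definition \ref{sigma_mu}, the chambered simplicial isomorphism of $\Lambda$ that permutes the factors by $\mu$ via the canonical coordinate identifications, and in particular it fixes each vertex of $\Delta$ and maps $\Delta$ to $\Delta$ with the same underlying vertex permutation as $\phi'$. Thus $\phi'$ and $\sigma_\mu$ are two chambered polysimplicial maps from the polysimplicial complex $\Lambda$ to itself agreeing on the chamber $\Delta$. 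Since every facet of $\Lambda$ lies in at most two chambers (indeed $\Lambda$ is a thin building), Lemma \ref{aptaut} forces $\phi'|_\Lambda = \sigma_\mu$, giving (3). The main obstacle I anticipate is the bookkeeping in the middle step: verifying that after correcting the label-reversals by the $\lambda_i$'s the residual coordinatewise label permutations are genuine translations that can be cancelled by a single element of $G$ (this uses that the affine Weyl group of each $SL_{d_i+1}$ acts transitively on the vertices of $\Delta_i$, equivalently that $G_i$ acts transitively on the type/label classes within a chamber), and checking that the constants $a'_i$ assemble correctly through the permutation $\mu$; everything else is a direct appeal to the lemmas already established.
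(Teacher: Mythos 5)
Your overall skeleton (normalize by $G$ using Lemma \ref{citebrt}, read off $\mu$, the $p_i$ and hence the $r_i$ from Lemma \ref{lem:color}, correct the reversals with the $\lambda_i$, and finish with the rigidity Lemma \ref{aptaut}) is indeed the paper's, but the middle and final steps contain concrete errors. First, $\prod_i\lambda_i^{r_i}$ does not fix $\Delta$ setwise: by Lemma \ref{involution}, $\lambda_i$ acts on $\Lambda_i$ as the point symmetry through the origin, which sends $\Delta_i$ to the opposite chamber, not to itself. Second, and more seriously, your goal of choosing $g_1\in G$ so that the composite ``acts trivially on the labels of $\Delta$, i.e.\ fixes every vertex of $\Delta$'' is unachievable when $\mu\neq\mathrm{id}$: elements of $G=\prod_iG_i$ act coordinatewise, so on labels they can only effect coordinatewise translations $(a_i)\mapsto(a_i+b_i)$; they cannot undo the permutation of the factors, so the best one can arrange is that the label action equals the pure coordinate permutation $(a_i)\mapsto(a_{\mu(i)})$. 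Relatedly, your assertion that $\sigma_\mu$ ``fixes each vertex of $\Delta$'' is false for $\mu\neq\mathrm{id}$: $\sigma_\mu$ permutes the vertices of $\Delta$ nontrivially (it sends the vertex whose $i$-th coordinate is the $m_i$-th vertex of $\Delta_i$ to the one whose $i$-th coordinate is the $m_{\mu(i)}$-th vertex of $\Delta_i$). Since your application of Lemma \ref{aptaut} rests on $\phi'$ and $\sigma_\mu$ agreeing on $\Delta$, and you derive that agreement from these two incompatible and false statements, the final step does not go through as written.

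The repair is the paper's: instead of trying to kill the whole label action, normalize $g\phi$ so that it fixes the origin $x_0$ of $\Lambda$ (compose with suitable powers of the monomial elements $f_i\in G_i$ that cyclically rotate the vertices of $\Delta_i$); then the constants $a_i$ in Lemma \ref{lem:color} all vanish, and since each $\lambda_i$ acts on labels by $t\mapsto -t$, the label action of $\phi'=(\prod_i\lambda_i^{r_i})g\phi$ on $\Delta$ is exactly $(a_i)\mapsto(a_{\mu(i)})$ --- the same as that of $\sigma_\mu$. Combined with the injectivity of the labelling on the vertices of a chamber and the fact that $\phi'$ and $\sigma_\mu$ carry $\Delta$ to the same chamber (a point that does require a word about where $\prod_i\lambda_i^{r_i}$ sends $\Delta$), this gives $\phi'|_\Delta=\sigma_\mu|_\Delta$, after which Lemma \ref{aptaut} yields item (3) exactly as you intended.
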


\begin{proof}
By proposition \ref{induce}, $\phi$ induces a polysimplicial, length preserving automorphism of $\B$. By lemma \ref{citebrt}, there exists $g\in G$ such that $g \phi (\Lambda)=\Lambda$ and $g \phi (\Delta)=\Delta$. 
Let $x_0$ be the origin of $\Lambda$, i.e the point $([L_1],..,[L_d])$ where $L_i = \langle T_{i,0},..,T_{i,d_i} \rangle$.
The linear endomorphism of $V_i$ defined by $f_i(T_{i,j})= T_{i,j-1}$ for $j>0$ and $f_i(T_{i,0})=\pi_i T_{i,d_i}$  induces an automorphism of $\B_i$ which preserves $\Lambda_i$ and $\Delta_i$, and performs a cyclic permutation on the vertices of $\Delta_i$. Multiplying $g$ by suitable powers of the $f_i$'s, we may assume that $g \phi$ fixes $x_0$. 
Let $\mu$ and the permutations $p_i$ be as in lemma \ref{lem:color} (applied on the automorphism $g \phi$), and define $r_i=0$ if $p_i$ is of the form $t \mapsto t+a_i$, and $r_i=1$ otherwise. By our assumption that $g\phi x_0=x_0$, we have $a_i=0$ for all $i$. Let $\phi'=(\prod_i \lambda_i ^ {r_i}) g \phi$. Then $C\circ \phi' \circ D$ is the permutation $\mu$ on $\prod_i \Z / d_i \Z$. It follows that $\phi'=\sigma_\mu$ on $\Delta$. By lemma \ref{aptaut}, $\phi'|_\Lambda = \sigma_\mu$.
\end{proof}

It would be useful to compose $\phi'$ with $\mu^{-1}$ and get an automorphism of $\B$ which fixes $\Lambda$ pointwise. However, this is not possible in a strict sense since the buildings $\B_i$ may not be interchanged. To allow such permutations, we need the ability to embed buildings defined over different fields in a common building. We describe such a construction in the next section.

% ************************* The building under a field extension ************************

\section{The building under a field extension}

Let $h/h'$ be an extension of non-Archimedean local fields, and $d\geq 1$ an integer. Let us discuss the relation between the buildings $B^d_h$ and $B^d_{h'}$ of $SL_{d+1}/\h$ and $SL_{d+1}/{h'}$ respectively. We fix an $h'$-vector space $W$ with a basis $w_0,..,w_d$ and view $B^d_{h'}$ as the set of homothety classes of norms on $W$, and $B^d_h$ as the set of homothety classes of norms on $W \otimes_{\h'} \h = \bigoplus_i h w_i$.

We have a map between the sets of vertices $$ \nu^d_{\h,\h'} : \text{ver}(B^d_{\h'}) \rightarrow \text{ver}(B^d_\h)$$ defined by
$ \nu^d_{\h,\h'}([L]) = [L\otimes_{O_{\h'}}O_\h] $ for an $O_{\h'}$-lattice $L$. Then $\nu^d_{h,h'}$ maps the vertices of an apartment of $B^d_{h'}$ to vertices in the apartment of $B^d_h$, which corresponds to the same basis. We extend $ \nu^d_{\h,\h'}$ to $B^d_{\h'}$, as an affine map on each chamber of $B^d_{\h'}$.

We also have a map in the other direction $\delta^d_{\h,\h'}: B^d_{\h} \rightarrow B^d_{\h'}$ defined by restricting a norm on $\h w_0 \oplus .. \oplus \h w_d$ to $\h'w_0 \oplus .. \oplus \h'w_d$. We have $\delta^d_{\h,\h'} \circ  \nu^d_{\h,\h'} = id$ (this is easily checked on the lattice $\langle w_1,..,w_d\rangle$, and by $PGL_{d+1}(\h')$-equivariance it holds for all the vertices of $B^d_{\h'}$). It follows that $\nu^d_{\h,\h'}$ is injective.

%We get the following two commutative diagrams:
%\[ \begin{array}{cccccccc}
%\h_{k'}^d&\supset                              &\h_k^d&&&\h_{k'}^d&\supset                              &\h_k^d\\
%\downarrow   &                                     &\uparrow  &&&\cup         &                                     &\cup  \\
%B^d(k')    &\overset{\delta_{d,l,l'}}{\twoheadleftarrow}&B^d(k)    &&&B^d_(k')    &\overset{\nu^d_{l,l'}}{\hookrightarrow}       &B^d(k)    \\
%\end{array}\]
Let us look at two extreme cases:

\begin{itemize}
\item The extension $\h/\h'$ is {\bf unramified}.
In this case, $\h$ and $\h'$ have the same uniformizer $\xi$.  Since $L\supset M \supset \xi L$ implies $\nu^d_{\h,\h'}(L)\supset \nu^d_{\h,\h'}(M) \supset \xi \nu^d_{\h,\h'}(L)$, $\nu^d_{\h,\h'}$ is a simplicial map. However, since the residue field of $\h$ is strictly bigger than the residue field of $\h'$, $\nu^d_{\h,\h'}$ is not surjective.

\begin{figure*}[h]
	\begin{center}
		\includegraphics[width=0.15\textwidth]{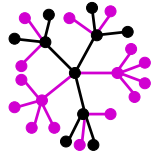}
	\end{center}
	\caption{The inclusion of buildings for an unramified quadratic extension ($d=1$,$p=2$)}
	\label{fig:twocolors}
\end{figure*}

\item The extension $\h/\h'$ is {\bf totally ramified}.
In this case, $\nu^d_{\h,\h'}$ is not a simplicial map. Two vertices at distance $1$ map to two vertices at distance $e(\h'/\h)$ (the ramification index). However, the buildings are (at least locally) isomorphic.

\begin{figure*}[h]
	\begin{center}
		\includegraphics[width=0.14\textwidth]{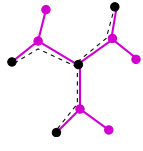}
	\end{center}
	\caption{The inclusion of buildings for a ramified quadratic extension ($d=1$,$p=2$)}
	\label{fig:ramified}
\end{figure*}

\end{itemize}

For a general extension $\h/\h'$, we get a superposition of the above two phenomena. In order to make the map $\nu^d_{\h,\h'}$ simplicial, we will refine the simplicial structure of $B^d_{\h'}$. We will consider two structures on $B^d_{\h'}$: One induced by its embedding in $B^d_{\h}$, and the other obtained by subdividing each edge into $e(\h/\h')$ parts, and subdividing all the faces of the building accordingly. Later on, we will show that these two structures coincide.

\begin{definition} Let $X$ be a polysimplicial complex, and $\F$ its set of faces. Let $Y$ be a set and $i:Y\rightarrow X$ an injective function. The induced polysimplicial structure on $Y$ is the set of faces $\{ F \subseteq Y \, | \, i(F) \in \F \}$, with the order relation and simplicial structure inherited from those of $X$.
\end{definition}

Note that this definition does not guarantee that the resulting structure is a polysimplicial complex. 

Next, we define the subdivision of a $d$-simplex, where each edge is divided into $N$ parts.

For an integer $d\geq 1$, let us look at an apartment $\Sigma$ in the building of $SL_{d+1}$ over any local field. It is isomorphic to $\R^{d+1} / \R \cdot (1,1,..,1)$, and the vertices correspond to the image of $\Z^{d+1}$ in this space. Hence, we have coordinates $x_0,..,x_d$ on $\Sigma$. Let $\eta$ be the chamber in $\Sigma$ given by the inequalities $x_0 \leq x_1 \leq .. \leq x_d \leq x_0 + 1$. Each face of $\eta$ is defined by a finite set of conditions of the form $x_i=x_{i+1}$ (for $0\leq i \leq d-1)$ or $x_d=x_0+1$. The other chambers of $\Sigma$ can be obtained from $\eta$ by permutations of the coordinates and translations:
Given a permutation $\sigma$ of $\{0,..,d\}$ and integers $a_0,..,a_d$, we have a chamber
\begin{equation*}
\begin{split}
 \eta(\sigma,&a_0,..,a_d) := \\
 &\{ [(x_0,..,x_d)]: x_{\sigma(0)}+a_0 \leq x_{\sigma(1)}+a_1 \leq .. \leq x_{\sigma(d)}+a_d \leq x_{\sigma(0)}+a_0+1\}
\end{split}
\end{equation*}

For any point $[x] \in \R^{d+1} / \R \cdot (1,1,..,1)$, where $x=(x_0,..,x_d)$, we may sort the fractional parts of the coordinates $x_0,..,x_d$ in nondecreasing order, and obtain in this way a chamber $\eta(\sigma,a_0,..,a_d)$  in which $[x]$ is contained. Hence, any chamber of $\Sigma$ if of this form.

For an integer $N$, let $\eta_N$ be the dilated simplex
$$ \eta_N=  \{[(x_0,..,x_d)]: x_0 \leq x_1 \leq .. \leq x_d \leq x_0 + N\} $$

\begin{prop} 
$\eta_N$ is a finite union of chambers of $\Sigma$. 
\end{prop}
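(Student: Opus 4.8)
The plan is to show that $\eta_N$, as a subset of $\Sigma \cong \R^{d+1}/\R\cdot(1,1,\dots,1)$, is exactly the union of those chambers $\eta(\sigma,a_0,\dots,a_d)$ that it meets, and that there are only finitely many such. First I would observe that $\eta_N$ is bounded: the conditions $x_0 \leq x_i \leq x_0 + N$ for all $i$ confine $[x]$ to a compact region of $\R^{d+1}/\R\cdot(1,1,\dots,1)$ (after normalizing, say, $x_0 = 0$, the point lies in $[0,N]^d$). Since the chambers $\eta(\sigma,a_0,\dots,a_d)$ tile $\Sigma$ locally finitely and $\eta_N$ is compact, only finitely many chambers can intersect $\eta_N$; so it suffices to prove that each such chamber is entirely contained in $\eta_N$.

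The key step is the containment claim: if a chamber $\eta(\sigma,a_0,\dots,a_d)$ has a point in common with $\eta_N$, then it lies inside $\eta_N$. I would argue this as follows. The boundary of $\eta_N$ is cut out by the two affine hyperplanes $x_i = x_j$ (which are union of walls already, since $\Sigma$ is the building apartment and these are the reflection hyperplanes) and $x_d \geq x_0$ together with $x_d \leq x_0 + N$, i.e. the hyperplanes $x_i - x_j = 0$ and $x_i - x_j = N$ for various $i,j$. The point is that every hyperplane appearing in the description of $\partial\eta_N$ is a wall of the affine Coxeter arrangement defining the chambers of $\Sigma$: the walls are exactly $\{x_i - x_j \in \Z\}$. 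Hence each open chamber $\eta(\sigma,a_0,\dots,a_d)^\circ$ lies entirely on one side of each such hyperplane; since it meets the (closed, convex) region $\eta_N$, its interior must lie in the open region $\eta_N^\circ$, and then its closure lies in $\eta_N$. Concretely, for a point $[x]$ in the chamber one sorts the values $x_{\sigma(0)}+a_0 \leq \dots \leq x_{\sigma(d)}+a_d \leq x_{\sigma(0)}+a_0+1$; the condition $[x]\in\eta_N$ reads $x_{\max}-x_{\min}\leq N$ in the original coordinates, and this is an integer-translate comparison that is constant in sign across the chamber, so it holds throughout the chamber once it holds at one point.

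I expect the main obstacle to be bookkeeping the index combinatorics cleanly: the chambers $\eta(\sigma,a_0,\dots,a_d)$ are parametrized with redundancy (different $(\sigma, a_\bullet)$ give the same chamber), and one must be careful that the constraint $x_{\max} - x_{\min} \leq N$, when written in terms of the chamber parameters, really does define a union of walls and not something finer. The clean way around this is to avoid the explicit parametrization entirely: use that the walls of $\Sigma$ (as the affine Coxeter complex of type $\tilde A_d$) are precisely the hyperplanes $x_i - x_j = m$, $m \in \Z$, $i \neq j$; note $\eta_N = \bigcap_{i,j}\{0 \leq x_i - x_j \leq N\}$ is an intersection of half-spaces each of whose bounding hyperplane is such a wall; conclude that $\eta_N$ is a union of closed chambers by the standard fact that any intersection of closed half-spaces bounded by walls of a Coxeter arrangement is a union of closed chambers; finally invoke compactness of $\eta_N$ to get finiteness. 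I would write the proof in roughly this order: (1) $\eta_N$ compact; (2) $\eta_N$ is a union of half-spaces bounded by walls; (3) hence a union of closed chambers; (4) finitely many by compactness and local finiteness.
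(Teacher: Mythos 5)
Your argument is correct in substance but reaches the key containment by a genuinely different route than the paper. Both proofs share the same skeleton: reduce to showing that any chamber whose interior meets $\eta_N$ lies entirely in $\eta_N$, then get finiteness from boundedness. The paper proves the containment by brute force: it takes an interior point $x$ of a chamber $\eta(\sigma,a_0,\ldots,a_d)$ lying in $\eta_N$, and for each pair of consecutive indices runs a two-case analysis ($m<n$ versus $m>n$, where $i=\sigma(m)$, $i+1=\sigma(n)$) to extract inequalities on the $a_k$'s and propagate $y_i\le y_{i+1}$ to every other point $y$ of the chamber. You instead observe that every bounding hyperplane of $\eta_N$ (namely $x_i-x_j=0$ and $x_d-x_0=N$) is a wall of the affine arrangement $\{x_i-x_j=m,\ m\in\Z\}$ whose closed chambers are exactly the $\eta(\sigma,a_\bullet)$, so an open chamber, being connected and disjoint from every wall, lies strictly on one side of each bounding hyperplane and is therefore swallowed whole once it meets $\eta_N$. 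Your route is cleaner and explains \emph{why} the paper's computation works, but it front-loads the standard facts that the walls are precisely those hyperplanes and that the $\eta(\sigma,a_\bullet)^\circ$ are the components of their complement; in the paper's self-contained setup those facts would themselves need a (short) verification, which is essentially what the explicit inequality manipulation supplies.

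Two points of precision you should fix when writing this up. First, your announced key claim --- ``if a chamber has a point in common with $\eta_N$, then it lies inside $\eta_N$'' --- is false for closed chambers: a chamber sharing only a facet with $\eta_N$ meets it without being contained in it. Your detailed argument correctly works with \emph{open} chambers meeting $\eta_N$, so state the claim that way (as the paper does, via an interior point); and to recover all of $\eta_N$ from such chambers you then need the density of the union of chamber interiors (equivalently, that $\eta_N$ is the closure of its interior), which the paper flags and you should too. Second, the formula $\eta_N=\bigcap_{i,j}\{0\le x_i-x_j\le N\}$ as written over all ordered pairs forces all coordinates to be equal; it should read $\bigcap_{i<j}\{0\le x_j-x_i\le N\}$.
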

\begin{proof} As $\eta_N$ is a closed set, and the union of the interiors of the chambers is dense in the apartment, it is enough to show that given a chamber $F$ and an interior point $[x]$ of $F$, if $[x]$ is in $\eta_N$ then $F \subseteq \eta_N$. Let us suppose that $F = \eta(\sigma, a_1,..,a_d)$. Then $x$ satisfies $$x_{\sigma(0)}+a_0 < x_{\sigma(1)}+a_1 < .. < x_{\sigma(d)}+a_d < x_{\sigma(0)}+a_0 + 1$$ and also $x_0 \leq x_1 \leq .. \leq x_d \leq x_0+N$. Let $[y] \in F$. We have  $$y_{\sigma(0)}+a_0 \leq y_{\sigma(1)}+a_1 \leq .. \leq y_{\sigma(d)}+a_d \leq  y_{\sigma(0)}+a_0 + 1$$ Let us show that $y_0 \leq .. \leq y_d \leq y_0+N$.

Let $0 \leq i \leq d-1$, and write $i=\sigma(m)$, $i+1=\sigma(n)$. We consider two cases:
\begin{itemize}
\item
If $m<n$ then $x_i+a_m < x_{i+1}+a_n < x_i + a_m + 1$. Since $x_i \leq x_{i+1}$, we get $x_{i+1}+a_n < x_{i+1} + a_m + 1$. Hence $a_n < a_m+1$, hence $a_n \leq a_m$.
Since $[y] \in F$, we have $$y_i +a_m \leq y_{i+1} + a_n \leq y_{i+1} + a_m \Rightarrow y_i \leq y_{i+1}$$
\item
If $m>n$ then $x_i + a_m > x_{i+1}+a_n \geq x_i + a_n$. Hence, $a_m > a_n$. We conclude that
$$ y_i + a_m \geq y_{i+1}+a_n \geq y_i+a_m-1 \geq y_i+a_n \Rightarrow y_{i+1}\geq y_i $$
\end{itemize}
as desired. A similar analysis shows that we always have $y_d \leq y_0+ N$.
\end{proof}

\begin{definition} Given a $d$-dimensional closed simplex $F$ and an integer $N$, we let $F[N]$ be the simplicial complex whose underlying set is $F$, and the simplicial structure is obtained by choosing an affine isomorphism $f: F \rightarrow \eta_N$, and taking the induced simplicial structure on $F$. \end{definition}

It is easy to see that this definition is independent of the choice of $f$. 

\begin{lemma}\label{compatible}
Let $F$ be a $d$-dimensional simplex and let $F'$ be a closed face of $F$. Then the simplicial structure of $F'[N]$ coincides with the simplicial structure induced from $F[N]$.
\end{lemma}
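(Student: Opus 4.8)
The plan is to reduce the statement to a concrete computation in the coordinate model $\eta_N \subseteq \R^{d+1}/\R\cdot(1,\dots,1)$, where both simplicial structures can be read off explicitly. First I would fix an affine isomorphism $f\colon F \to \eta_N$ realizing the simplicial structure of $F[N]$. A closed face $F'$ of the $d$-simplex $F$ corresponds, under $f$, to the intersection of $\eta_N$ with an affine subspace cut out by equations of the form $x_{\sigma(i)} = x_{\sigma(i+1)}$ together with possibly $x_{\sigma(d)} = x_{\sigma(0)} + N$ — i.e., the kind of ``wall'' defining a face of the big simplex $\eta$ (scaled by $N$). After relabeling coordinates by the permutation $\sigma$, I may assume $F'$ is cut out by $x_0 = x_1 = \dots = x_p$ and, in the other extreme, by $x_q = x_{q+1} = \dots = x_d = x_0 + N$ for suitable indices; in general a face is an intersection of such conditions, and the resulting slice is affinely isomorphic to a lower-dimensional dilated simplex $\eta_N^{(d')}$ of the appropriate dimension $d' = \dim F'$.

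The key step is then the verification that the two simplicial structures on $F'$ agree. On one hand, $F'[N]$ is by definition the structure pulled back from $\eta_N^{(d')}$ via any affine isomorphism $F' \to \eta_N^{(d')}$. On the other hand, the structure induced from $F[N]$ consists of those subsets of $F'$ which are faces of $\eta_N$ contained in the slice. So what must be checked is: the faces of $\eta_N$ (an $N$-fold dilation of $\eta$, subdivided into unit chambers $\eta(\sigma,a_0,\dots,a_d)$) that lie inside the affine slice defining $F'$ are exactly the faces of the $d'$-dimensional dilated simplex obtained by restricting the coordinate functions to that slice. Concretely, setting $x_0 = \dots = x_p =: y_0$ and renaming the surviving coordinates, the conditions $x_0 \le x_1 \le \dots \le x_d \le x_0 + N$ become $y_0 \le y_1 \le \dots \le y_{d'} \le y_0 + N$, and the chamber-defining inequalities of $\eta_N$ restrict to exactly the chamber-defining inequalities of $\eta_N^{(d')}$. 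This is a bookkeeping check that slicing a dilated simplex by a coordinate wall gives a dilated simplex of one lower dimension with the induced subdivision — it uses only that the subdivision of $\eta_N$ into unit cubes-of-chambers is defined coordinatewise by integer-translation walls $x_{\sigma(i)} + a_i = x_{\sigma(i+1)} + a_{i+1}$, which behave compatibly under setting consecutive coordinates equal or under the identification $x_{\sigma(d)} = x_{\sigma(0)}+N$.

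I would organize this as: (1) reduce to the case $N$ arbitrary but $F$ identified with $\eta_N$ and $F'$ identified with a standard face-slice, using that the definition of $F[N]$ is independent of the chosen affine isomorphism; (2) handle the two generating types of walls separately — a ``lower'' wall $x_i = x_{i+1}$ and the ``wrap-around'' wall $x_d = x_0 + N$ — showing in each case that restriction of coordinates carries the subdivision of $\eta_N$ to that of the lower-dimensional $\eta_N$; (3) conclude for a general face $F'$ since it is an intersection of such walls and the argument iterates. The main obstacle I expect is purely notational: keeping track of how the permutation $\sigma$ and the integer translation vector $(a_0,\dots,a_d)$ of an arbitrary chamber $\eta(\sigma,a_0,\dots,a_d)$ of $\eta_N$ restrict and re-index when one imposes $x_i = x_{i+1}$ or $x_d = x_0 + N$, and checking that no chamber of $\eta_N$ meeting the slice in a set of the wrong dimension can occur — i.e., that the slice is a union of genuine faces, which follows from the convexity argument already used in the preceding proposition (an interior point of a chamber lying on the wall forces the whole chamber onto the wall only when the wall is one of the defining walls, and otherwise the chamber is disjoint from it). Once that indexing is pinned down the equality of the two face-sets is immediate.
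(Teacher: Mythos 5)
Your proposal is correct and follows essentially the same route as the paper: reduce to the model $\eta_N \subseteq \R^{d+1}/\R\cdot(1,\dots,1)$ with $F'$ a coordinate wall, match the chamber-defining inequalities on the slice with those of the lower-dimensional dilated simplex, and invoke the preceding proposition to see that the slice is genuinely covered by faces of $F[N]$. The only point the paper makes more explicit than you do is the decisive step: for a chamber $\kappa'$ of $F'[N]$ it writes down the two apartment chambers $\kappa_1,\kappa_2$ adjacent along $\kappa'$ and uses the preceding proposition (via a point of the interior of $\eta_N$ near an interior point of $\kappa'$) to show that one of them lies in $\eta_N$, so that $\kappa'$ really is a facet of a chamber of $F[N]$; it also reduces at the outset to the single facet $x_0=x_1$ of codimension one, using that $F[N]$ is independent of the chosen affine isomorphism, so the wrap-around wall and higher-codimension faces need no separate treatment.
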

\begin{proof}
It is enough to verify the claim when $F'$ is of codimension 1 in $F$. Also, it is enough to prove the claim for one specific simplex and one specific facet of it. Let us take $F = \eta_N$ and $F'$ the facet defined by the hyperplane $x_0=x_1$. Let $\kappa'$ be a chamber in $F'[N]$. Then $\kappa'$ has the form
\begin{equation*}
\begin{split}
\kappa' = \{[(x_0,..,x_d)]:& x_0=x_1, \\
                                     &x_{\sigma(1)} + a_1 \leq x_{\sigma(2)} + a_2 \leq .. \leq x_{\sigma(d)} + a_d \leq x_{\sigma(1)} + a_1 + 1 \} 
\end{split}
\end{equation*}
for some permutation $\sigma$ of $\{1,..,d\}$, and integers $a_1,..,a_d$. 
Let us prove that such a chamber is a closed face of a chamber of $F[N]$: Let $m$ be such that $\sigma(m)=1$, and let $\kappa_1$ and $\kappa_2$ be the chambers of the apartment $\Sigma$ defined the same way as $\kappa'$, but with the equality $x_0=x_1$ replaced by $x_{\sigma(m-1)}+a_{m-1} \leq x_0+a_m \leq x_1 + a_m$ and $x_1+a_m \leq x_0+a_m \leq x_\sigma(m+1)+a_{m+1}$, respectively. Then $\kappa'$ is a common closed face of $\kappa_1$ and $\kappa_2$, of codimension 1. Thus, $\kappa_1$ and $\kappa_2$ are neighbouring chambers, and each of them is the reflection of the other with respect to the hyperplane $x_1=x_2$. In this situation, it is easy to prove that any interior point of $\kappa'$ is an interior point of $\kappa_1 \cup \kappa_2$. Let us take such an interior point $x$, and a neighbourhood $A$ of $x$ which is contained in $\kappa_1 \cup \kappa_2$. Since $x\in F$, There is a nonempty intersection between $A$ and the interior of $F$. Let $y$ be a point in $A \cap F$. Then $y$ is either in $\kappa_1$ or in $\kappa_2$. According to the proof of the last proposition, either $\kappa_1$ or $\kappa_2$ are contained in $F$. Let us suppose that $\kappa_i \subseteq F$. Then $\kappa_i$ is a chamber of $F[N]$, such that $\kappa_i \cap F' = \kappa'$, as desired.
 \end{proof}

\begin{definition} Let $B$ be a simplicial complex. We let $B[N]$ be the simplicial complex with the same underlying set as $B$, where each closed simplex $F$ of $B$ is given the simplicial structure $F[N]$.
 \end{definition}

Lemma \ref{compatible} guarantees that the symplicial structures $F[N]$ glue to a simplicial complex structure.

\begin{lemma} \label{twostructs}
Let $\h/\h'$ be an extension of complete local fields, and let $e$ be its ramification index. Then the simplicial structure induced from $B^d_\h$ on $B^d_{\h'}$, via the embedding $\nu^d_{\h,\h'}$, coincides with $B^d_{\h'}[e]$. 
\end{lemma}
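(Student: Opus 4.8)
\textbf{Proof proposal for Lemma \ref{twostructs}.}

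The plan is to reduce everything to a single apartment and a single chamber, and then to a purely combinatorial statement about how $\mathbb{Z}^{d+1}$ sits inside $\mathbb{Z}^{d+1}$ after scaling in the ramified direction. Fix an apartment $\Sigma'$ of $B^d_{\h'}$, say the basic one attached to the basis $w_0,\dots,w_d$, and let $\Sigma$ be the corresponding apartment of $B^d_\h$. By construction $\nu^d_{\h,\h'}$ maps $\Sigma'$ into $\Sigma$, and on vertices it sends the homothety class of the lattice $\langle \xi'^{\,n_0}w_0,\dots,\xi'^{\,n_d}w_d\rangle$ (where $\xi'$ is a uniformizer of $\h'$) to the homothety class of $\langle \xi'^{\,n_0}w_0,\dots,\xi'^{\,n_d}w_d\rangle\otimes O_\h$. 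Since $\xi' = u\,\xi^{e}$ for a uniformizer $\xi$ of $\h$ and a unit $u$, this latter lattice is homothetic to $\langle \xi^{\,e n_0}w_0,\dots,\xi^{\,e n_d}w_d\rangle$. Identifying $\Sigma'$ with $\R^{d+1}/\R(1,\dots,1)$ via the coordinates $x_j = -\log_{|\xi'|}$ of the $j$-th basis vector, and $\Sigma$ with $\R^{d+1}/\R(1,\dots,1)$ via the analogous coordinates with respect to $|\xi|$, the map $\nu^d_{\h,\h'}$ becomes (up to the canonical identification) multiplication by $e$ on $\R^{d+1}/\R(1,\dots,1)$, extended affinely over chambers. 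So the image of the integer lattice of $\Sigma'$ is exactly $e\Z^{d+1}$ inside the integer lattice $\Z^{d+1}$ of $\Sigma$.

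Next I would identify the two simplicial structures on $\Sigma'$ explicitly. The structure induced from $B^d_\h$ has as its faces exactly the preimages under $\nu^d_{\h,\h'}$ of faces of $\Sigma$; since $\nu^d_{\h,\h'}$ is, in the above coordinates, the affine bijection "divide by $e$" from $\Sigma'$ onto $\Sigma$, this induced structure is the image under "multiply by $1/e$" of the standard simplicial structure of $\Sigma$. On the other hand, the chamber $\Delta'$ of $\Sigma'$ corresponding to $\eta$ (given by $x_0 \le x_1 \le \dots \le x_d \le x_0+1$) is sent by "multiply by $e$" to the region $x_0 \le x_1 \le \dots \le x_d \le x_0 + e$, which is precisely $\eta_e$; by the proposition just before the definition of $F[N]$, $\eta_e$ is a union of chambers of $\Sigma$, and its induced subdivision is by definition the structure $\Delta'[e]$. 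Because $G=PGL_{d+1}(\h')$ acts transitively on chambers of $\Sigma'$ (Lemma \ref{citebrt}, applied within one building, or directly), and $\nu^d_{\h,\h'}$ is $PGL_{d+1}(\h')$-equivariant, the same identification holds chamber by chamber: for each chamber $F$ of $\Sigma'$, the simplicial structure induced on $F$ from $B^d_\h$ agrees with $F[e]$. Lemma \ref{compatible} then says these glue consistently, so on the whole apartment $\Sigma'$ the induced structure equals $B^d_{\h'}[e]|_{\Sigma'}$.

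Finally I would globalize from one apartment to the whole building. Both simplicial structures in question — the one induced from $B^d_\h$ and $B^d_{\h'}[e]$ — are defined on each closed simplex of $B^d_{\h'}$ and are compatible with passage to faces; and any two apartments of $B^d_{\h'}$ overlap in a union of faces, with $PGL_{d+1}(\h')$ acting transitively on apartments (Lemma \ref{citebrt}) and $\nu^d_{\h,\h'}$ equivariant. So the apartment-level identification of the previous paragraph, transported by the group action and checked for consistency on overlaps, extends to an identification of the two structures on all of $B^d_{\h'}$.

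The main obstacle, I expect, is the bookkeeping in the middle step: one must verify carefully that the affine extension of $\nu^d_{\h,\h'}$ over each chamber of $B^d_{\h'}$ really is the "multiply by $e$" map in suitable coordinates — i.e. that the affine structure on chambers coming from the building of $SL_{d+1}/\h'$ is the one for which this holds — and that the induced subdivision of $\eta_e$ inside $\Sigma$ matches the subdivision defining $\eta_e[\,\cdot\,]$ in the earlier proposition, rather than some coarser or finer one. Once the coordinates are pinned down and the $\xi' = u\xi^e$ computation is made precise, everything else is formal: equivariance plus Lemma \ref{compatible} handle the gluing.
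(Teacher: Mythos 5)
Your proposal is correct and follows essentially the same route as the paper: reduce by $PGL_{d+1}(\h')$-equivariance to the basic chamber of the basic apartment, observe that in the standard coordinates $\nu^d_{\h,\h'}$ is multiplication by $e$ so the chamber maps onto the dilated simplex $\eta_e$, and conclude from the definition of $F[N]$ (the paper is if anything terser, not even spelling out the gluing via Lemma \ref{compatible}). Your remark that $\xi' = u\xi^e$ only up to a unit is a point the paper glosses over, but it is harmless since homothety classes of lattices and the induced coordinates depend only on valuations.
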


\begin{proof} Let $\xi$ and $\xi'$ be uniformizers of $\h$ and $\h'$, respectively, such that $\xi' = \xi ^ e$. Let $A'$ be the apartment of $B^d_{\h'}$ corresponding to the basis $w_0,..,w_d$, and let $A$ be the apartment of $B^d_{h}$ corresponding to the same basis. The map $\nu^d_{\h,\h'}$ induces an isomorphism from $A'$ to $A$. 

Let $\Theta$ be the basic chamber of $B^d_{\h'}$, 
$$\Theta= \{\rho\in A' \, |\,  \rho(w_0) \geq \rho(w_1) \geq ... \geq \rho(w_d) \geq |\xi'| \rho(w_0) \}$$

Since the embedding $\nu^d_{\h,\h'}$ is $PGL_{d+1}(h')$-equivariant, it is enough to verify that the induced structure on $\Theta$ is $\Theta[e]$. 

Indeed, let us map $A$ to the apartment $\Sigma$ defined above by $\langle \xi^{m_0} w_0,..,\xi^{m_d} w_d \rangle \mapsto [-(m_0, m_1,..,m_d)]$. This map is a simplicial isomorphism, and under this map  $\nu^d_{\h,\h'}(\Theta)$ is mapped to the dilated simplex $\eta_e$ defined above. Therefore the induced structure on $\Theta$ is $\Theta[e]$.\end{proof}

Next, we will describe a way of obtaining a subdivision of a polysimplicial complex, given numerical data called \emph{marking}. 
\begin{definition}\begin{enumerate} \item Let $A$ be a polysimplicial complex. A marking on $M$ on $A$ is an assignment of a natural number $M(e)$ to each edge $e$ of $A$, such that for every closed face $F$ of $A$, and a representation of $F$ as a product of closed simplices $F=\prod_{i=1}^nF_i$, there exist natural numbers $a_1,..,a_n$ such that each edge of $F$ that comes from $F_i$ is assigned the number $a_i$ by $M$.

\item Given $A$ and $M$, we define a polysimplicial complex $A[M]$ as follows: The underlying set of $A[M]$ is the same as of $A$. For each closed face $F$ of $A$, we write it as a product of closed simplices: $F = \prod F_i$. Let $a_1,..,a_n$ be the numbers guaranteed by the definition of $M$. Then we take the polysimplicial structure $\prod_{i=1}^n F_i[a_i]$ on $F$.
\end{enumerate}
\end{definition}

Using the first part of lemma \ref{graphaut} and lemma \ref{compatible}, it is easy to see that the polysimplicial structures defined on the closed faces of $A$ in the above definition are compatible, hence glue to a polysimplicial structure on $A$. 

\begin{prop}\label{extend} Let $A_1, A_2$ be polysimplicial complexes. Let $M_1, M_2$ be markings on $A_1$ and $A_2$ respectively, and let $f$ be a polysimplicial chambered map from $A_1$ to $A_2$ such that $f$ preserves the marking: $M_2(f(e))=M_1(e)$ for any edge $e$ of $M_1$. Then, $f$ is a polysimplicial chambered map from $A_1[M_1]$ to $A_2[M_2]$, as well. \end{prop}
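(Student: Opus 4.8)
The plan is to reduce the statement to a single chamber of $A_1$ and there to use that the simplicial subdivision $F\mapsto F[N]$ is natural with respect to affine isomorphisms. First I would record the local shape of $A_1[M_1]$ and $A_2[M_2]$. Let $\Phi$ be a chamber of $A_1$, written as a product of closed simplices $\Phi=\prod_{i=1}^n\Phi_i$, and let $a_i$ be the number that $M_1$ assigns to every edge of $\Phi$ coming from $\Phi_i$. By the definition of $A_1[M_1]$, together with Lemma \ref{compatible} and the compatibility remark following the definition of $A[M]$, the polysimplicial structure that $A_1[M_1]$ induces on $\Phi$ is exactly $\prod_{i=1}^n\Phi_i[a_i]$. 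Since every closed face of $A_1$ lies in a chamber, every closed face of $A_1[M_1]$ is a closed face of $\prod_i\Phi_i[a_i]$ for some chamber $\Phi$ of $A_1$, and likewise every chamber of $A_1[M_1]$ is a chamber of such a $\prod_i\Phi_i[a_i]$. As $f$ is chambered, $f(\Phi)$ is a chamber of $A_2$, and the analogous description holds for $A_2[M_2]$ on $f(\Phi)$. Hence it suffices to prove that, for each chamber $\Phi$ of $A_1$, the restriction $f|_\Phi$ is a chambered polysimplicial isomorphism from $\prod_i\Phi_i[a_i]$ onto the corresponding subdivided chamber of $A_2[M_2]$.

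Fix such a $\Phi$ and write $f(\Phi)=\prod_{j=1}^m\Psi_j$ as a product of closed simplices, with $b_j$ the number $M_2$ assigns to every edge of $f(\Phi)$ coming from $\Psi_j$. The map $f|_\Phi\colon\Phi\to f(\Phi)$ is an affine isomorphism of polysimplices, so it restricts to an isomorphism between the $1$-skeletons of $\prod_i\Phi_i$ and $\prod_j\Psi_j$; by Lemma \ref{graphaut} we then have $m=n$, a permutation $\mu$ of $\{1,\dots,n\}$ with $\Phi_i$ and $\Psi_{\mu(i)}$ of equal dimension, and simplicial isomorphisms $h_i\colon\Phi_i\to\Psi_{\mu(i)}$ such that $f|_\Phi$ is the product of the $h_i$ followed by the relabelling $\mu$ of the factors (being affine on $\Phi$, $f|_\Phi$ is determined by its action on vertices). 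An edge of $\Phi$ coming from $\Phi_i$ is carried by $f$ to an edge of $f(\Phi)$ coming from $\Psi_{\mu(i)}$, so the hypothesis $M_2(f(e))=M_1(e)$ forces $b_{\mu(i)}=a_i$ for every $i$.

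It remains to see that $f|_\Phi$ respects the subdivisions. The two subdivided chambers are $\prod_i\Phi_i[a_i]$ and $\prod_j\Psi_j[b_j]$. Since the simplicial structure $F[N]$ on a $d$-simplex is independent of the affine identification of $F$ with $\eta_N$ used to define it, each affine simplicial isomorphism $h_i\colon\Phi_i\to\Psi_{\mu(i)}$ is automatically a simplicial isomorphism $\Phi_i[a_i]\to\Psi_{\mu(i)}[a_i]$, and $\Psi_{\mu(i)}[a_i]=\Psi_{\mu(i)}[b_{\mu(i)}]$ by the previous paragraph; taking the product of the $h_i$ and composing with the factor permutation $\mu$ therefore exhibits $f|_\Phi$ as a chambered polysimplicial isomorphism $\prod_i\Phi_i[a_i]\to\prod_j\Psi_j[b_j]$. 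Together with the reduction of the first paragraph, this shows that $f$ maps every closed face of $A_1[M_1]$ to a closed face of $A_2[M_2]$, compatibly with the affine structures and the order relation, and maps chambers isomorphically onto chambers; that is, $f$ is a chambered polysimplicial map $A_1[M_1]\to A_2[M_2]$. I do not expect a genuine obstacle: the only points that require care are the matching $b_{\mu(i)}=a_i$ of the marking numbers under $\mu$, and the naturality of $F[N]$ under affine isomorphisms, after which the argument is a routine gluing.
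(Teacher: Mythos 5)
Your proposal is correct and follows essentially the same route as the paper's proof: reduce to a single chamber, use Lemma \ref{graphaut} to match the simplex factors of $\Phi$ and $f(\Phi)$, invoke the marking-preservation hypothesis to identify the subdivision parameters on corresponding factors, and appeal to the well-definedness of $F[N]$ under affine simplicial isomorphisms before gluing. No issues.
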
 
\begin{proof} If $A_1$ and $A_2$ are closed simplices, then $f$ is merely a permutation of the vertices extended linearly, hence $f$ extends to an isomorphism from $A_1[N]$ to $A_2[N]$ for any $N$. If $A_1$ and $A_2$ are closed polysimplices, then by lemma \ref{graphaut} there exist representations $A_1=\prod F_i$, $A_2 = \prod F'_i$, where $F_i, F'_i$ are simplices of the same dimension for each $i$, and $f$ induces a bijection from $F_i$ to $F'_i$. Since $f$ preserves the marking, there exist numbers $N_i$ such that $A_1[M_1]=\prod F_i[N_i]$ and $A_2[M_2]=\prod F'_i[N_i]$. Hence, $f$ extends to an isomorphism from $A_1[M_1]$ to $A_2[M_2]$, by the previous case. In the general case, since $f$ is a polysimplicial isomorphism on $F[N]$ for each chamber $F$ of $A_1$, it is polysimplicial and chambered on the whole of $A_1$. \end{proof}

We now come back to the building $\B=\prod_i{\B_i}$, where $\B_i=B^{d_i}_{k_i}$. Since all the fields $k_i$ are contained in $k$, we have embeddings $\nu_i=\nu^{d_i}_{k_i,k}:\B_i \rightarrow \B_i(k)$, where $\B_i(k)=B^{d_i}_k$. We combine these embedding to an embedding 
$\nu:\B\rightarrow \B(k)=\prod_i \B_i(k)$. 

Let $\tilde{\B}$ be the building $\B$ with the polysimplicial structure induced from the embedding $\nu: \B \rightarrow \B(k)$.

\begin{prop}\label{subdivide}
\begin{enumerate}
\item $\tilde{\B}$ is a polysimplicial complex. 
\item $\nu:\tilde{\B} \rightarrow \B(k)$ is a polysimplicial embedding.
\item If $\phi$ is a length preserving, polysimplicial, chambered automorphism of $\B$, then it is a polysimplicial chambered automorphism of $\tilde{\B}$.
\end{enumerate}
\end{prop}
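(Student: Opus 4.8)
The plan is to identify $\tilde{\B}$ with a subdivision $\B[M]$ of $\B$ coming from a suitable marking, and then read off the three assertions from Lemma \ref{twostructs}, Lemma \ref{compatible} and Proposition \ref{extend}. For each $i$ write $e_i:=e(k/k_i)$ for the ramification index of $k/k_i$, and define a marking $M$ on $\B$ by $M(e)=e_i$ whenever the edge $e$ comes from the factor $\B_i$. This is indeed a marking: a closed face of $\B$ is a product $\prod_i F_i$ of closed faces $F_i$ of the $\B_i$, and the edges of that face coming from the factor $F_i$ are precisely those coming from $\B_i$, hence all carry the single value $e_i$. The main claim is that $\tilde{\B}=\B[M]$. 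Since $\nu=\prod_i\nu_i$ with $\nu_i=\nu^{d_i}_{k_i,k}$, and $\B(k)=\prod_i\B_i(k)$ carries the product polysimplicial structure, a subset $F\subseteq\B$ has $\nu(F)$ a face of $\B(k)$ if and only if $F=\prod_i F_i$ with each $\nu_i(F_i)$ a face of $\B_i(k)$; that is, the polysimplicial structure induced on $\B$ by $\nu$ is the product of the structures induced on the $\B_i$ by the $\nu_i$. By Lemma \ref{twostructs} the latter equals $\B_i[e_i]$, and taking the product over $i$ yields exactly $\B[M]$.

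Parts (1) and (2) now follow formally. As observed immediately after its definition (using Lemma \ref{graphaut} and Lemma \ref{compatible}), $\B[M]$ is a polysimplicial complex, so $\tilde{\B}=\B[M]$ is one, which is (1). For (2), $\nu$ is injective because each $\nu_i$ is, and, by the computation in the proof of Lemma \ref{twostructs}, $\nu_i$ maps each chamber of $\B_i[e_i]$ affinely and isomorphically onto a chamber of $\B_i(k)$; passing to products, $\nu$ carries each chamber of $\tilde{\B}$ isomorphically onto a chamber of $\B(k)$ and is affine on it, so $\nu:\tilde{\B}\to\B(k)$ is a polysimplicial chambered embedding.

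For part (3), let $\phi$ be a length preserving, polysimplicial, chambered automorphism of $\B$. I first check that $\phi$ preserves the marking $M$. If $e$ comes from $\B_i$ and $\phi(e)$ comes from $\B_j$, then $-\log|\pi_i|=l(e)=l(\phi(e))=-\log|\pi_j|$, so $|\pi_i|=|\pi_j|$; writing $\pi_k$ for a uniformizer of $k$, the absolute value on $k$ (which restricts to those of $k_i$ and $k_j$) satisfies $|\pi_i|=|\pi_k|^{e_i}$ and $|\pi_j|=|\pi_k|^{e_j}$, whence $e_i=e_j$, i.e. $M(\phi(e))=M(e)$. Proposition \ref{extend}, applied with $A_1=A_2=\B$, $M_1=M_2=M$ and $f=\phi$, then shows that $\phi$ is a polysimplicial chambered self-map of $\B[M]=\tilde{\B}$. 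As $\phi$ is bijective on edges, $\phi^{-1}$ is again length preserving (and polysimplicial and chambered, being the inverse of such a map), so the same argument applies to $\phi^{-1}$; hence $\phi$ is a polysimplicial chambered automorphism of $\tilde{\B}$.

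The only non-routine step is the identity $\tilde{\B}=\B[M]$: one must verify that the structure induced from the \emph{product} embedding $\nu=\prod_i\nu_i$ is the product of the structures induced from the individual $\nu_i$, so that Lemma \ref{twostructs} can be applied one factor at a time. The combinatorial input for this — that the faces of a product of polysimplicial complexes are exactly the products of faces, and that the product decomposition of a polysimplex is unique enough for $\B[M]$ to be well defined — is supplied by Lemma \ref{graphaut}. Everything else is bookkeeping.
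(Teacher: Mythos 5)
Your proposal is correct and follows essentially the same route as the paper: define the marking $M(e)=e(k/k_i)$ on edges coming from $\B_i$, identify $\tilde{\B}$ with $\B[M]$ via Lemma \ref{twostructs}, and deduce part (3) from the fact that length preservation forces marking preservation together with Proposition \ref{extend}. You simply spell out a few steps the paper leaves implicit (that the structure induced by the product embedding is the product of the induced structures, and that the argument also applies to $\phi^{-1}$), which is fine.
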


\begin{proof} Let $M$ is the marking on $\B$ which assigns to an which comes from $\B_i$ the number $e(k/k_i)=\log(|\pi_i|)/\log |\pi|$. By lemma \ref{twostructs}, $\tilde{\B}=\B[M]$. Hence, it is a polysimplicial complex. The second assertion is immediate, by the definition of $\tilde{\B}$. Since the automorphism $\phi$ is length preserving, it preserves the marking $M$. By proposition \ref{extend}, $\phi$ is a polysimplicial, chambered automorphism of $\B[M]=\tilde{\B}$. \end{proof}

% *********************** The commutation with the projection to an apartment ***************

\section{The commutation with the projection to an apartment}

For any $1\leq i \leq r$, let us define (following \cite{ber}) a projection map $\tau_{\Lambda_i}:\B_i \rightarrow \Lambda_i$: Given a norm $\rho$ on $V_i$, we define $\tau_{\Lambda_i}([\rho])=[\tilde{\rho}]$, where $\tilde{\rho}(\sum_j a_j T_{i,j})= \max_j |a_j|\rho(T_{i,j})$. By \cite{ber}, 5.1, $\tau_{\Lambda_i}$ is a simplicial map.

Combining these projections, we get a projection map $\tau_{\Lambda}:\B \rightarrow \Lambda$. 

Let $\Lambda(k)=\nu(\Lambda)$. $\Lambda(k)$ is an apartment of $\B(k)$, equal to $\prod_i \Lambda_i(k)$, where $\Lambda_i(k)$ is the apartment in $\B_i(k)$ corresponding to the basis $T_{i,0},..,T_{i,d_i}$. We have a projection $\tau_{\Lambda(k)}:\B(k) \rightarrow \Lambda(k)$, defined in the same way as $\tau_\Lambda$.

We also group the restriction maps $\delta^{d_i}_{k,k_i}:\B_i(k) \rightarrow \B_i$, defined in the previous section, to a single map $\delta: \B(k) \rightarrow \B$. We have $\delta \circ \nu = id$.

The image of the equivalence class of a norm $\rho$ under $\tau_{\Lambda}$ is determined by the values $\rho(T_{i,j})$. Hence, it follows from the definition that we have a commutative diagram 

\[\begin{array}{ccc}
\B &\overset{\delta}{\leftarrow}&\B(k) \\
\tau_\Lambda \downarrow & & \downarrow \tau_{\Lambda(k)}\\
\Lambda & \overset{\delta}{\xleftarrow{\sim}} & \Lambda(k) \\
\end{array}\]

From which follows the commutative diagram

%\[\begin{array}{ccccccc}
%\B &\overset{\nu}{\hookrightarrow}&\B(k) & & \B &\overset{\nu}{\hookrightarrow}&\B(k)\\
%\tau_\Lambda \downarrow & & \downarrow \tau_\Lambda & &  \uparrow & & \uparrow \\
%\Lambda & = & \Lambda & & \Lambda & = & \Lambda\\
%\end{array}\]

\[\begin{array}{ccc}
\B &\overset{\nu}{\hookrightarrow}&\B(k)\\
\tau_\Lambda \downarrow & & \downarrow \tau_{\Lambda(k)}\\
\Lambda &  \overset{\nu}{\xrightarrow{\sim}} & \Lambda(k)\\
\end{array}\]

In these diagrams, we may replace $\B$ by $\tilde{\B}$, as the two complexes have the same underlying sets.

\begin{prop} \label{commute} Let $\psi: \tilde{\B} \rightarrow \B(k)$ be a polysimplicial, chambered, injective map, such that the following diagram is commutative:
\[\begin{array}{ccc}
\tilde{\B} &\overset{\psi}{\rightarrow}&\B(k) \\
\uparrow & & \uparrow \\
\Lambda & \overset{\nu}{\rightarrow} & \Lambda(k) \\
\end{array}\]
Then the diagram
\[\begin{array}{ccc}
\tilde{\B} &\overset{\psi}{\rightarrow}&\B(k) \\
\tau_\Lambda \downarrow & & \downarrow \tau_{\Lambda(k)}\\
\Lambda &  \overset{\nu}{\rightarrow} & \Lambda(k) \\
\end{array}\]
is commutative too.
\end{prop}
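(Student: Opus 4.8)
The plan is to prove the two maps $\tau_{\Lambda(k)}\circ\psi$ and $\nu\circ\tau_\Lambda$ from $\tilde{\B}$ to $\Lambda(k)$ coincide. First I would reduce this to a statement about vertices, and then propagate that statement along galleries starting inside the basic apartment.

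\emph{Reduction to vertices.} Both maps are affine on every chamber $\Phi$ of $\tilde{\B}$. Since $\psi$ is chambered it restricts to an affine isomorphism of $\Phi$ onto a chamber $\psi(\Phi)$ of $\B(k)$, and $\tau_{\Lambda(k)}=\prod_i\tau_{\Lambda_i(k)}$ is simplicial, hence affine on the closed polysimplex $\psi(\Phi)$; so $\tau_{\Lambda(k)}\circ\psi$ is affine on $\Phi$. Likewise $\Phi$ lies inside a single chamber of $\B$, on which the simplicial map $\tau_\Lambda=\prod_i\tau_{\Lambda_i}$ is affine, and $\nu$ is a polysimplicial embedding of $\tilde{\B}$ into $\B(k)$ by Proposition \ref{subdivide}, hence affine on $\Phi$; so $\nu\circ\tau_\Lambda$ is affine on $\Phi$. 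As a polysimplex is the affine hull of its vertex set, it therefore suffices to prove $\tau_{\Lambda(k)}(\psi(v))=\nu(\tau_\Lambda(v))$ for every vertex $v$ of $\tilde{\B}$; equivalently, that the set $\mathcal{C}$ of chambers of $\tilde{\B}$ on which the two maps agree consists of all chambers.

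\emph{Base case and propagation.} If $\Phi\subseteq\Lambda$ and $v$ is a vertex of $\Phi$, then $\tau_\Lambda(v)=v$ (each $\tau_{\Lambda_i}$ fixes $\Lambda_i$ pointwise), $\psi(v)=\nu(v)$ by the commutativity of the given square, and $\tau_{\Lambda(k)}(\nu(v))=\nu(v)$ because $\nu(v)\in\Lambda(k)$; hence both maps carry $v$ to $\nu(v)$, so $\Phi\in\mathcal{C}$. Since $\tilde{\B}$ is a polysimplicial complex, every chamber is joined by a gallery to a chamber of $\tilde{\B}$ contained in $\Lambda$ (for instance one inside $\Delta$), so it remains to show that $\mathcal{C}$ is stable under passing to an adjacent chamber: if $\Phi,\Phi'$ are adjacent chambers meeting in a facet $F$ and $\Phi\in\mathcal{C}$, then the two maps already agree on $F\subseteq\Phi$, and we must deduce agreement at the vertices of $\Phi'$ lying off $F$.

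\emph{The inductive step --- the main obstacle.} This is where the work lies: $\tau_\Lambda$ and $\tau_{\Lambda(k)}$ are retractions that collapse faces rather than carry chambers to chambers, so agreement on $\Phi$ does not formally pass to $\Phi'$. I would argue inside an apartment $\Sigma$ of $\tilde{\B}$ containing both $\Phi$ and $\Phi'$: there $\psi$ restricts to a polysimplicial isomorphism of $\Sigma$ onto an apartment $\psi(\Sigma)$ of $\B(k)$, on which $\tau_{\Lambda(k)}$ admits the explicit piecewise-linear (``diagonal-ceiling'') description coming from the formula for each $\tau_{\Lambda_i(k)}$, and likewise $\tau_\Lambda$ admits such a description on $\Sigma$. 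Using these descriptions one reduces the required passage across the wall through $F$ to a combinatorial compatibility, which one verifies by combining $\psi|_F=\nu|_F$ with the already-established identity $\tau_{\Lambda(k)}\circ\nu=\nu\circ\tau_\Lambda$ (the commutative square recorded before the statement). Concretely, the content is to control exactly how the retraction $\tau_\Lambda$ ``folds'' the chamber $\Phi'$ relative to $\Phi$ across $F$, and to match this with the corresponding fold of $\tau_{\Lambda(k)}$ on $\psi(\Sigma)$ through $\nu$; this is the step I expect to require the most care, and granting it the induction closes. One simplification worth noting: since both sides of the asserted identity take values in $\Lambda(k)$, on which $\delta$ is injective, and since $\delta\circ\tau_{\Lambda(k)}=\tau_\Lambda\circ\delta$ and $\delta\circ\nu=\mathrm{id}$, the proposition is equivalent to the single identity $\tau_\Lambda\circ\delta\circ\psi=\tau_\Lambda$; the map $\delta\circ\psi$ fixes $\Lambda$ pointwise, and one is reduced to showing that it preserves the fibres of $\tau_\Lambda$, which comes down to the same combinatorial fact about folding.
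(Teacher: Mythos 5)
Your reduction to vertices and your base case are fine, but the heart of your argument --- the inductive step across a wall --- is not actually carried out; you state it as ``a combinatorial compatibility'' to be verified and then say ``granting it the induction closes.'' That step is precisely where the difficulty lives, and it is not clear it goes through in the form you describe. The map $\tau_\Lambda$ is not the standard building retraction centered at a chamber of $\Lambda$ (for which wall-crossing induction is routine because the retraction is chambered); it is defined by the norm formula $\tilde{\rho}(\sum_j a_j T_{i,j})=\max_j|a_j|\rho(T_{i,j})$ and can collapse a chamber onto a face of smaller dimension. In that situation, knowing $\tau_{\Lambda(k)}\circ\psi=\nu\circ\tau_\Lambda$ on $\Phi$ and on the facet $F=\Phi\cap\Phi'$ does not formally determine either side on the vertices of $\Phi'$ off $F$: you would need an explicit description of how each retraction behaves on $\Phi'$ and a proof that $\psi$ transports one description to the other, and that is exactly the content you defer. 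So as written the proposal has a genuine gap at its central step.

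The paper avoids gallery induction on the retraction altogether. It characterizes $\tau_{\Lambda(k)}(x)$, for a vertex $x$, as the unique vertex $y\in\Lambda(k)$ minimizing a combinatorial distance $f(x,y)$ defined via lattice indices, shows (Lemma \ref{minlen} and the lemma following it) that $f(x,y)$ equals the length of a minimal \emph{directed} path from $x$ to $y$, computable inside any apartment containing both, where the directed graph structure is determined by the $1$-skeleton together with the labelling $C$. Since $\psi$ agrees with $\nu$ on $\Lambda$, the labellings $C$ and $C\circ\psi$ agree on a chamber, hence everywhere by the uniqueness of labellings (Lemma \ref{lem:label}); therefore $\psi$ preserves the directed graph structure and hence $f$, and the minimization characterization immediately gives $\tau_{\Lambda(k)}(\psi(x))=\nu(\tau_\Lambda(x))$ at every vertex. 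If you want to salvage your gallery approach, the missing ingredient is some such global, $\psi$-invariant characterization of the retraction; without it, the wall-crossing step remains unproved.
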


\begin{proof}
Let us first understand better the projection $\tau_{\Lambda(k)}$. 
Let us define a function $f_i$ on pairs of vertices of $\B_i(k)$ by $$f_i([M],[L])=[M:L]$$ where the lattices $M$ and $L$ are chosen so that $M \supseteq L$ and $\pi_i M \nsupseteq L$, and $[M:L]$ means the length of $M/L$ as an $O_{k_i}$-module. Note that this is not a symmetric function. It is proven in \cite{ber} that for a vertex $x$ of $\mathcal{B}_i(k)$, $\tau_{\Lambda_i(k)}(x)$ is the unique vertex $y$ of $\Lambda_i(k)$ for which $f_i(x,y)$ is minimal.
Let us give an equivalent definition of $f_i$. We endow the set of vertices of $\B_i(k)$ with the structure of a directed graph in the following way: There is an edge from $x$ to $y$ if and only if $f_i(x,y)=1$. 
\begin{lemma}\label{minlen}
For any vertices $x,y$ of $\B_i(k)$,  $f_i(x,y)$ is the length of the minimal directed path from $x$ to $y$. 
\end{lemma}
\begin{proof}
Let $M$ and $L$ be lattices of $V_i$ such that $x=[M]$, $y=[L]$,  $M \supseteq L$ and $\pi_i M \nsupseteq L$. Then $f(x,y)=[M:L]$ and we can replace the inclusion $M\supseteq L$ by a chain of $[M:L]$ inclusions of relative degree $1$. This shows that the distance between $x$ and $y$ is at most $f(x,y)$. On the other hand, any directed path from $x$ and $y$ can be represented by lattices $M=M_0 \supset M_1 \supset .. \supset M_k$ such that $[M_j:M_{j+1}]=1$ for all $j$, and $M_k= \pi_i^n L$ for some $n>0$. Hence $k=[M:M_k] \geq [M:L]$. We conclude that the minimal path length is $[M:L]$.
\end{proof}
The vertices of $\B_i(k)$ also have an undirected graph structure, coming from the 1-skeleton of $\B_i(k)$: Two vertices $x$ and $y$ are connected in this graph if they can be represented by lattices $L$ and $M$ such that $L \supseteq M \supseteq \pi_i L$. 

Let us endow $\B_i(k)$ with a labelling $C_i$ with values in $\Z / d_i \Z$ as in definition \ref{def:label}. Then, for $x,y\in \B_i(k)$, there is an edge from $x$ to $y$ in the directed graph if and only if there is an edge between $x$ and $y$ in the undirected graph, and $C_i(y)=C_i(x)+1$. 

Similarly, for two vertices $x=([M_1],..,[M_r])$ and $y=([L_1],..,[L_r])$ of $\B(k)$, let us define $f(x,y) = \sum_{i=1}^r{f_i([M_i],[L_i])}$. Then clearly $\tau_{\Lambda(k)}(x)$ is the vertex $y\in \Lambda(k)$ for which $f(x,y)$ is minimal. We  also endow the vertices of $\B(k)$ with the product directed graph structure (there is an edge from $x$ to $y$ when there exists $i$ such that there is an edge between the $i^\text{th}$ coordinate of $x$ and the $i^\text{th}$ coordinate of $y$ in the directed graph of $\B_i(k)$, and the other coordinates are identical in $x$ and $y$), and we let $\B(k)$ have the product labelling, with values in $\prod_i \Z / d_i \Z$. 
There is an edge from a vertex $x$ of $\B(k)$ to a vertex $y$ if and only if $x$ and $y$ are connected in the $1$-skeleton of $\B(k)$ and $C(y)=C(x)+(0,0,..0,\underset{i}{1},0,..0)$ for some $i$. By lemma \ref{minlen}, $f(x,y)$ is equal to the length of a minimal directed path from $x$ to $y$. 
\begin{lemma}
For two vertices of $\B(k)$, $x$ and $y$, $f(x,y)$ is equal to the length of a minimal directed path from $x$ to $y$ in any apartment of $\B(k)$ which contains $x$ and $y$.
\end{lemma}\begin{proof}
It is enough to prove that for two vertices of $\B_i(k)$, $x$ and $y$, $f_i(x,y)$ is equal to the length of a minimal path from $x$ to $y$ in any apartment of $\B_i(k)$ which contains $x$ and $y$.
Let $v_0,..,v_{d}$ be a basis of such an apartment (where $d=d_i$), and let $L$ and $M$ be two lattices such that$x=[M]$, $y=[L]$, $M \supseteq L$ and $\pi_i M \nsupseteq L$. Then we can write $M=\langle{\pi_i}^{m_0}v_0,..,{\pi_i}^{m_{d_i}}v_{d_i}\rangle, L=\langle{\pi_i}^{l_0}v_0,..,{\pi_i}^{l_{d_i}}v_{d_i}\rangle$. We have $m_j \geq l_j$ for all $j$, $[M:L]=\sum_j (m_j-l_j)$. It is easy to find a path of length $[M:L]$ from $[M]$ to $[L]$ in the apartment (decreasing each time the exponent of $\pi_i$ in one of the coordinates by 1. By lemma \ref{minlen}, the proof is complete.
\end{proof}

We let $\tilde{\B}$ inherit from $\B(k)$, in addition to its polysimplicial structure, the labelling and the directed graph structure. Then it is clear that the directed graph structure is determined by the 1-skeleton and the labelling in the same way. For two vertices $x$ and $y$ of $\tilde{\B}$, if $A$ is an apartment of $\tilde{\B}$ containing both vertices, then by \cite{brt}, corollary 2.8.6, $\nu(A)$ is an apartment of $\B(k)$ containing $\nu(x)$ and $\nu(y)$. It follows from the above lemma that $f(\nu(x), \nu(y))$ is equal to the length of the minimal path from $x$ to $y$ in $\tilde{\B}$. We call this function $f(x,y)$ as well. Hence $\tau_\lambda(x)$ is the vertex $y$ of $\Lambda$ which minimizes $f(x,y)$.

By the assumption on $\psi$, for any vertex $x\in \Lambda$ we have $\psi(x)=\nu(x)$, hence $C(\psi(x))=C(\nu(x))=C(x)$. The two labellings  $C$ and $C \circ \psi$ of $\tilde{\B}$ are therefore identical on $\Lambda$. By lemma \ref{lem:label}, they are identical on $\tilde{\B}$. We conclude that $\psi$ preserves the labelling $C$. Since it clearly preserves the 1-skeleton structure, we get that $\psi$ preserves the directed graph structure.  

Let $x$ and $y$ be vertices of $\tilde{\B}$, and let $A$ be an apartment of $\tilde{\B}$ which contains $x$ and $y$. By \cite{brt}, loc. cit., $\psi(A)$ is an apartment of $\B(k)$ which contains $\psi(x)$ and $\psi(y)$. Since $\psi$ preserves the directed graph structure, $\psi$ carries a path from $x$ to $y$ in $A$ to a path from $\psi(x)$ to $\psi(y)$ in $\psi(A)$. By the above lemma, we conclude that $f(\psi(x),\psi(y))=f(x,y)$.

Now, for a vertex $x_0$ of $\tilde{\B}$, let $t=\tau_\Lambda(x)$. Then $t$ minimizes the function $g(y):=f(x_0,y)$ on the vertices of $\Lambda$. Since $\psi=\nu$ on the vertices of $\Lambda$, we have $g(y)=f(\psi(x_0),\psi(y))=f(\psi(x_0),\nu(y))$ for all $y \in \Lambda$. Hence, $\nu(t)$ minimizes the function $h(y):=g(\psi(x_0),y)$ on $\nu(\Lambda)=\Lambda(k)$. This proves that $\tau_{\Lambda(k)}(\psi(x))=\nu(\tau_\Lambda(x))$ and finishes the proof of proposition \ref{commute}. \end{proof}

% *************** The end of the proof ******************************

\section{Proof of the main theorem}

We are now ready to prove theorem \ref{T}. Let $\phi \in \Aut_K(\X)$. By proposition \ref{induce}, $\phi(\B)=\B$, and $\phi$ commutes with $\tau:\X \rightarrow \B$. By corollary \ref{normalform}, there exist $g\in G$, $r_i \in \{0,1\}$ for $1\leq i \leq r$, and a permutation $\mu$ of $\{1,..,r\}$ such that $d_{\mu(i)}=d_i$ for all $i$, and the automorphism $\phi'$ of $\B$ defined by $\phi':=(\prod_i \lambda_i ^ {r_i}) g \phi$ satisfies:
\begin{enumerate}
\item
$\phi'$ is polysimplicial, chambered and length preserving.
\item
$\phi'(\Lambda)=\Lambda$.
\item
$\phi'|_\Lambda = \sigma_\mu$ (see definition \ref{sigma_mu}).
\end{enumerate}

It is enough to prove that $g\phi$ is of the desired form, so we assume that $g=id$, without loss of generality. By proposition \ref{subdivide}, $\phi'$ is a polysimplicial and chambered automorphism of $\tilde{\B}$. 

Since $\B(k)=\prod_i B^{d_i}_k$, $\mu$ acts on $\B(k)$ by a permutation of the factors. We have the following commutative diagrams of polysimplicial maps:

\[\begin{array}{ccc}
\Lambda &\overset{\sigma_\mu}{\rightarrow}&\Lambda\\
\nu \downarrow & & \downarrow \nu \\
\Lambda(k) &  \overset{\mu}{\rightarrow} & \Lambda(k)\\
\end{array}\]

and

\[\begin{array}{ccccccc}
  \tilde{\B} &\overset{\phi'}{\rightarrow}&\tilde{\B}&\overset{\nu}{\hookrightarrow}&\B(k) &\overset{\mu^{-1}}{\rightarrow}&\B(k)\\
  \uparrow & & \uparrow & & \uparrow & & \uparrow \\
  \Lambda &\overset{\sigma_{\mu}}{\rightarrow}&\Lambda& \overset{\nu}{\rightarrow} &\Lambda(k) &\overset{\mu^{-1}}{\rightarrow}&\Lambda(k)\\
 \end{array}\]

Where the vertical arrows are inclusion maps. 

Since $\nu \sigma_\mu = \mu \nu$, the composition of the lower row in the above diagram is equal to $\nu$. 

Let $\psi$ be the composition of the upper row: $$\psi = \mu^{-1} \circ \nu \circ \phi' = \mu^{-1} \circ \nu \circ (\prod_i \lambda_i ^ {r_i}) \circ \phi$$ 
We get the commutative diagram
\[\begin{array}{ccc}
\tilde{\B} &\overset{\psi}{\rightarrow}&\B(k) \\
\uparrow & & \uparrow \\
\Lambda & \overset{\nu}{\rightarrow} & \Lambda(k) \\
\end{array}\]
By lemma \ref{commute}, the vertical arrows can be reversed, to form the commutative diagram

\[\begin{array}{ccc}
\tilde{\B} &\overset{\psi}{\rightarrow}&\B(k) \\
\tau_\Lambda \downarrow & & \downarrow \tau_{\Lambda(k)}\\
\Lambda & \overset{\nu}{\rightarrow} & \Lambda(k) \\
\end{array}\]

Hence, we have a commutative diagram (in which the composition of the middle row is $\psi$):
$$
\begindc{\commdiag}[1]
\obj(20,140)[A]{$\X$}
\obj(60,140)[B]{$\X$}
\obj(20,100)[D]{$\tilde{\B}$}
\obj(60,100)[E]{$\tilde{\B}$}
\obj(100,100)[F]{$\tilde{\B}$}
\obj(140,100)[G]{$\B(k)$}
\obj(180,100)[H]{$\B(k)$}
\obj(20,60)[C]{$\Lambda$}
\obj(100,60)[I]{$\Lambda$}
\obj(140,60)[J]{$\Lambda(k)$}
\obj(180,60)[K]{$\Lambda(k)$}
\mor{A}{B}{$\phi$}
\mor{D}{E}{$\phi$}
\mor{E}{F}{$\prod\limits_i \lambda_i ^ {r_i}$}
\mor{F}{G}{$\nu$}[\atright, \injectionarrow]
\mor{G}{H}{$\mu^{-1}$}
\mor{D}{C}{$\tau_{\Lambda}$}
\mor{F}{I}{$\tau_{\Lambda}$}
\mor{H}{K}{$\tau_{\Lambda(k)}$}
\mor{G}{J}{$\tau_{\Lambda(k)}$}
\mor{I}{J}{$\nu$}
\mor{J}{K}{$\mu^{-1}$}
\mor{A}{D}{$\tau$}
\mor{B}{E}{$\tau$}[\atleft, \solidarrow]
\cmor((20,50)(30,40)(60,30)(100,30)(140,30)(170,40)(180,50)) \pup(100,20){$\nu$}
\enddc
$$

It follows that 

\begin{equation*}\mu^{-1}\circ \nu \circ \tau_\Lambda \circ \prod\limits_i \lambda_i ^ {r_i} \circ \tau \circ \phi = \nu \circ \tau_\Lambda \circ \tau \end{equation*}

Hence

\begin{equation}\label{finaleq}\mu^{-1}\circ \nu \circ \tau_\Lambda \circ \prod\limits_i \lambda_i ^ {r_i} \circ \tau = \nu \circ \tau_\Lambda \circ \tau \circ \phi^{-1}\end{equation}

Let us write this equality more explicitely. Note that all the objects in the commutative diagram are spaces of equivalence classes of norms or seminorms, and on each space the real valued functions $|t_{i,j}([\rho])|=\frac{\rho(T_{i,j})}{\rho(T_{i,0})}$ are well defined. These functions commute with the maps $\tau, \tau_{\Lambda}, \tau_{\Lambda(k)}$ and $\nu$. On the apartments $\Lambda$ and $\Lambda(k)$ these functions are coordinates that define the isomorphism to a Euclidean space. We will now evaluate the functions $|t_{i,j}|$ on both sides of (\ref{finaleq}).

Let $x\in \X$, $1\leq i \leq r$ and $1 \leq j \leq d_i$. Then
$$ |t_{i,j}((\nu \circ \tau_\Lambda \circ \tau \circ \phi^{-1})(x))| = |t_{i,j}(\phi^{-1}(x))|$$
On the other hand,
$$ |t_{i,j}((\nu \circ \tau_\Lambda \circ \prod\limits_i \lambda_i ^ {r_i} \circ \tau)(x))|=|t_{i,j}(\lambda_i^{r_i}(\tau_i(\pi_{X_i}(x))))|$$
If $r_i=0$, we have $$ |t_{i,j}((\nu \circ \tau_\Lambda \circ \prod\limits_i \lambda_i ^ {r_i} \circ \tau)(x))|=|t_{i,j}(x)|$$
Let us assume now that $r_i=1$. Let $x=[\rho]$, where $\rho$ is a multiplicative seminorm on $K[T_{i,j}]$. Then $\tau_i(\pi_{\X_i}(x))=[\rho|_{V_i}]\in \B_i$. By lemma \ref{involution}, 
$\lambda_i(\tau_i(\pi_{\X_i}(x)))=\lambda_i([\rho|_{V_i}])=[\rho']$, where $\rho'(v)=\max\limits_{0\neq w \in V} \frac {|\langle v,w\rangle|}{\rho(w)}$. Let us define 
\begin{eqnarray*} S_{i,j}(\rho) &=& \rho'(T_{i,j})=\max\limits_{0\neq w=\sum a_k T_{i,k} \in V_i} \frac {|\langle T_{i,j},w\rangle|}{\rho(w)}  =\\
&=& \max \frac{|a_j|}{\rho(\sum a_k T_{i,k})} = \max\limits_{a_1,..,\hat{a_j}, .., a_{d_i} \in k_i} \frac1{\rho(T_{i,j} + \sum\limits_{k \neq j}a_kT_{i,k})}\end{eqnarray*}
and $$s_{i,j}([\rho])=\frac{S_{i,j}(\rho)}{S_{i,0}(\rho)}$$
Then $s_{i,j}$ is a well defined real valued function on $\X$, and$$ |t_{i,j}((\nu \circ \tau_\Lambda \circ \prod\limits_i \lambda_i ^ {r_i} \circ \tau)(x))|=|t_{i,j}([\rho'])|=s_{i,j}(x)$$
This implies, for all $1\leq i \leq r$ and $1\leq j \leq d_i$, 
\[
|t_{i,j}(\phi^{-1}(x))| = \left \{ \begin{array}{cl}
|t_{\mu^{-1}(i),j}(x)| & r_i= 0 \\
s_{\mu^{-1}(i),j}(x) & r_i= 1 \end{array} \right.
\]
Let us introduce the action of the torus of $G$ on $\X$. For $1\leq i \leq r$, $0\leq j\leq d_i$ and $\alpha\in k_i^*$, let $m_{i,j,\alpha}$ be the automorphism of $\X$ which multiplies the $(i,j)$ coordinate $T_{i,j}$ by $\alpha$. Let $T$ be the torus generated by all these automorphisms, for $j>0$. Let us note that
\[
S_{i,j}(m_{i',j',\alpha}(x))= \left \{ \begin{array}{ll}
|\alpha|^{-1}S_{i,j}(x) & (i,j)=(i',j') \\
S_{i,j}(x) & (i,j) \neq (i',j') \end{array} \right.
\]
Hence, the same identities hold for the $s_{i,j}$'s, for $j,j' > 0$.
Obviously we have for the $t_{i,j}$'s
\[
|t_{i,j}(m_{i',j',\alpha}(x))|= \left \{ \begin{array}{ll}
|\alpha|\cdot|t_{i,j}(x)| & (i,j)=(i',j') \\
|t_{i,j}(x)| & (i,j) \neq (i',j') \end{array} \right.
\]
whenever $j,j'>0$. It follows that the functions
$$ u_{i,j}(x) := t_{i,j}(\phi^{-1}(x)) \cdot t_{\mu^{-1}(i),j}^{(-1)^{1+r_i}} $$
satisfy that $|u_{i,j}|$ is invariant under the action of $T$. 

\begin{lemma} Any bounded analytic function on $\X$ is constant. \end{lemma}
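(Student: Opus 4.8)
The plan is to reduce the statement to the known case $r=1$ proved in \cite{ber}, using the product structure of $\X$ established in proposition \ref{prodstruct} together with the exhaustion of $\X$ by the affinoids $\X[n]$. First I would recall that a bounded analytic function $f$ on $\X$, by proposition \ref{prop:uniq} and the connectedness of each $\X[n]$, is determined by its restrictions to the affinoids $\X[n] = \prod_i \X_i[n]$, and it suffices to show each such restriction is constant, since the values on nested affinoids agree.

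On a fixed $\X[n] = \prod_i \X_i[n]$, the key point is that a bounded analytic function on a product of (strictly) affinoid spaces is, up to uniform approximation, a finite sum of products $\prod_i g_i$ with $g_i$ analytic on $\X_i[n]$; more precisely, the affinoid algebra of $\X[n]$ is the completed tensor product $\bigotimes_i \AF_i$ of the affinoid algebras $\AF_i$ of the $\X_i[n]$, so the elements whose absolute value one wants to bound are limits of such sums. The cleaner route, which I would follow, is to fix all but one coordinate: for a point $b = (b_1,\dots,\widehat{b_i},\dots,b_r)$ in $\prod_{i'\neq i}\X_{i'}$, the restriction of $f$ to the slice $\X_i \times \{b\} \cong \X_i$ is a bounded analytic function on the single Drinfeld space $\X_i = \Omega^{d_i}_{k_i,K}$, hence constant by the case $r=1$ of the lemma (this is the boundedness lemma used in \cite{ber}; alternatively it follows because $\Omega^{d_i}_{k_i,K}$ contains arbitrarily large multi-annuli on which a bounded analytic function must be constant by a maximum-principle argument on the building side). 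Doing this for each $i$ in turn, and using proposition \ref{prop:uniq} to propagate the conclusion off the slices, shows $f$ is locally constant, and then constant since $\X$ is connected (which again follows from the path-connectedness of each $\X[n]$ together with the fact that the $\X[n]$ form an increasing exhaustion with nonempty intersections).

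Concretely, the steps in order are: (i) reduce to showing $f$ is constant on each $\X[n]$, using proposition \ref{prop:uniq} and the exhaustion $\X = \bigcup_n \X[n]$; (ii) invoke the $r=1$ case to conclude that $f$ is constant on every slice $\X_i \times \{b\}$; (iii) vary $i$ and $b$ to deduce that all partial derivatives (equivalently, all the ``separately varying'' directions) of $f$ vanish, so $f$ is constant along each factor and hence constant on $\X[n]$; (iv) match constants across different $n$ using connectedness. The main obstacle I anticipate is step (ii)'s legitimacy in the Berkovich/product setting: one must be sure that restricting a global bounded analytic function on $\X$ to the closed subspace $\X_i \times \{b\}$ yields an analytic function that is \emph{still} bounded on all of $\X_i$ (the bound being inherited from the bound on $\X$), and that the $r=1$ boundedness result applies verbatim to $\Omega^{d_i}_{k_i,K}$ over the possibly-larger field $K$. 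Both are true — the former because sup-norms only decrease under restriction to a subspace, the latter because the cited lemma in \cite{ber} is stated for exactly this situation — so the argument goes through, but it is the point that deserves care rather than the formal reduction.
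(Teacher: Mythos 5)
Your overall strategy --- reduce to the $r=1$ case of \cite{ber} by fixing all but one coordinate --- is the same as the paper's, which argues by induction on $r$. But the step where you globalize from constancy on slices to constancy on $\X$ has a genuine gap. You propose to use Proposition \ref{prop:uniq} ``to propagate the conclusion off the slices''; that proposition is an identity principle for functions vanishing on an \emph{open} subset, and a slice $\X_i\times\{b\}$ is a fiber of the projection, a nowhere dense closed analytic subset, so the proposition gives nothing here. Likewise, the appeal to ``all partial derivatives vanishing'' is not a meaningful argument in this setting. There is also a more structural obstruction you pass over: a point of a Berkovich product is not a tuple of points of the factors, so the implicit ``move one coordinate at a time'' argument that would combine constancy along each factor into global constancy does not make literal sense for general points of $\X$.

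The paper's proof supplies exactly the missing ingredient: it fixes coordinates only at \emph{rigid} points (where the one-coordinate-at-a-time argument is valid, since rigid points of $\X$ genuinely are tuples of rigid points of the $\X_i$), concludes by induction that $f$ is constant on the set of all rigid points of $\X$, and then finishes by the density of rigid points in a strictly affinoid space (\cite{ber2}, Proposition 2.1.15) applied on the exhausting affinoids $\X[n]$. If you replace your propagation step by this rigid-point-plus-density argument, your proof becomes the paper's. Your other concerns --- that the restriction of $f$ to a slice is still bounded, and that the $r=1$ lemma applies over the possibly larger residue field of the point $b$ --- are correctly identified and correctly resolved.
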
 \begin{proof} The case $r=1$ is proved in \cite{ber}, lemma 3. The general case follows easily by induction on $r$: Any such function must be constant when one of the coordinates is fixed to be a rigid point of one of the $\X_i$'s, by the inductive hypothesis. Hence the function is constant on the rigid points of $\X$, and therefore (by \cite{ber2}, proposition 2.1.15), on the entire space. \end{proof} 

For any $g\in T$, the function $\frac{u_{i,j}(gx)}{u_{i,j}(x)}$ is bounded, of absolute value $1$, hence it is constant. If we restrict $u_{i,j}(x)$ to the multi-annulus $\tau^{-1}(\Delta^{\circ})$ and develop it as a power series in the $t_{i,j}$s
$$ u_{i,j}(x) = \sum a_I t^I, $$ we have $$ u_{i,j}(gx) = \alpha_g u_{i,j}(x) $$ for a constant $\alpha_g\in K$ of absolute value $1$. This implies that for any $g=(g_{i,j})$, and any multi-index $I$, we have $g^I a_I = \alpha_g a_I$, hence if $a_I \neq 0$, then $\alpha_g = g^I$. Since $\alpha_g$ does not depend on $I$, $a_I$ can be nonzero only for $I=(0,..,0)$. It follows that the functions $u_{i,j}$ are constant on the multi-annulus. By proposition \ref{prop:uniq}, $u_{i,j}$ is constant on the whole of $\X$ for all $i$ and $j>0$.   

Thus, we have completely determined the map $\phi^{-1}$: Let us write it using the affine coordinates $t_{i,j}$. There exist constants $a_{i,j}\in K$ such that
$$\phi^{-1}((t_{i,j})_{i,j})=(a_{i,j}t_{\mu(i),j}^{(-1)^{r_i}})_{i,j}.$$
Now, fixing $i$ and $j$, and letting $m=\mu(i)$, for any $t\in K \setminus k_{i}$ there exists a point $x \in \X$ such that $t_{i,j}(x)=t$. It follows that the map $t \mapsto a_{i,j} t^{(-1)^{r_i}}$ is a bijection between $K \setminus k_m$ and $K \setminus k_i$. Since this map is also an automorphism of $K^*$, we get that this map is a bijection between $k_m$ and $k_i$. Hence, $a_{i,j} \in k_i \cap k_m$ and $k_i = k_m$. Finally, no $r_i$ can be equal to $1$,  because by our definition, $r_i$ can only be $1$ when $d_i \geq 2$, and we can always generate $x,y\in K \setminus k_i$ such that $1,x,y$ are linearly independent over $k_i$ but $1,\frac1x, \frac1y$ are dependent. For example we may take $t\in K$ that does not lie in a quadratic extension of $k_i$ (extending $K$ if necessary), and define $x=\frac1t$ and $y=\frac1{t+1}$. This implies that we cannot have an automorphism of $\X_i$ of the form $(t_{i,j})_j \mapsto (\frac{\alpha}{t_{i,j}})_j$, hence $r_i=0$ for all $i$. Therefore, We have reduced $\phi^{-1}$ to a map which permutes coordinates with equal fields, and multiplies each coordinate by an element of the corresponding field. This concludes the proof of theorem \ref{T}.

% **************************  Corollaries ********************

\section{Corollaries of theorem \ref{T}}

\begin{theorem} \label{S}
If $\X=\prod_{i=1}^r\Omega_{k_i,K}^{d_i}$ is isomorphic, as a $K$-analytic space, to $\Y=\prod_{j=1}^s\Omega_{k'_j,K}^{d'_j}$ then $r=s$ and after rearranging the factors we get $k_i = k'_i$ and $d_i=d'_i$. \end{theorem}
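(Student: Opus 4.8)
The plan is to rerun the proof of Theorem~\ref{T}, but with source $\X$ and target $\Y$ allowed to differ. Let $\phi\colon\X\to\Y$ be a $K$-analytic isomorphism, write $\B=\prod_i\B_i$ for the building inside $\X$ and $\B'=\prod_j\B'_j$ (with $\B'_j=B^{d'_j}_{k'_j}$) for the building inside $\Y$, and similarly $\Lambda,\Lambda',G,G'$, etc. The first thing I would check is that every auxiliary result of Sections~6--10 used in the proof of Theorem~\ref{T} is local or structural and survives verbatim for a morphism between two products of the given shape: $\B$ and $\B'$ are exactly the sets of maximal points of $\X$ and $\Y$, so $\phi(\B)=\B'$; the argument of Proposition~\ref{induce} then shows that the induced homeomorphism $\phi\colon\B\to\B'$ is polysimplicial, chambered, length preserving, and commutes with the projections $\tau\colon\X\to\B$, $\tau\colon\Y\to\B'$. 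The only genuine modification needed is to take $k$ to be the compositum in $K$ of \emph{all} of $k_1,\dots,k_r,k'_1,\dots,k'_s$ (still a complete local field, with all of $k/k_i$, $k/k'_j$ finite), so that both $\B$ and $\B'$ embed, via the maps of Section~8, into products of copies of $B^d_k$; with this choice Propositions~\ref{subdivide} and \ref{commute} apply unchanged to $\tilde\B$, $\tilde\B'$ and $\B(k)$.

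Next I would read off the discrete invariants. A chamber of $\B$ is a polysimplex whose $1$-skeleton is the graph $\prod_{i=1}^r[d_i]$, and a chamber of $\B'$ one whose $1$-skeleton is $\prod_{j=1}^s[d'_j]$; since $\phi$ restricts to a polysimplicial isomorphism of chambers, applying Lemma~\ref{graphaut}(1) to the resulting graph isomorphism and to its inverse forces $r=s$ and the existence of a bijection $\mu$ of $\{1,\dots,r\}$ with $d_i=d'_{\mu(i)}$ for every $i$. Once this matching of factors is fixed, the argument of Corollary~\ref{normalform}---composing $\phi$ on the target side with a suitable $g\in G'=\prod_jPGL_{d'_j+1}(k'_j)$ (using Lemma~\ref{citebrt} to move $\phi(\Lambda)$ onto $\Lambda'$ and $\phi(\Delta)$ onto $\Delta'$, then adjusting by powers of the cyclic automorphisms of the $\B'_j$) and with the involutions $\lambda_j$ of the $\B'_j$---produces $g\in G'$, numbers $r_i\in\{0,1\}$ and the permutation $\mu$ so that $\phi':=\big(\prod_j\lambda_j^{\epsilon_j}\big)\,g\,\phi$ restricts on $\Lambda$ to the simplicial isomorphism $\sigma_\mu\colon\Lambda\to\Lambda'$ of Definition~\ref{sigma_mu}.

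Then I would repeat the diagram chase of Section~10, now with $\tilde\B$, $\tilde\B'$ and $\B(k)=\prod_i B^{d_i}_k$ (which equals $\prod_j B^{d'_j}_k$ after the identification by $\mu$), using Proposition~\ref{commute} to reverse the vertical arrows and conclude, exactly as there, that $\phi^{-1}$ is determined: in the affine coordinates $t_{i,j}$ of $\X$ and $t'_{i,j}$ of $\Y$ there are constants $a_{i,j}\in K$ with
\[
\phi^{-1}\big((t'_{i,j})_{i,j}\big)=\big(a_{i,j}\,(t'_{\mu(i),j})^{(-1)^{r_i}}\big)_{i,j}.
\]
The conclusion is now the verbatim analogue of the last paragraph of the proof of Theorem~\ref{T}: for $t\in K\setminus k'_{\mu(i)}$ one produces a point of $\Y$ whose $(\mu(i),j)$-coordinate is $t$, and comparing the values its image is allowed to take shows that $s\mapsto a_{i,j}s^{(-1)^{r_i}}$ is a bijection of $K\setminus k'_{\mu(i)}$ onto $K\setminus k_i$, hence of $k'_{\mu(i)}$ onto $k_i$; this forces $a_{i,j}\in k_i\cap k'_{\mu(i)}$ and $k_i=k'_{\mu(i)}$, and it rules out $r_i=1$ by the same argument with $1,1/x,1/y$. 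Combined with $d_i=d'_{\mu(i)}$, this is exactly the assertion of the theorem after renumbering the factors of $\Y$ by $\mu$.

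The work here is essentially bookkeeping; the only point requiring care is the first paragraph---confirming that none of Sections~6--10 tacitly uses that the source and target of $\phi$ coincide, the one substantive change being the enlargement of the auxiliary field $k$ so that $\B$ and $\B'$ live in a common building $\B(k)$. Should the step ``every $t\in K\setminus k'_{\mu(i)}$ is a coordinate value at some point'' be delicate for small $K$, one first base-changes $\phi$ to the completed algebraic closure of $K$; this affects neither the subfields $k_i,k'_j$ nor the invariants $r,d_i$.
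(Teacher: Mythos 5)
Your plan is workable, but it is a genuinely different (and much heavier) route than the paper's. The paper proves Theorem \ref{S} by a two-line reduction: given an isomorphism $\phi\colon\X\to\Y$, form the product $\X\times\Y=\prod_{i=1}^r\Omega^{d_i}_{k_i,K}\times\prod_{j=1}^s\Omega^{d'_j}_{k'_j,K}$, which is again a space of the kind covered by Theorem \ref{T}, and consider the automorphism $\psi(x,y)=(\phi^{-1}(y),\phi(x))$. By Theorem \ref{T}, $\psi$ is a permutation of the $r+s$ factors preserving fields and dimensions, composed with elements of the projective linear groups; since the first $r$ output coordinates of $\psi$ depend only on the $\Y$-coordinates and the last $s$ only on the $\X$-coordinates, that permutation must interchange the two blocks, which forces $r=s$ and produces the required matching $k_i=k'_{\sigma(i)}$, $d_i=d'_{\sigma(i)}$. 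Your approach --- re-running the proof of Theorem \ref{T} with distinct source and target, enlarging the compositum $k$ to contain all of $k_1,\dots,k_r,k'_1,\dots,k'_s$, and invoking Lemma \ref{graphaut}(1) in both directions to match the factors --- should indeed go through, and it has the side benefit of yielding the explicit form of every isomorphism $\X\to\Y$ rather than only the discrete invariants; but it requires exactly the re-audit of Sections 6--10 that you flag as the delicate point, and the swap trick makes all of that unnecessary (and even recovers the explicit form of $\phi$, since Theorem \ref{T} applied to $\psi$ describes each output coordinate of $\phi$ as a $PGL$-transform of a single input coordinate). When a classification theorem for automorphisms is already in hand, it is worth looking for this kind of doubling argument before redoing the classification in a relative setting.
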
 \begin{proof} Let $\phi: \X \rightarrow \Y$ be an isomorphism. We define an automorphism $\psi$ of $\X \times \Y$ by:
$$ \psi(x,y)=(\phi^{-1}(y), \phi(x)). $$ The theorem now follows immediately from theorem \ref{T}. \end{proof}

\begin{proof}[Proof of theorem \ref{R}] Let $\X_1= \prod_{i=1}^r\Omega_{k_i,K}^{d_i}$ and $\X_2=\prod_{i=1}^s\Omega_{k'_i,K}^{d'_i}$. 
For $i=1,2$, since $\Gamma_i$ is discrete and torsion free, its action on $\X_i$ is discrete and free. This follows from the proof of lemma 6 in \cite{ber} (the proof in \cite{ber} is for the case of a single Drinfeld space, but it holds for products as well, since we still have a proper map from $\X_i$ to the corresponding building, and the stabilizer of a vertex of the building in $\Aut(\X_i)$ is compact). By lemma 4 in \cite{ber}, the quotient $\Gamma_i \setminus \X_i$ exists and the map $p_i:\X_i \rightarrow  \Gamma_i \setminus \X_i$ is an analytic covering map. By \cite{ber}, Theorem 6.1.5, $\X_i$ is contractible. Hence $\X_i$ is the universal covering space of $\Gamma_i \setminus \X_i$. Now, since $\Gamma_1 \setminus \X_1 \cong \Gamma_2 \setminus \X_2$, we have $\X_1 \cong \X_2$. By theorem \ref{S}, we may assume without loss of generality that $\X_1=\X_2$. By lemma 7 in \cite{ber}, the isomorphism from  $\Gamma_1 \setminus \X_1$ to  $\Gamma_2 \setminus \X_2$ is induced by an automorphism $\psi: \X_1 \rightarrow \X_1$. 
For every $\gamma_1 \in \Gamma_1$ and $x\in \X_1$, we have $p_2(\psi \gamma_1 x )=p_2(\psi x )$, hence there exists $\gamma_2 \in \Gamma_2$ such that $\psi \gamma_1 x = \gamma_2 \psi x$. Since the action of $\Gamma_2$ on $\X_1$ is discrete and free, there exists a neighbourhood $U$ of $p_2(x)$, such that $p_2^{-1}(U)$ is a disjoint union of copies of $U$ on which $\Gamma_2$ acts freely. Hence, for $y$ close enough to $x$, we also have $\psi \gamma_1 y = \gamma_2 \psi y$. By proposition \ref{prop:uniq}, $\psi \gamma_1 = \gamma_2 \psi$. This shows that $\Gamma_2= \psi \Gamma_1 \psi ^{-1}$. Using theorem \ref{T}, we may write explicitely the automorphism $\psi$, and get the desired result.\end{proof}

Finally, let us find the automorphisms of $\X$ in the broader category ${\mathcal An}_l$ (see \cite{ber4}, p.30). Recall that the objects of this category are pairs $(L,X)$ where $L$ is a non-Archimedean field over $l$ and $X$ is an $L$-analytic space, and a morphism from $(L_1,X)$ to $(L_2,Y)$ consists of an isometric embedding from $L_2$ to $L_1$ over $l$, and an $L_1$-analytic morphism from $X$ to $Y \hat{\otimes}_{L_2}L_1$. The composition of two morphisms $(\sigma, \phi): (L_1,X_1)\rightarrow (L_2,X_2)$ and $(\tau,\psi): (L_2,X_2)\rightarrow (L_3,X_3)$ is $(\sigma \tau, (\psi \hat{\otimes}_{L_2}L_1) \circ \phi)$. 

\begin{theorem} Let $K,l,r, k_1,..,k_r, d_1,..,d_r$ be as in the introduction. Let $(\sigma, \phi)$ be an automorphism of $\X = \prod_{i=1}^r\Omega_{k_i,K}^{d_i}$ in the category ${\mathcal An}_l$. Then, there exists a permutation $\mu \in S_r$ and $g_i \in PGL_{d_i+1}(k_i)$ for $i=1,..,r$, such that $d_{\mu(i)}=d_i$ and $\sigma(k_i)=k_{\mu(i)}$ for all $i$, and $\phi: \X \rightarrow \X \hat{\otimes}_{K,\sigma}K$ is given by $\phi = \mu \circ (g_1,..,g_r)$, where the action of $\mu$ and $(g_1,..,g_r)$ is as in theorem \ref{T}.\end{theorem}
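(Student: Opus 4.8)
The plan is to reduce the statement to Theorem~\ref{T} by converting the relative automorphism $(\sigma,\phi)$ into an ordinary $K$-analytic automorphism of a doubled product of Drinfeld spaces. Since $(\sigma,\phi)$ is invertible in ${\mathcal An}_l$, the map $\sigma$ must be an isometric automorphism of $K$ over $l$, and $\phi$ is a $K$-analytic isomorphism $\X \xrightarrow{\sim} \X\hat\otimes_{K,\sigma}K$. So the first task is to recognize the target $\X\hat\otimes_{K,\sigma}K$ as a product of Drinfeld spaces of the kind handled by Theorem~\ref{T}.

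For this I would work factor by factor in the seminorm description of Section~\ref{sec:DrSp}. Base change along $\sigma$ carries $\PR^{d}_K$ to $\PR^{d}_K$, but it acts on the coefficients of a $K$-linear form through $\sigma$; hence it carries the $k$-rational hyperplanes $H_a$ to the $\sigma(k)$-rational ones, and so $\Omega^{d}_{k,K}\hat\otimes_{K,\sigma}K \cong \Omega^{d}_{\sigma(k),K}$, with the precise direction of the $\sigma$-twist read off from the coordinate formula. Because $\sigma$ fixes $l$ and is isometric, each $\sigma(k_i)$ is again a finite extension of $l$ contained in $K$, so $\X_\sigma := \X\hat\otimes_{K,\sigma}K \cong \prod_{i=1}^{r}\Omega^{d_i}_{\sigma(k_i),K}$ is again of the type covered by Theorem~\ref{T}. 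Making this identification compatibly with the $PGL$-actions is the step I expect to require the most care, since it is what fixes the exact sense in which the permutation and the elements $g_i$ act in the final formula.

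With this in hand the argument follows the proof of Theorem~\ref{S}: I would form the $K$-analytic automorphism $\psi$ of $\X\times\X_\sigma$ defined by $\psi(x,y)=(\phi^{-1}(y),\phi(x))$, an involution interchanging the two blocks of $r$ factors. The space $\X\times\X_\sigma$ is a product of $2r$ Drinfeld spaces over the fields $k_1,\dots,k_r,\sigma(k_1),\dots,\sigma(k_r)$, all finite over $l$ and contained in $K$, so Theorem~\ref{T} describes $\Aut_K(\X\times\X_\sigma)$ explicitly: $\psi$ lies in $M'\ltimes\big(\prod_i PGL_{d_i+1}(k_i)\times\prod_i PGL_{d_i+1}(\sigma(k_i))\big)$, with $M'$ the group of factor permutations preserving field and dimension. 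Since the permutation part of $\psi$ must carry the first block bijectively onto the second while preserving fields and dimensions, it is given by a permutation $\mu$ of $\{1,\dots,r\}$ with $d_{\mu(i)}=d_i$ and $\sigma(k_i)=k_{\mu(i)}$ (this also recovers the field matching of Theorem~\ref{S}); restricting the $PGL$-part of $\psi$ to the first block and using the identification of Step~1 then exhibits $\phi$ as the composite of an element $(g_1,\dots,g_r)\in\prod_i PGL_{d_i+1}(k_i)$ acting on $\X$ with the permutation isomorphism $\mu: \X\to\X_\sigma=\X\hat\otimes_{K,\sigma}K$, i.e. $\phi=\mu\circ(g_1,\dots,g_r)$ as claimed. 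The only genuinely new input beyond Theorem~\ref{T} is the base-change bookkeeping of Step~1; Steps~2 and~3 are the same manipulation already used for Theorems~\ref{S} and~\ref{R}.
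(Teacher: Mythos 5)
Your proposal is correct and follows essentially the same route as the paper: identify $\X\hat\otimes_{K,\sigma}K$ with $\prod_i\Omega^{d_i}_{\sigma(k_i),K}$ and then apply Theorem \ref{T} via the doubling trick $\psi(x,y)=(\phi^{-1}(y),\phi(x))$, which is exactly how the paper proves Theorem \ref{S}, the result it cites here. The only difference is that you inline the proof of Theorem \ref{S} rather than citing it.
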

\begin{proof}We have $\X \hat{\otimes}_{K,\sigma}K = \prod_{i=1}^r\Omega_{\sigma(k_i),K}^{d_i}$. Since $\phi$ is an isomorphism of $K$-analytic spaces, the theorem follows from theorems \ref{S} and \ref{T}. \end{proof}

%\begin{acknowledgements} 
\textbf{Acknowledgements.} The author would like to thank Professor V. Berkovich for suggesting the problem to him, and for valuable discussions and advice. 
%\end{acknowledgements}

\fussy

\end{document}